\newtheorem{thm}{Theorem}[section]
 \newtheorem{cor}[thm]{Corollary}
 \newtheorem{lem}[thm]{Lemma}
 \newtheorem{prop}[thm]{Proposition}
 \theoremstyle{definition}
 \newtheorem{df}[thm]{Definition}
 \theoremstyle{remark}
 \newtheorem{rem}[thm]{Remark}
 \numberwithin{equation}{section}
\def\be#1 {\begin{equation} \label{#1}}
\newcommand{\ee}{\end{equation}}
\renewcommand{\phi}{\varphi}
\def\C{\mathbb C}
\def\R{\mathbb R}
\def\T{\mathbb T}
\def\Z{\mathbb Z}
\def\HH{\mathbb H}
\def\N{\mathbb N}
\def\E{\mathcal E}
\def\W{\mathcal W}
\def\M{\mathcal M}
\def\e{e}
\def\TT{\mathcal T}
\def\pa{\partial}
\def\eps{\epsilon}
\def\dis{\displaystyle}    
\definecolor{gr}{rgb}   {0.,   0.69,   0.23 }
\definecolor{bl}{rgb}   {0.,   0.5,   1. }
\definecolor{mg}{rgb}   {0.85,  0.,    0.85}
\definecolor{yl}{rgb}   {0.8,  0.7,   0.}
\definecolor{or}{rgb}  {0.7,0.2,0.2}
\renewcommand{\Re}{  {\mathfrak{Re}}  }
\newcommand{\ov}{  \overline  }
\newcommand\<{\langle}
\renewcommand\>{\rangle}
\begin{document}

\thanks{P. G\'erard is  supported by the grant ``ANAE'' ANR-13-BS01-0010-03. }
\thanks{P. Germain is  supported by the NSF grant DMS-1501019}
\thanks{L. Thomann is supported by the grants ``BEKAM''  ANR-15-CE40-0001,  "ISDEEC'' ANR-16-CE40-0013 and by the ERC Project FAnFAre no. 637510}

\author{Patrick G\'erard}
\address{Laboratoire de Math\'ematiques d'Orsay,
Univ. Paris-Sud, CNRS, Universit\'e Paris--Saclay, 91405 Orsay, France}
\email{Patrick.Gerard@math.u-psud.fr}
\author{Pierre Germain}
\address{Courant Institute of Mathematical Sciences, 251 Mercer Street, New York 10012-1185 NY, USA}
\email{pgermain@cims.nyu.edu}
\author{ Laurent Thomann }
\address{Institut  \'Elie Cartan, Universit\'e de Lorraine, B.P. 70239,
F-54506 Vand\oe uvre-l\`es-Nancy Cedex, FR}
\email{laurent.thomann@univ-lorraine.fr}

\title{On the Cubic Lowest Landau Level Equation}

\subjclass[2000]{35Q55 ; 37K05 ; 35C07 ; 35B08}

\keywords{Nonlinear Schr\"odinger equation,  Lowest Landau Level, stationary solutions}

\begin{abstract}
We study dynamical properties of the  cubic lowest Landau level  equation, which is used in the modeling of fast rotating Bose-Einstein condensates. We  obtain bounds on the decay of general stationary solutions. We then provide a classification of stationary waves with a finite number of zeros.  Finally, we are able to establish which of these stationary waves are stable, through a variational analysis.
\end{abstract}

\maketitle

\tableofcontents

\section{Introduction}

\subsection{The cubic lowest Landau level equation}

Consider, in dimension 2, the magnetic Schr\"odinger operator corresponding to a vertical magnetic field
$$
\Delta_A = \nabla_A \cdot \nabla_A, \qquad \mbox{with} \qquad \nabla_A =  \nabla-iA \quad \mbox{and} \quad  A = \left( \begin{array}{c} -y \\ x \end{array} \right).
$$
From the identity
$$\langle -\Delta _A\psi ,\psi \rangle _{L^2}=2\Vert \psi \Vert _{L^2}^2+\Vert (2\ov{\partial _z}+z)\psi \Vert_{L^2}^2\ , \quad z=x+iy \ ,$$ 
the ground state of $-\Delta_A$  is very degenerate: it consists of the Bargmann-Fock space
$$
\mathcal{E} =\big \{\, u(z) = e^{-\frac{|z|^2}{2}} f(z)\;,\;f \; \mbox{entire\ holomorphic}\,\big\}\cap L^2(\C ),
$$
also called lowest Landau level.

The orthogonal projection on $\mathcal{E}$ is given by the formula (see Paragraph \ref{para12} below) 
\begin{equation}\label{defpi}
[\Pi u](z) = \frac{1}{\pi} e^{-\frac{|z|^2}{2}} \int_\mathbb{C} e^{\ov  w z - \frac{|w|^2}{2}} u(w) \,dL(w),
\end{equation}
where $L$ stands for Lebesgue measure on $\C$.

The cubic lowest Landau level equation is induced by the energy 
$$\displaystyle \mathcal{H}(u) = \frac{1}{4} \int_{\mathbb C} |u|^4 \, dL$$
 given the standard symplectic form $\omega(u,v) =  \mathfrak{Im} \int_{\mathbb{C}} u\overline{v}\,dL$ on $\mathcal{E}$. It reads
\begin{equation}\label{LLL}\tag{LLL}
\left\{
\begin{aligned}
&i\partial_{t}u= \Pi (|u|^2 u), \quad   (t,z)\in \R\times \C,\\
&u(0,z)=  u_0(z).
\end{aligned}
\right.
\end{equation}

\subsection{Derivation of~\eqref{LLL}} This equation arises as a limiting problem in a number of situations.

\subsubsection{Rotating Bose-Einstein condensates} Following~\cite{ABD, Ho}, consider a Bose-Einstein condensate confined by a harmonic field, and rotating at a high velocity. In appropriate coordinates, and for constants $\epsilon$ and $G$, its Hamiltonian reads
$$
\int_\C \Big[ - \big|[\nabla - iA] \psi\big|^2 + \epsilon^2 |z|^2 |\psi|^2 + G |\psi|^4 \Big]\,dL(z)
$$
For $\epsilon, G \ll 1$, the first term is dominant, and, in order to minimize $H$, one can consider that $\psi \in \mathcal{E}$. This leaves us with the Hamiltonian 
$$
\int_\C \big[ \epsilon^2 |z|^2 |\psi|^2 + G |\psi|^4 \big]\,dL(z),
$$
on $\mathcal{E}$. Notice that the corresponding dynamics are the same as that given by $\mathcal{H}$, which is the case $\epsilon = 0$ (we will see later that the term $|z|^2 |\psi|^2$ simply corresponds to rotations, and is therefore harmless).

It is conjectured from physical~\cite{ARVK,BSSD} and numerical~\cite{AB} observations that, as $\epsilon \to 0$, the minimizers (for fixed $L^2$ norm) of the above functional have a very specific structure: within a ball, $u$ is close to a theta function (in particular, its zeros  coincide with an Abrikosov lattice); and away from this ball it decays fast. See~\cite{ABN} for a mathematical approach to this conjecture. 

\subsubsection{Superconductivity} A parallel derivation can be followed for a superconducting material submitted to an exterior magnetic potential: we refer to~\cite{AftaSerfa}.

\subsubsection{Resonant system for a confined nonlinear Schr\"odinger equation} Start this time with the weakly nonlinear Schr\"odinger equation
$$
i \partial_t u - H_0 u = \epsilon^2 |u|^2 u \quad \mbox{with} \quad H_0 = - \Delta + |x|^2.
$$
The completely resonant system, which approximates the evolution of $u$ as $\epsilon \to 0$ is given (after time rescaling) by
\begin{equation}
\label{CR} \tag{CR}
i\partial_t u = \mathcal{T}(u,u,u) \quad \mbox{with} \quad \mathcal{T}(f,f,f) = \int_0^{2\pi} |e^{isH_0} f|^2 e^{isH_0} f\,ds.
\end{equation}
It is derived and studied in~\cite{GHT1,GHT2}. A striking property of~\eqref{CR} is that it agrees with~\eqref{LLL} if its data are chosen in the Bargmann-Fock space $\mathcal{E}$.

The equation~\eqref{CR} can also be derived from the nonlinear Schr\"odinger equation on the torus~\cite{FGH} or in the presence of a magnetic potential~\cite{FMS}. 

\subsection{Comparison with similar equations}
The formulation \eqref{LLL} of the cubic lowest Landau level equation is similar to the cubic Szeg\H{o} equation, introduced by the first author and S. Grellier in \cite{GG1}, and identified in \cite{GG2} as the completely resonant system associated  to the cubic half--wave equation on the circle --- see also Pocovnicu \cite{P1, P2} concerning the cubic Szeg\H{o} equation on the line. An important feature of the cubic Szeg\H{o} equation is that it admits integrability properties through a Lax pair structure satisfied by Hankel operators. Using this structure, traveling wave solutions were classified in \cite{GG1} for the circle and in \cite{P1} for the line, and growth of high Sobolev norms was established in \cite{P2} for the line and  in \cite{GG3} for the circle. 

Though the Lax pair structure for Hankel operators does not seem to extend to \eqref{LLL}, it is therefore natural to study similar questions for equation \eqref{LLL}. A review of our results in this direction is the purpose of the next paragraph. 

Finally, let us mention that the completely resonant system associated to the conformally invariant cubic wave equation on the three--dimensional sphere was recently introduced in \cite{BCEHLM}. 

\subsection{Main results}
In this paragraph, we briefly describe the main results of this paper. We recall that
$$
\mathcal{E} =\big \{\, u(z) = e^{-\frac{|z|^2}{2}} f(z)\;,\;f \; \mbox{entire\ holomorphic}\,\big\}\cap L^2(\C ).
$$
\subsubsection{The initial value problem and long time Sobolev bounds}

The well-posedness of \eqref{LLL} was studied by F. Nier~\cite[Proposition 3.1]{Nier} (see Remark~\ref{eqNier}), and the following statement holds true.
\begin{thm}\label{mainCauchy}
For every $u_0\in \mathcal{E}$, there exists a unique solution $u\in \mathcal{C}^\infty (\R ,\mathcal{E})$ to equation \eqref{LLL}, and this solution depends smoothly on $u_0$. Moreover, for every  $t\in \R$ 
$$\int_\C |u(t,z)|^2\, dL(z)=\int_\C |u_0(z)|^2\, dL(z).$$
 Furthermore, if moreover $zu_0\in L^2(\C )$, then  $zu(t)\in L^2(\C)$ for every $t\in \R$, and 
$$\int_\C |z|^2|u(t,z)|^2\, dL(z)=\int_\C |z|^2|u_0(z)|^2\, dL(z)\ ,\ \int_\C z|u(t,z)|^2\, dL(z)=\int_\C z|u_0(z)|^2\, dL(z)\ .$$
More generally, if, for some $s>0$, $\langle z\rangle ^s u_0\in L^2(\C)$, then  $\langle z\rangle ^su(t)\in L^2(\C)$ for every $t\in \R$.
\end{thm}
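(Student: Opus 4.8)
The plan is to read \eqref{LLL} as the Banach-space ODE $\dot u=F(u):=-i\Pi(|u|^2u)$ on $\mathcal E$ endowed with the $L^2$-norm, and to run a Cauchy--Lipschitz (Picard) argument. The device that makes this elementary is the explicit kernel in \eqref{defpi}: since $\mathfrak{Re}(\ov w z)-\tfrac12(|z|^2+|w|^2)=-\tfrac12|z-w|^2$, its modulus is the Gaussian $\tfrac1\pi e^{-|z-w|^2/2}$. As every $u\in\mathcal E$ obeys $u=\Pi u$, convolution with this Gaussian and Young's inequality give the embeddings $\|u\|_{L^p}\lesssim\|u\|_{L^2}$ for all $2\le p\le\infty$. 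In particular $|u|^2u\in L^2$, so $F$ maps $\mathcal E$ into $\mathcal E$, and from $\big||u|^2u-|v|^2v\big|\lesssim(|u|^2+|v|^2)|u-v|$ together with $\|\Pi\|_{L^2\to L^2}=1$ one obtains $\|F(u)-F(v)\|_{L^2}\lesssim(\|u\|_{L^2}^2+\|v\|_{L^2}^2)\|u-v\|_{L^2}$. Thus $F$ is polynomial, hence smooth; Cauchy--Lipschitz yields a unique maximal solution depending smoothly on $u_0$, and differentiating the equation repeatedly promotes it to $\mathcal C^\infty(\mathbb R,\mathcal E)$.

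Mass conservation is then immediate: $\tfrac{d}{dt}\|u\|_{L^2}^2=2\mathfrak{Re}\langle F(u),u\rangle=2\mathfrak{Re}\big(-i\langle|u|^2u,\Pi u\rangle\big)=2\mathfrak{Re}\big(-i\int_{\mathbb C}|u|^4\big)=0$, using $\Pi^*=\Pi$ and $\Pi u=u$. Since the local time furnished by the Lipschitz bound depends only on $\|u_0\|_{L^2}$, this rules out blow-up and the flow is global.

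For the weighted statements I would establish \emph{propagation} of $\langle z\rangle^su\in L^2$ first, and only afterwards the exact identities. The key ingredient is a commutator estimate: the operator $[\langle z\rangle^s,\Pi]$ has kernel $(\langle z\rangle^s-\langle w\rangle^s)K(z,w)$, and combining the Gaussian decay of $K$ with $|\langle z\rangle^s-\langle w\rangle^s|\lesssim(\langle z\rangle^{s-1}+\langle w\rangle^{s-1})|z-w|$ (for $s\ge1$; one has $\lesssim|z-w|^s$ when $0<s<1$) yields $\|[\langle z\rangle^s,\Pi]g\|_{L^2}\lesssim\|\langle z\rangle^{s-1}g\|_{L^2}$. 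I would then run the energy identity for $\langle z\rangle^su$. Writing $i\partial_t(\langle z\rangle^su)=\Pi(\langle z\rangle^s|u|^2u)+[\langle z\rangle^s,\Pi](|u|^2u)$ and pairing with $\langle z\rangle^su$, the leading contribution is $\int_{\mathbb C}\langle z\rangle^{2s}|u|^4\in\mathbb R$, which disappears under $\mathfrak{Im}$; the two remaining commutator terms are each controlled, via $\|u\|_{L^\infty}\lesssim\|u\|_{L^2}$ and mass conservation, by $C\|u_0\|_{L^2}^2\|\langle z\rangle^su\|_{L^2}^2$. Gr\"onwall's lemma then gives $\|\langle z\rangle^su(t)\|_{L^2}\le e^{C\|u_0\|_{L^2}^2|t|}\|\langle z\rangle^su_0\|_{L^2}$ for every $s>0$; the statement about $zu_0$ is the case $s=1$.

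Once $zu(t)\in L^2$ is secured, the two exact conservation laws are cleanest to read off from symmetries. Both $\mathcal H=\tfrac14\int_{\mathbb C}|u|^4$ and $\Pi$ commute with the rotations $u(z)\mapsto u(e^{i\theta}z)$ and with the magnetic translations $(\widetilde D(h)u)(z)=e^{i\,\mathfrak{Im}(hz)}u(z-\ov h)$, each of which is unitary on $L^2$ and preserves $\mathcal E$; hence these are one-parameter symmetry groups of \eqref{LLL}. Their infinitesimal generators are the number operator $\mathcal N$, for which $\int_{\mathbb C}|z|^2|u|^2=\|u\|_{L^2}^2+\langle\mathcal Nu,u\rangle$, and multiplication by $z$ together with its adjoint; Noether's identity $\{Q,\mathcal H\}=0$ therefore yields conservation of $\langle\mathcal Nu,u\rangle$ and of $\int_{\mathbb C}z|u|^2$, i.e. of $\int_{\mathbb C}|z|^2|u|^2$ and $\int_{\mathbb C}z|u|^2$ (these can equally be verified by direct differentiation, the real leading term dropping and the cross terms cancelling). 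I expect the genuine obstacle to lie in the weighted analysis: proving the commutator bound for $\Pi$ rigorously from the Gaussian kernel, and---more delicately---justifying the energy identity \emph{before} $\langle z\rangle^su(t)\in L^2$ is known. The latter forces a regularization of the weight (e.g. replacing $\langle z\rangle^s$ by $\min(\langle z\rangle^s,R)$, or by $\langle z\rangle^se^{-\epsilon|z|^2}$), bounds uniform in the truncation parameter, and a limiting argument.
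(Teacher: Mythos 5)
Your proposal is correct and follows essentially the same route as the paper: the Gaussian bound on the kernel of $\Pi$ yields the $L^2\to L^p$ embeddings and the boundedness of $\Pi$ on weighted spaces, local well-posedness comes from Cauchy--Lipschitz for a smooth vector field, globalization from conservation of mass, propagation of weighted norms from an energy/Gr\"onwall estimate, and the remaining conservation laws from the Hamiltonian symmetries. The only cosmetic differences are that you package the weighted estimate as a commutator bound for $[\langle z\rangle^s,\Pi]$ where the paper states it as boundedness of $\Pi$ on $L^{2,\alpha}$ together with local well-posedness there (its Theorem \ref{bounds} even upgrades your exponential bound to a polynomial one, but that sharper rate is not needed for this statement), and that you close the global argument with the mass alone where the paper also invokes the conserved $L^4$ norm.
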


Our next result concerns the long time bounds for Sobolev norms, which are equivalent to weighted norms $\Vert \langle z\rangle ^ku\Vert _{L^2}$ --- see Lemma \ref{lemEq} below.
\begin{thm}\label{bounds}
With the notation of Theorem \ref{mainCauchy}, assume $\langle z\rangle ^k u_0\in L^2(\C)$ for some integer $k\geq 1$. Then
$$\Vert \langle z\rangle ^ku(t)\Vert _{L^2(\C)}\leq C_k(1+|t|)^{\frac{k-1}{2}}\ .$$
\end{thm}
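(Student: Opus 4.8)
The plan is to control the growth of the weighted norm $\Vert\langle z\rangle^k u(t)\Vert_{L^2}$ by deriving a differential inequality for its time derivative. Since the conservation laws from Theorem~\ref{mainCauchy} already handle $k=1$ (the quantities $\int|u|^2$, $\int|z|^2|u|^2$ are conserved, giving a \emph{bounded} $\langle z\rangle^1$ norm), the growth is genuinely a higher-$k$ phenomenon, and the exponent $\frac{k-1}{2}$ suggests that one loses roughly a half power of $\langle z\rangle$ per differentiation of the norm in time. Concretely, I would set $N_k(t) = \Vert\langle z\rangle^k u(t)\Vert_{L^2}^2$ and compute $\frac{d}{dt}N_k$ by differentiating under the integral sign and using the equation $i\partial_t u = \Pi(|u|^2 u)$. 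The commutator of multiplication by $\langle z\rangle^{2k}$ with the nonlinearity, together with the self-adjointness of $\Pi$ on $\mathcal{E}$, will be the engine producing the estimate.

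\emph{Key steps.} First I would write
\begin{equation*}
\frac{d}{dt}N_k(t) = 2\Re\int_{\C}\langle z\rangle^{2k}\,u\,\overline{\partial_t u}\,dL = -2\Im\int_{\C}\langle z\rangle^{2k}\,u\,\overline{\Pi(|u|^2 u)}\,dL.
\end{equation*}
Because $u\in\mathcal{E}$ and $\Pi$ is the orthogonal projection, $\int\langle z\rangle^{2k}u\,\overline{\Pi(|u|^2u)} = \int \Pi(\langle z\rangle^{2k}u)\,\overline{|u|^2u}$; the point is that $\Pi$ does not commute with multiplication by $\langle z\rangle^{2k}$, so I would write $\Pi(\langle z\rangle^{2k}u) = \langle z\rangle^{2k}u + [\Pi,\langle z\rangle^{2k}]u$ and observe that the first term contributes a real integral (hence vanishes after taking $\Im$). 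Everything therefore reduces to estimating the commutator term $\Im\int [\Pi,\langle z\rangle^{2k}]u\,\overline{|u|^2u}\,dL$. The heart of the matter is a commutator bound showing that $[\Pi,\langle z\rangle^{2k}]$ gains derivatives/weights in a favorable way: on the Bargmann–Fock space, multiplication and the projection interact through the reproducing kernel $e^{\bar w z - |w|^2/2}$, and differentiating the kernel in $z$ effectively trades a factor of $z$ for a $\bar w$, i.e. the commutator is lower order by one power of $\langle z\rangle$ compared to $\langle z\rangle^{2k}$ itself.

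\emph{Closing the estimate.} Using such a commutator bound together with the pointwise Bargmann–Fock decay and interpolation between weighted $L^2$ norms, I expect to obtain an inequality of the form
\begin{equation*}
\Big|\frac{d}{dt}N_k(t)\Big| \;\leq\; C\,\Vert\langle z\rangle^{k}u\Vert_{L^2}\,\Vert\langle z\rangle^{k-1}u\Vert_{L^2}\,\Vert u\Vert_{L^\infty}^2 \;\leq\; C\,N_k(t)^{1/2}\,N_{k-1}(t)^{1/2},
\end{equation*}
where the extra factors are controlled by the conserved $L^2$ norm and the boundedness of the Bargmann–Fock functions (since $\Vert u\Vert_{L^\infty}\lesssim\Vert u\Vert_{L^2}$ on $\mathcal{E}$). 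This gives $\frac{d}{dt}N_k^{1/2} \lesssim N_{k-1}^{1/2}$, and then an induction on $k$ closes the argument: the base case $k=1$ is bounded by conservation, and if $N_{k-1}^{1/2}(t)\lesssim (1+|t|)^{\frac{k-2}{2}}$ then integrating in time yields $N_k^{1/2}(t)\lesssim (1+|t|)^{\frac{k-1}{2}}$, which is precisely the claimed bound.

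\emph{Main obstacle.} The delicate point is the commutator estimate for $[\Pi,\langle z\rangle^{2k}]$ and verifying that it genuinely gains a full power of $\langle z\rangle$; a naive bound would only save half a power and produce a weaker exponent. Making the gain precise likely requires working with the explicit kernel in~\eqref{defpi}, or equivalently expanding $u$ in the Hermite/Hermite-type basis of $\mathcal{E}$ (special Hermite functions), where multiplication by $z$ and $\bar z$ act as weighted shift operators. In that basis the projection $\Pi$ and the weight $\langle z\rangle^{2k}$ become explicit band operators, and the commutator structure — together with the fact that the nonlinearity $\Pi(|u|^2u)$ keeps us inside $\mathcal{E}$ — should make the half-power-per-step mechanism transparent.
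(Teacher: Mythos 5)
Your opening moves are sound (differentiating $N_k(t)=\Vert\langle z\rangle^k u(t)\Vert_{L^2}^2$, using the self-adjointness of $\Pi$, and controlling the result by conserved quantities and the $L^\infty$--$L^2$ bound on $\mathcal E$), but the argument as written does not prove the stated exponent, for two related reasons. First, the induction arithmetic at the end is wrong: from $\frac{d}{dt}N_k^{1/2}\lesssim N_{k-1}^{1/2}\lesssim (1+|t|)^{\frac{k-2}{2}}$, integration in time gives $N_k^{1/2}\lesssim (1+|t|)^{\frac{k-2}{2}+1}=(1+|t|)^{\frac{k}{2}}$, \emph{not} $(1+|t|)^{\frac{k-1}{2}}$; iterating from the base case $N_1^{1/2}\lesssim 1$ yields $N_k^{1/2}\lesssim (1+|t|)^{k-1}$, i.e.\ twice the claimed exponent. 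Second, and this is the root cause, your target differential inequality $|\dot N_k|\lesssim N_k^{1/2}N_{k-1}^{1/2}$ is genuinely too weak: since $N_k\geq N_{k-1}$, it is weaker than $|\dot N_k|\lesssim N_{k-1}$, and even combined with the interpolation $N_{k-1}\leq N_1^{1/(k-1)}N_k^{(k-2)/(k-1)}$ it only gives $\dot N_k\lesssim N_k^{1-\frac{1}{2(k-1)}}$, hence again $N_k^{1/2}\lesssim(1+|t|)^{k-1}$. The loss comes from your distribution of the weight: by splitting $\langle z\rangle^{2k}$ as $\langle z\rangle^{k}\cdot\langle z\rangle^{k}$ (with the commutator gaining one power on one factor) you keep a full factor $N_k^{1/2}$ on the right-hand side, and no commutator refinement can remove it.

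What is needed — and what the paper does — is a differential inequality with \emph{no} $N_k$ on the right. Write
$$\frac{d}{dt}\int|z|^{2k}|u|^2\,dL=2\Im\int z|z|^{2(k-1)}\,\overline{zu}\;\Pi(|u|^2u)\,dL\lesssim \Vert zu\Vert_{L^2}\,\bigl\Vert |z|^{2k-1}\Pi(|u|^2u)\bigr\Vert_{L^2},$$
so that only one power of $z$ is paired with $\overline u$, producing the \emph{conserved} quantity $\Vert zu\Vert_{L^2}$, while all $2k-1$ remaining powers ride on the cubic term. The boundedness of $\Pi$ on weighted $L^2$ (Proposition \ref{prop22}) lets one drop $\Pi$, and then $|z|^{2k-1}|u|^3\leq|zu|\,|z^{k-1}u|^2$ together with Carlen's inequality \eqref{hypercontractivity} applied to $z^{k-1}u\in\tilde{\mathcal E}$ gives $\bigl\Vert|z|^{2k-1}|u|^2u\bigr\Vert_{L^2}\lesssim\Vert zu\Vert_{L^2}\Vert z^{k-1}u\Vert_{L^2}^2+\Vert u\Vert_{L^2}^3$. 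The upshot is $\dot N_k\lesssim N_{k-1}+1\lesssim N_k^{1-\frac{1}{k-1}}$ by interpolation against the bounded $N_1$, which integrates to $N_k\lesssim(1+|t|)^{k-1}$ and hence the claimed $(1+|t|)^{\frac{k-1}{2}}$ for $N_k^{1/2}$. If you prefer your induction format, the correct scheme is to induct on $N_k$ (not $N_k^{1/2}$) with the hypothesis $N_{k-1}\lesssim(1+|t|)^{k-2}$ and the inequality $\dot N_k\lesssim N_{k-1}$. Your proposed commutator route also has a secondary bookkeeping issue: after moving $\Pi$ onto $\langle z\rangle^{2k}u$, the commutator term must be paired against $\overline{|u|^2u}$ with the excess weight redistributed onto the cubic factor, which is exactly where the paper's splitting $|z|^{2k-1}=|z|\cdot|z|^{2(k-1)}$ does the work.
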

Notice that Theorem \ref{bounds} is in strong contrast with the results of \cite{GG3} for the cubic Szeg\H{o} equation on the circle, where superpolynomial growth of Sobolev norms is established to be generic in the Baire sense.
On the other hand, we mention in this paper two results improving the above polynomial rate for a perturbation of \eqref{LLL} under generic Hermite multipliers. Theorem \ref{thFN} is a direct consequence of normal form theory for semilinear quantum harmonic oscillators \cite{GIP} and states that, for any exponent $r$ and for a full measure set of Hermite multipliers of any given algebraic decay, solutions having an initial data of order $\epsilon $ in a big Sobolev space conserve the same size on a time of length $\epsilon ^{-r}$. Theorem \ref{KAMLLL} is a direct application of KAM theory \cite{GT} to this context and allows to find small quasiperiodic solutions --- hence uniformly small in any Sobolev space ---  for the perturbation of \eqref{LLL} by a subset of Hermite multipliers of asymptotically full measure.

\subsubsection{Stationary waves}
In view of the two dimensional invariance by phase rotations and space rotations, it is natural to define stationary waves for equation \eqref{LLL} as solutions of the form
$$u(t,z)=e^{-i\lambda t}u_0\left (e^{-i\mu t}\right )\, ,$$
for some $(\lambda ,\mu )\in \R^2$. Equivalently, the corresponding initial condition $u_0$, also called a stationary wave, satisfies 
\begin{equation}\label{eq:statwave}
\lambda u_0+\mu \Lambda u=\Pi (|u|^2u)\  ,\ \Lambda :=z\pa _z-\overline z\overline {\pa _z} .
\end{equation}
We obtain several results about these special solutions. Firstly, we provide a priori bounds on the growth at infinity of any stationary wave.
\begin{thm}\label{boundstatwave}
Let $u_0\in \mathcal E$ be a solution of \eqref{eq:statwave}. Then, for any
$$
\eta>\eta_0 = \left( \frac{1}{2} + \frac{1}{2} \frac{\log 2}{\log 3} \right)^{-1} \sim 1.226\dots,
$$
the following estimate holds,
 $$\forall z\in \C\ ,\ |u_0(z)| \leq C_\eta e^{|z|^\eta-\frac{1}{2}|z|^2}\ .$$ 
 As a consequence, if
 $$
N(R) = \# \big\{ z \in \mathbb{C} \; \mbox{such that} \; u(z) = 0 \; \mbox{and} \; |z|<R \big\} ,
$$
then for any $\eta > \eta_0$, 
$$
\frac{N(R)}{R^\eta} \longrightarrow 0 \quad \mbox{ as } \;\;R \to \infty.
$$
\end{thm}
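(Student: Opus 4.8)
The plan is to strip off the Gaussian, rephrase the statement as an order bound for an entire function, and then run a bootstrap that exploits the holomorphic/Gaussian cancellation hidden in the projector $\Pi$.

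First I would write $u_0(z)=e^{-|z|^2/2}f(z)$ with $f$ entire, and record the two elementary identities $\Lambda u_0 = e^{-|z|^2/2}\,z f'(z)$ and, from \eqref{defpi},
\[
\Pi(|u_0|^2u_0)(z) = \frac{1}{\pi}e^{-|z|^2/2}\int_{\mathbb C} e^{\overline w z - 2|w|^2}|f(w)|^2 f(w)\,dL(w).
\]
Multiplying \eqref{eq:statwave} by $e^{|z|^2/2}$ turns it into a closed equation for $f$,
\[
\lambda f(z)+\mu z f'(z) = G(z):=\frac{1}{\pi}\int_{\mathbb C} e^{\overline w z - 2|w|^2}|f(w)|^2 f(w)\,dL(w),
\]
whose right-hand side is manifestly entire. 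Since the claimed estimate is exactly $|f(z)|\le C_\eta e^{|z|^\eta}$, the statement is equivalent to showing that $f$ has order at most $\eta_0$. Comparing Taylor coefficients gives $(\lambda+\mu n)c_n=d_n$ for the coefficients of $f$ and $G$, so $f$ and $G$ have the same order.

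The starting point is that $u_0\in\mathcal E$ puts $f$ in the Bargmann--Fock space, hence $f$ has order at most $2$; the difficulty is to descend to $\eta_0$. A crude estimate, bounding $|G(z)|\le\frac1\pi\int e^{|w||z|-2|w|^2}|f|^3\,dL$ and performing the Gaussian saddle at $w\sim z/4$, only returns order $2$ (one finds $|G|\lesssim e^{|z|^2/8}$); this discards the oscillation carried by the phase of $e^{\overline w z}$, which is essential — already $\frac1\pi\int e^{\overline w z-2|w|^2}\,dL=\tfrac12$ is bounded, not of size $e^{|z|^2/8}$. To keep the cancellation I would use the exact operator identity obtained by expanding $|f|^2f=\overline f\,f^2$ and integrating monomial by monomial, using $\int \overline w^{\,k} w^{\,l} e^{\overline w z-2|w|^2}\,dL=\frac{\pi\, l!}{2^{\,l+1}(l-k)!}z^{\,l-k}$ for $l\ge k$:
\[
G(z)=\overline f(\partial_z)\Big[\tfrac12 f(z/2)^2\Big],\qquad \overline f(\partial_z):=\sum_{k\ge0}\overline{c_k}\,\partial_z^{\,k},
\]
equivalently the coefficient recursion $c_n=\frac{1}{2(\lambda+\mu n)\,n!}\sum_{m\ge n}\frac{m!}{2^{\,m}}\big(\sum_k c_kc_{m-k}\big)\overline{c_{m-n}}$. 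The heart of the argument is to run this as a bootstrap: insert a tentative bound $|f(z)|\le C\exp(a|z|^\beta)$, estimate $G$ while retaining the cancellation encoded in $\overline f(\partial_z)$ and in the inner rescaling $f\mapsto\tfrac12 f(\cdot/2)^2$, and deduce a strictly better growth exponent whenever $\beta$ exceeds $\eta_0$, iterating down to $\eta_0$.

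The value $\eta_0$ should appear as the fixed point of this iteration. Heuristically, the Gaussian localizes the three factors of $f$ in $G$ to a common rescaled argument, the scale $\sqrt6$ being dictated by the second moment of the weight $e^{-2|w|^2}=e^{-|w|^2/2}e^{-3|w|^2/2}$ (the $6=2\cdot3$ combining the projector kernel with the cubic density), so that ``cube and rescale by $\sqrt6$'' preserves the growth exponent exactly when $3\cdot 6^{-\eta/2}=1$, i.e. $6^{\eta}=9$, which is precisely $\eta=\eta_0=2\log 3/\log 6$; for $\eta>\eta_0$ the exponent strictly decreases. I expect the main obstacle to be making this cancellation quantitative and uniform over \emph{all} stationary waves: every estimate that throws away the oscillation saturates at order $2$, so one must control the delicate interaction between the differential operator $\overline f(\partial_z)$ and the rescaled square $\tfrac12 f(\cdot/2)^2$ — a two-parameter saddle-point/Phragm\'en--Lindel\"of analysis — sharply enough to see the gain stop exactly at $\eta_0$ and not earlier.

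Finally, the zero-counting statement follows from the order bound by Jensen's formula. Since $e^{-|z|^2/2}$ has no zeros, $N(R)$ counts the zeros of $f$ in $|z|<R$. Having $|f(z)|\le C_\eta e^{|z|^\eta}$ for every $\eta>\eta_0$ gives $\log M_f(R)\le R^{\eta}+O(1)$, so (assuming $f(0)\ne0$, otherwise factoring out the zero at the origin) $\int_0^{2R}\frac{n(t)}{t}\,dt=\frac1{2\pi}\int_0^{2\pi}\log|f(2Re^{i\theta})|\,d\theta-\log|f(0)|\le (2R)^{\eta}+O(1)$; monotonicity of $n$ then yields $N(R)=n(R)\le\frac{1}{\log2}\int_R^{2R}\frac{n(t)}{t}\,dt\le C_\eta R^{\eta}$. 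For a prescribed $\eta>\eta_0$, applying this with some $\eta''\in(\eta_0,\eta)$ gives $N(R)/R^{\eta}\le C_{\eta''}R^{\eta''-\eta}\to0$, which is the assertion.
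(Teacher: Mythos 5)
Your reduction to an order bound for the entire part $f$, the identity $G=\overline f(\partial_z)\bigl[\tfrac12 f(z/2)^2\bigr]$, and the closing Jensen argument are all correct, and your fixed-point heuristic does land on the right number ($\eta_0=2\log 3/\log 6$). But the proof has a genuine gap exactly where you flag it: the bootstrap step ``insert $|f|\le Ce^{a|z|^\beta}$, retain the cancellation, deduce a strictly smaller exponent'' is never carried out, and it is the entire content of the theorem. As you yourself observe, every estimate that discards the oscillation of $e^{\overline w z}$ saturates at order $2$, so without a concrete mechanism for the gain the argument does not start. There is a second, related gap at the initialization: $u_0\in\mathcal E$ only gives $f$ of order $2$ and type $\le 1/2$, and at that threshold your contraction heuristic is a statement about the \emph{type}, not the order; one must first show that the type strictly decreases (equivalently, that $|u_0(z)|\lesssim e^{-\sigma|z|^2}$ for some $\sigma>0$) before any improvement of the exponent can begin. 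Finally, for $\mu\ne 0$ the relation $(\lambda+\mu n)c_n=d_n$ degenerates when $-\lambda/\mu\in\N$, a case that must be treated separately.

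The paper closes both gaps by working in the Hermite coefficients $c_n=\langle u_0,\varphi_n\rangle$ rather than with $f$ directly. There the trilinear interaction coefficients are explicit, $\sqrt{S!/(2^Sk!(S-k)!)}\lesssim\psi(k/S)^S$ with $\psi$ valued in $(2^{-1/2},1]$ and equal to $1$ only at $x=1/2$, so the ``cancellation'' is already encoded as sub-exponential decay of the coefficients away from the diagonal $k=m=S/2$; no saddle-point or Phragm\'en--Lindel\"of input is needed. A preliminary bootstrap in $z$ on $M_n=\sup_{|w|>\kappa^{-n}}|u_0(w)|$, using only the Gaussian bound $|K(z,w)|\le\pi^{-1}e^{-|z-w|^2/2}$ (and, for $\mu\ne0$, explicit kernel bounds for $(\Lambda+\lambda/\mu)^{-1}$), yields $|u_0(z)|\lesssim e^{-\sigma|z|^2}$ and hence $|c_n|\lesssim r^n$ for some $r<1$, which is the needed initial improvement. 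Then the implication $|c_k|\le C_rr^k\Rightarrow|c_k|\le AC_r^3(\kappa'r)^k$ is iterated: the constant cubes at each step while the ratio gains a factor $\kappa'>2^{-1/2}$, and optimizing the number of iterations at $n\sim\log k/\log 3$ gives $|c_k|\lesssim k^{-\gamma k}$ for every $\gamma<\frac{\log 2}{2\log 3}$, whence $|u_0(z)|\lesssim e^{|z|^\eta-\frac12|z|^2}$ for every $\eta>\eta_0$. If you want to salvage your $z$-side plan, the coefficient formulation is precisely where the two ingredients you are missing --- the quantitative cancellation and the strict initial gain --- become elementary.
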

Secondly, we classify stationary waves with a finite number of zeros and we study their orbital stability in $\mathcal E$ and in
$$L^{2,1}_\mathcal E:=\{ u\in \mathcal E : zu\in L^2(\C)\}\ .$$
\begin{thm}\label{statwavefinite}
Up to multiplicative factors, phase rotations and space rotations, the stationary waves in $\mathcal E$ having only a finite number of zeros  are
$$
u_n^\alpha(z) = (z-\overline \alpha)^n e^{-\frac{|z|^2}{2}-\frac{|\alpha|^2}{2} + \alpha z}, \quad \alpha \in\C
$$
for which $\mu =0$, and 
$$
v_b(z) =\left( z - \frac{b(2+b^2)}{1+b^2}  \right) e^{-\frac{1}{2}|z|^2 + \frac{b}{1+b^2} z}, \quad  0\leq b <\infty.
$$
Furthermore, $u_0^\alpha $ and $u_1^\alpha $ are orbitally stable in $L^{2,1}_\mathcal E$ for phase rotations, $v _b$ is orbitally stable in $L^{2,1}_\mathcal E$ for phase and space rotations, and $u_n^\alpha , n\ge 2,$ are not orbitally stable. Finally, the set
$$\{ e^{i\theta}u_0^\alpha , \,\theta \in \T ,\, \alpha \in \C\}$$
is stable in $\mathcal E$.
\end{thm}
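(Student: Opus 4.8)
The plan is to separate the \emph{classification} of finite-zero stationary waves, which is essentially algebraic once the a priori bounds are in hand, from the \emph{stability} analysis, which rests on the conservation laws of Theorem \ref{mainCauchy} and a variational study of $\mathcal H$. Write $u_0 = e^{-|z|^2/2} f$ with $f$ entire. Theorem \ref{boundstatwave} gives $|f(z)| \le C_\eta e^{|z|^\eta}$ with $\eta < 2$, so $f$ has order strictly less than $2$; since it has only finitely many zeros, Hadamard factorization forces $f(z) = P(z)\, e^{az}$ with $P$ a polynomial of degree equal to the number of zeros and $a \in \C$. I would then push \eqref{eq:statwave} onto $f$: a direct computation gives $\Lambda u = e^{-|z|^2/2} z f'$, while the kernel \eqref{defpi} yields $\Pi(|u|^2 u) = e^{-|z|^2/2} g$ with
\[
g(z) = \frac1\pi \int_\C e^{\overline w z - 2|w|^2}\, f(w)^2\, \overline{f(w)}\, dL(w).
\]
Inserting $f = P e^{az}$ and evaluating the elementary Gaussian moments $\frac1\pi\int_\C e^{\overline w z - 2|w|^2} w^p \overline w^{\,q}\, dL(w)$ shows that $g$ is again of the form $(\text{polynomial})\times e^{az}$, so \eqref{eq:statwave} collapses to the single polynomial identity $\lambda P + \mu z(P' + aP) = Q$, with $Q$ explicit, quadratic in the coefficients of $P$ and linear in their conjugates. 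Matching coefficients, together with the vanishing of the spurious high-degree terms generated by the cubic nonlinearity, produces a finite algebraic system.

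I would split this system on whether $\mu = 0$. If $\mu = 0$, the magnetic translation $u \mapsto e^{\frac12(z\overline\beta - \overline z\beta)} u(z-\beta)$, which preserves $\mathcal E$, commutes with $\Pi$, and leaves $\mathcal H$ and the mass invariant, can be used to set $a = 0$; the system $\lambda P = Q$ then forces $P$ to be a single monomial $c\,z^n$ (the strict positivity of the Gaussian moments excludes any mixing of powers), with $\lambda = (2n)!/(n!\,2^{2n+1})$, and translating back recovers exactly $u_n^\alpha$. If $\mu \ne 0$, the term $\mu a z P$ raises the degree, and balancing it against $Q$ first forces $\deg P = 1$, i.e. a single zero; normalizing by a multiplicative constant, a phase and a space rotation then reduces the residual nonlinear relations between $a$ and the coefficients of $P$ to the one explicit one-parameter solution $v_b$. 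The main obstacle in the classification is precisely this $\mu \ne 0$ computation: one must exclude every $\deg P \ge 2$ and solve the remaining system explicitly, which is where the rational profile of $v_b$ is forced.

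The variational input for stability is that $M = \int_\C |u|^2\,dL$, $\mathcal H$, the angular momentum $A = \int_\C |z|^2 |u|^2\,dL$, and the center of mass $\int_\C z\,|u|^2\,dL$ are all conserved (Theorem \ref{mainCauchy}). The coherent state $u_0^\alpha$ is the maximizer of $\mathcal H$ subject to $M$ fixed, via the sharp inequality $\int_\C |u|^4\,dL \le \frac1{2\pi}\big(\int_\C |u|^2\,dL\big)^2$ on $\mathcal E$, with equality exactly on the Gaussians; since in $\mathcal E$ the symmetry group includes the magnetic translations, a concentration--compactness argument in the style of Cazenave--Lions yields orbital stability of the whole set $\{e^{i\theta}u_0^\alpha\}$, while in $L^{2,1}_{\mathcal E}$ the finite, conserved center of mass fixes $\alpha$ and leaves only the phase. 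For $u_1^\alpha$ and $v_b$ I would show they are local maximizers of $\mathcal H$ at fixed $M$ and fixed $A$: the weight in $L^{2,1}_{\mathcal E}$ restores compactness of the relevant maximizing sequences, and orbital stability follows for the symmetry orbit of the profile. Since $v_b$ ($b>0$) is genuinely non-radial, the space rotation enlarges this orbit and enters the statement, whereas $u_1^0 = z\,e^{-|z|^2/2}$ is radial in modulus, so space rotation acts as a phase, explaining the phase-only conclusion there.

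For $u_n^\alpha$, $n \ge 2$, I would exhibit admissible perturbations --- mixing the mode $z^n$ symmetrically with $z^{n-j}$ and $z^{n+j}$ so as to preserve both $M$ and $A$ --- along which $\mathcal H$ strictly increases, showing $u_n$ is a saddle rather than a constrained maximizer. The two genuinely hard points, which together constitute the crux of the stability analysis, are: (i) establishing the constrained \emph{maximality} of $u_1$ and $v_b$, which amounts to proving definiteness of the second variation of $\mathcal H - \lambda M - \mu A$ on the symplectic complement of the symmetry directions (equivalently, a sharp inequality $\int_\C |u|^4\,dL \le F(M,A)$ saturated exactly on these profiles); and (ii) upgrading the saddle structure for $n \ge 2$ to genuine orbital instability, i.e. constructing solutions that leave every tubular neighborhood of the orbit, which I would handle by a Grillakis--Shatah--Strauss type Lyapunov functional built from the conserved quantities $M$, $A$ and $\mathcal H$.
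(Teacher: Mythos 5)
Your classification half is sound and follows essentially the paper's route: reduce a finite-zero element of $\mathcal{E}$ to the form $P(z)e^{az-|z|^2/2}$, compute $\Pi(|u|^2u)$ by Gaussian moment identities, and match polynomial coefficients, splitting on $\mu=0$ versus $\mu\neq 0$. Your entry point is slightly different --- you invoke Theorem~\ref{boundstatwave} to bound the order of $f$ below $2$ and then apply Hadamard factorization, whereas the paper uses Borel--Carath\'eodory and Liouville to allow $\deg Q\le 2$ and disposes of the degree-two branch separately (those solutions live in $\widetilde{\mathcal{E}}\setminus\mathcal{E}$); both work, and yours is arguably cleaner for the $\mathcal{E}$ statement. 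Be aware, though, that for $\mu\neq 0$ the degree count alone does \emph{not} force $\deg P=1$: it only forces $P=(z-\overline\alpha)^n+\beta(z-\overline\alpha)^{n-1}$, and excluding $n\ge 2$ requires solving the full coefficient system (in the paper, equations \eqref{osprey1}--\eqref{osprey4}, where $n\ge2$ leads to the arithmetic contradiction $4\tfrac{(2n-2)!}{(n-1)!}=\tfrac{(2n)!}{(n+1)!}$). You flag this as the hard point but assert the wrong mechanism for it.

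The genuine gap is in the stability argument. You propose to prove that $u_1^\alpha$ and $v_b$ are local \emph{maximizers} of $\mathcal{H}$ at fixed $M$ and fixed $A=P+M$. This is false, and the step would fail at the second-variation computation. Take $\phi_1$ and perturb by $c_0=\epsilon x$, $c_2=\epsilon y$ with $c_1$ adjusted to keep $M=1$; the constraint $P=1$ forces $y^2=x^2$, and one computes
$$8\pi\mathcal{H}=\tfrac12+\tfrac{\epsilon^2}{2}\bigl(\sqrt2\,x+y\bigr)^2+\mathcal{O}(\epsilon^4)\ \ge\ 8\pi\mathcal{H}(\phi_1),$$
with strict inequality for $x\neq0$: $\phi_1$ is a constrained local \emph{minimizer}, not a maximizer. (This is consistent with Proposition~\ref{proplocalmin}, which shows $\phi_1$ locally minimizes $8\pi\mathcal{H}+\mu P$ at fixed $M$ for $\tfrac5{32}<\mu<\tfrac12$.) The analogous statement holds for $\psi_b$, and there a further subtlety arises which your uniform ``fix $M$ and $A$'' prescription misses: $\psi_b$ is \emph{not} a local minimizer of $8\pi\mathcal{H}+\mu P$ at fixed $M$ for any $\mu\neq\tfrac12$ (varying $b$ moves $G_\mu$ monotonically), which is why the paper instead minimizes $P$ at fixed $\mathcal{H}$ and $M$ (Proposition~\ref{proplocalmin2}) --- the one-parameter family $\{\psi_b\}$ must be sliced transversally. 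Finally, for the instability of $u_n^\alpha$, $n\ge2$, exhibiting a single admissible direction along which $\mathcal{H}$ increases only shows $u_n$ is not a constrained maximizer; since these waves are in fact near the ``minimizing'' regime, that observation carries no instability information by itself. The paper proves instability by linearizing in the $(c_k)$ coordinates, exhibiting the coupled pair $(d_k,\overline{d_{2n-k}})$ with positive discriminant $\Delta_{n,n-2}>0$, hence an exponentially growing mode, and then invoking ``linear implies nonlinear instability.'' Your Grillakis--Shatah--Strauss route is viable in principle but requires exactly this spectral input, which the proposal does not supply.
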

Our third class of results about stationary waves concern existence of stationary waves with an infinite number of zeros. 
We construct these objects using three different methods. Firstly, by bifurcation from $u_n^0 $ --- see Proposition~\ref{bifurc}. Secondly, by a minimization argument combined with the classification result of stationary waves having only a finite number of zeros--- see Proposition~\ref{PropminP}. Finally, by explicit formulae we construct stationary waves having zeros on sets $\gamma {\mathbb Z}$ and $\gamma {\mathbb Z}\cup \frac{i\pi}{k\ov{\gamma}}{\mathbb Z}$, where $\gamma \ne 0$ is an arbitrary complex number, and $k\ne 0$ is an arbitrary integer--- see Appendix \ref{explicitstatwave}. In the first case, the growth at infinity is at most $e^{c|z|-|z|^2/2}$, while this growth is optimal  in the third case. We have not been able to find stationary waves with a faster growth at infinity.

\subsubsection{Number of zeros of the minimizer} We now turn to the question of the number of zeros (in particular, finite or not) of minimizers of a physically relevant variational problem involving the conserved quantities of the equation. In order to describe the results obtained in this respect, we switch to semi-classical coordinates, which are most commonly used in this context.

Let $0<h<1$ be a small parameter and denote by
 $$\mathcal{E}_h =\big \{\, v(w) = e^{-\frac{|w|^2}{2h}} g(w)\;,\;g \; \mbox{entire\ holomorphic}\,\big\}\cap L^2(\C ).$$
Define the energy functional 
\begin{equation}\label{nrj}
E^{h}_{LLL}(v)=\int_{\C}\Big(|w|^2|v(w)|^2+\frac{Na\Omega^2_h}2|v(w)|^4\Big)dL(w),
\end{equation}
where $N,a>0$ are parameters, and $\Omega_h^2=1-h^2$. Consider  the minimizing problem
\begin{equation}\label{mini}
\min_{\substack{v \in \mathcal{E}_h \\ M(v) =1}} E^{h}_{LLL}(v).
\end{equation}
In \cite[Theorem 1.2]{ABN},  Aftalion, Blanc and Nier give conditions on $0<h<1$ and on the Lagrange multiplier associated to the problem \eqref{mini} such that the global minimizer of \eqref{nrj} at fixed mass has an infinite number of zeros. Thanks to the classification result of Theorem~\ref{statwavefinite} combined with a global analysis, we are able to weaken their conditions. Moreover, we prove that the  Gaussian is the unique global minimizer for  an explicit range of the parameter $h>0$. Our result reads

\begin{thm}\label{thmmini} Set $\kappa_0=\frac5{32}$ and $\kappa_1=\sqrt{3}-1$. 

$(i)$ Assume that 
\begin{equation}\label{56}
h<\sqrt{\kappa_0\frac{Na \Omega^2_h}{4 \pi}}.
\end{equation}
Then every local or global minimizer of \eqref{mini} has an infinite number of zeros.

$(ii)$ Assume that 
\begin{equation}\label{57}
h>\sqrt{\kappa_1\frac{ Na \Omega^2_h}{4 \pi}}.
\end{equation}
Then 
$$\phi_{0,h}(z)= \frac{1}{\sqrt{\pi h }}  e^{-\frac{|z|^2}{2h}}$$
is the unique global minimizer of \eqref{mini} and
$$E^{h}_{LLL}(\phi_{0,h})=\frac{Na \Omega_h^2}{4\pi h}+h.$$
\end{thm}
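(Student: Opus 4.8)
The plan is to transport problem~\eqref{mini} to the fixed model space $\mathcal E$, identify its minimizers as stationary waves, and then play the classification of Theorem~\ref{statwavefinite} against explicit energy computations.

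\textbf{Reduction.} Writing $v(w)=\lambda\,u(w/\sqrt h)$ with $u\in\mathcal E$ gives a bijection $\mathcal E_h\to\mathcal E$; imposing the mass constraint $M(v)=\int_\C|v|^2\,dL=1$ fixes $\lambda$, and a change of variables turns the energy into
\begin{equation*}
E^{h}_{LLL}(v)=h\int_{\C}|z|^2|u|^2\,dL+\frac{Na\Omega_h^2}{2h}\int_{\C}|u|^4\,dL=:\mathcal F_h(u),\qquad M(u)=1 .
\end{equation*}
Setting $P(u)=\int_{\C}|z|^2|u|^2\,dL$, $Q(u)=\int_{\C}|u|^4\,dL$ and $\rho=\tfrac{4\pi h^2}{Na\Omega_h^2}$, minimizing $E^h_{LLL}$ amounts to minimizing $P(u)+\tfrac{2\pi}{\rho}Q(u)$ at fixed mass, and the two hypotheses read $\rho<\kappa_0$ and $\rho>\kappa_1$. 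The Gaussian $\phi_0=\pi^{-1/2}e^{-|z|^2/2}$ has $P(\phi_0)=1$, $Q(\phi_0)=\tfrac1{2\pi}$, which reproduces the stated value $E^h_{LLL}(\phi_{0,h})=\tfrac{Na\Omega_h^2}{4\pi h}+h$.

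\textbf{Minimizers are stationary waves.} In the orthonormal basis $\phi_n=(\pi n!)^{-1/2}z^ne^{-|z|^2/2}$ one checks the operator identities $\Pi(|z|^2\,\cdot\,)=\Lambda+1$ and $\Lambda\phi_n=n\phi_n$ on $\mathcal E$; hence the Euler--Lagrange equation of $\mathcal F_h$ at fixed mass is precisely \eqref{eq:statwave} with $\mu=-\rho/(4\pi)<0$. Any local or global minimizer is therefore a stationary wave, and if it has finitely many zeros Theorem~\ref{statwavefinite} places it, up to symmetries, in the list $\{u_n^0\}_{n\ge0}\cup\{v_b\}$: the translated waves $u_n^\alpha$ with $\alpha\ne0$ are excluded since $\Lambda u_n^\alpha$ is not proportional to $u_n^\alpha$, which forces $\mu=0$, whereas each centered $u_n^0$ solves \eqref{eq:statwave} for every $\mu$. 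After normalization one computes $P(u_n^0)=n+1$, $Q(u_n^0)=\tfrac1{2\pi}\binom{2n}{n}4^{-n}$, and $P(v_b)=1+(1+b^2)^{-2}$, $Q(v_b)=\tfrac1{2\pi}-\tfrac1{4\pi}(1+b^2)^{-2}$.

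\textbf{Part (ii).} Put $D(u)=\tfrac1{2\pi}-Q(u)\ge0$ (the sharp $L^4$ bound $Q\le\tfrac1{2\pi}M^2$, with equality only for translated Gaussians) and $t=P(u)-1=\langle\Lambda u,u\rangle\ge0$. I would prove a linear control $D(u)\le\tfrac{\kappa_1}{2\pi}\,t$ for every $u\in\mathcal E$ with $M(u)=1$; then
\begin{equation*}
\mathcal F_h(u)-\mathcal F_h(\phi_0)=h\,t-\frac{Na\Omega_h^2}{2h}D(u)\ge\Big(h-\frac{Na\Omega_h^2\,\kappa_1}{4\pi h}\Big)t\ge0
\end{equation*}
as soon as $h^2\ge\kappa_1\tfrac{Na\Omega_h^2}{4\pi}$, with equality forcing $t=0$, i.e.\ $u=\phi_0$, which gives both global minimality and uniqueness. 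To reach the inequality I would split $u=\sqrt{1-s}\,\phi_0+\sqrt s\,\psi$ with $\psi\perp\phi_0$, use $t\ge s$, expand $Q(u)$, and bound the cross terms by Cauchy--Schwarz; optimizing the resulting quadratic in $\sqrt s$ is what produces the value $\kappa_1=\sqrt3-1$. \emph{Establishing this inequality for all $u$ --- not merely for the explicit families, which only exhibit an easier constant --- is the main obstacle of part (ii).}

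\textbf{Part (i).} Here I would show that for $\rho<\kappa_0$ no finite-zero wave of the previous step can be even a local minimizer. Modulo the phase and rotation symmetries, the second variation of $P+\tfrac{2\pi}{\rho}Q$ at such a wave is $(\Lambda+1)+\tfrac{2\pi}{\rho}\,\mathrm{Hess}\,Q$ restricted to the mass constraint; since $\Lambda+1>0$ while $\mathrm{Hess}\,Q$ is indefinite (consistent with the instability of $u_n^0$, $n\ge2$, and with $\phi_0$ being an $L^4$-maximizer), the negative directions of the quartic part prevail once $\rho$ is small enough, opening a descent direction --- concretely a deformation splitting the zero, or a bending towards one of the infinite-zero waves bifurcating from $u_n^0$. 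The most robust finite-zero wave sets the threshold, and the explicit energies above reduce this to an elementary one-variable comparison whose outcome is $\kappa_0=\tfrac5{32}$; a global energy comparison with a single infinite-zero trial state then rules out a finite-zero global minimizer as well. \emph{Locating the least stable finite-zero wave and computing its sign-change threshold, which yields $\kappa_0=\tfrac5{32}$, is the crux of part (i).}
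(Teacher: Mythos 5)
Your architecture is exactly the paper's: the rescaling $u(z)=\sqrt h\,v(\sqrt h z)$ turns \eqref{mini} into the minimization of $G_\mu=8\pi\mathcal H+\mu P$ at fixed mass with $\mu=\frac{4\pi h^2}{Na\Omega_h^2}$, so that \eqref{56} and \eqref{57} become $\mu<\frac5{32}$ and $\mu>\sqrt3-1$; minimizers are critical points, hence stationary waves, and Theorem~\ref{statwavefinite} reduces the finite-zero case to an explicit list. Your key inequality for (ii), namely $\frac1{2\pi}-\int|u|^4\le\frac{\kappa_1}{2\pi}\langle\Lambda u,u\rangle$ for $M(u)=1$, is precisely the paper's statement $F_{\kappa_1}(u)\ge0$ from Proposition~\ref{propglob}(ii). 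However, both computations that actually produce the constants are left open, and the routes you sketch for them would not succeed as described.

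For (ii), the paper proves $F_\mu\ge0$ for $\mu\ge\sqrt3-1$ entirely in the $(c_k)$ coordinates: it discards the terms $\ell\ge3$ of $\sum_\ell 2^{-\ell}|S_\ell|^2$, checks that the remainder $R_\mu$ is nonnegative for $\mu\ge\frac23$, and completes the square as $(\mu-\frac12)|c_1^2+\sqrt2\,c_0c_2|^2$ plus a $2\times2$ form in $(\overline c_0c_1,\,c_1\overline c_2)$ whose positivity is exactly $\mu^2+2\mu-2\ge0$. The sharp constant is created by the three-mode interaction $\Re(\overline c_0c_1^2\overline c_2)$ together with its phase structure; your plan of splitting $u=\sqrt{1-s}\,\phi_0+\sqrt s\,\psi$ and bounding cross terms by Cauchy--Schwarz erases precisely this structure (Cauchy--Schwarz on that term is lossy and blind to the relative phases of $c_0,c_1,c_2$), so optimizing a quadratic in $\sqrt s$ will not yield $\sqrt3-1$. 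For (i), the constant $\frac5{32}$ does not come from comparing the explicit energies $P(u_n^0),Q(u_n^0),P(v_b),Q(v_b)$ you list, nor from a soft "the quartic Hessian prevails for small $\rho$" argument; it is the exact threshold at which $\phi_1$ ceases to be a local minimizer. The second variation of $G_\mu$ at $\phi_1$ on the mass sphere reduces, after using the constraints, to $(2-2\mu)|\dot c_0|^2+(1+2\mu)|\dot c_2|^2+2\sqrt2\,\Re(\dot c_0\dot c_2)+\sum_{n\ge3}\bigl(\frac{n+1}{2^{n-2}}-2+2\mu(n-1)\bigr)|\dot c_n|^2$, and the binding directions are $n=5,6$, giving $\mu>\frac5{32}$. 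One must separately check that $\phi_0$ fails for $\mu<\frac12$ (direction $n=2$), that $\phi_n$ with $n\ge2$ fails for every $\mu>0$ (explicit trial deformations, e.g.\ $\epsilon\phi_0+\sqrt{1-2\epsilon^2}\phi_2-\epsilon\phi_4$ for $n=2$), that $v_b$ with $b>0$ is eliminated because varying $b$ strictly decreases $G_\mu(\psi_b)=1+(\mu-\frac12)(1+b^2)^{-2}$ when $\mu\ne\frac12$, and that $u_n^\alpha$ with $\alpha\ne0$ is excluded as you say. Without these second-variation computations the value $\frac5{32}$ cannot be recovered, so as written the proposal establishes neither constant.
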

As mentioned earlier, a question of great interest is the localization of the zeros of the minimizer in the case \eqref{56}. The result of Theorem~\ref{boundstatwave} gives some information about the distribution of the zeros, and we refer  to \cite{AB}  for a study of minimizing sequences whose zeros are localized on lattices.

\subsection{Organization of the paper} Section~\ref{Sect2} is devoted to general background about the equation~\eqref{LLL}. Local and global well-posedness results are established in Section~\ref{Sect3}. In Section~\ref{Sect4}, we prove Theorem~\ref{bounds}, and two results improving this polynomial bound for  perturbations of~\eqref{LLL} under generic Hermite multipliers. The last three sections are devoted to stationary waves for~\eqref{LLL}. In Section~\ref{Sect5}, we prove general a priori bounds on the growth of stationary waves at infinity. In Section~\ref{Sect6}, we classify stationary waves with a finite number of zeros, and show how to construct others by perturbation from some of them. Section~\ref{Sect7} deals with various variational problems leading to stationary waves, with applications to stability theory of stationary with a finite number of zeros.  Finally, three appendices are devoted to some explicit stationary waves, a dictionary through Bargmann transform, and an elementary characterization of Sobolev spaces at the lowest Landau level.

\section{Symmetries, conserved quantities and special coordinates}\label{Sect2}
 In this whole section, we present the structure of~\eqref{LLL} while remaining at a formal level.  

\subsection{Hamiltonian structure}
Define first the symplectic form on $\mathcal{E}$:
$$
\omega(u,v) =  \mathfrak{Im} \int_{\mathbb{C}} u\overline{v}\,dL.
$$
Given a functional $F$ on $\mathcal{E}$, its symplectic gradient $\nabla_\omega F \in \mathcal{E}$ is such that $\omega( \nabla_\omega F(u)\,,\, \varphi) = dF(u)\cdot \varphi$. The Hamiltonian flow associated to $F$, denoted $\varphi_F$, is then defined for $t \in \mathbb{R}$ by
$$
\varphi_F(t) u_{0} = u(t) \quad \mbox{where $u$ solves} \quad \left\{ \begin{array}{l} \partial_t u(t) = - \nabla_\omega F(u) \\[3pt] u(t=0) = u_0. \end{array} \right.
$$
The equation~\eqref{LLL} corresponds to the Hamiltonian flow for the Hamiltonian 
$$
\mathcal{H}(u) =\frac 14  \int_{\mathbb{C}} |u|^4 \,dL \qquad \mbox{on $\mathcal{E}$}.
$$
In other words, the solution of~\eqref{LLL} with data equal to $u_0$ can be written $u(t) = \varphi_\mathcal{H}(t) u_0$.

Observe that the Hamiltonian $\mathcal{H}$ is left invariant by the following symmetries: phase rotations
\begin{equation*} 
T_{\gamma} : u(z) \mapsto e^{i \gamma }u(z)  \qquad \mbox{for $\gamma \in \mathbb{T}$},
\end{equation*}
 space rotations 
\begin{equation*} 
L_{\phi} : u(z) \mapsto u(e^{i\phi}z)  \qquad \mbox{for $\phi \in \mathbb{T}$},
\end{equation*}
and magnetic translations 
\begin{equation}\label{defR}
R_{\alpha} : u (z) \mapsto u(z+\alpha) e^{\frac{1}{2}(\overline z \alpha - z \overline{\alpha})} \qquad \mbox{for $\alpha \in \mathbb{C}$}.
\end{equation}

These symmetries are via Noether theorem related to quantities which are  invariant by the flow of~\eqref{LLL}: the mass $M$, angular momentum $P$, and magnetic momentum $Q$ which are given, for $u \in \mathcal{E}$, by
\begin{align*}
&M(u) = \int |u|^2 dL \\
&P(u) = \int  \Lambda u \, \overline{u}\,dL= \int_{\mathbb{C}} (\vert z\vert^2-1)\vert u(z)\vert^2 \,dL(z) \\
&Q(u) = Q_x(u) + i Q_y(u)= \int_{\C} z |u|^2(z) dL(z) ,
\end{align*}
where $\Lambda$ is the angular momentum operator, defined by
$$
\Lambda = i (y\partial_x - x \partial_y) = z\partial_z - \overline{z} \overline{\partial_z},
$$
with $\partial_z=\frac12(\partial_x-i\partial_y)$. The harmonic oscillator $H$ is defined by
$$
H = -4\partial_z \partial_{\ov z}+|z|^2.
$$
It is clear that $M$, $P$ and $Q$ are left invariant by phase and space rotation; magnetic rotations~$R_\alpha$ leave $M$ invariant but act on $Q$ and $P$ as follows, 
\begin{equation*} 
Q(R_\alpha u)=Q(u)-\alpha M(u)\ ,
\end{equation*} 
and
\begin{equation*} 
P(R_\alpha u) = P(u) - 2 \mathfrak{Re} (\overline \alpha Q(u)) + |\alpha|^2 M(u).
\end{equation*}

The following table recapitulates for each quantity conserved by the flow of~\eqref{LLL} the corresponding symplectic gradient and the generated symmetry.\medskip

\begin{center}
\begin{tabular}{ | c |c | c |}
\hline
Conserved quantity & Symplectic gradient & Symmetry \\ \hline
\quad Mass \quad & & \\
$M(u) = \int |u|^2 \,dL$ &  $2iu(z)$ & $ T_{\gamma} u(z)  = e^{i \gamma} u(z), \; \gamma \in\R$ \\ \hline
\quad Angular momentum \quad & & \\
$P(u) = \int \Lambda u \overline{u} \,dL$ & $2i\Lambda u(z)$ & $L_\phi u(z) = u(e^{i \phi } z), \; \phi\in\R$ \\
\hline
\quad Magnetic momentum \quad& & \\
$Q_x(u) = \int x|u|^2 \,dL$ & $\quad 2i\Pi(xu) = i \Big( z + \partial_z + \frac{\ov  z}{2} \Big) u(z)\quad $&  $R_{i\beta} u = u(z+i \beta) e^{i\beta x}, \; \beta \in\R$\\ 
$Q_y(u) = \int y|u|^2 \,dL$ & $\quad 2i\Pi(yu) = \Big( z - \partial_z - \frac{\ov  z}{2} \Big) u(z)$ \quad& $R_\alpha u = u(z+\alpha) e^{-i \alpha y}, \; \alpha\in\R$ \\ \hline
\end{tabular}
\end{center}
\medskip

Notice that the phase rotation $T_\gamma$ obviously commutes with all the other symmetries, but this is not the case for $L_\phi$, $R_{\alpha}$ and $R_{i \beta}$, for $\gamma, \phi, \alpha, \beta \in \R$.

\begin{rem}\label{eqNier}
The equation $i\partial_{t}v-\Lambda v= \Pi (|v|^2 v)$ which derives from the Hamiltonian  $\mathcal{H}(u) + 2P(u)$ was studied by Nier\;\cite{Nier}. Since the Hamiltonian flows generated by $\mathcal{H}$ and $P$ commute, it is equivalent to~\eqref{LLL}.
\end{rem}
\medskip

\subsection{The basis $(\varphi_n)$}\label{para12} Denote by $(\phi_n)_{n \geq 0}$ the family of the special Hermite functions given by 
$$
\varphi_n(z) = \frac{1}{\sqrt{\pi n!}} z^n e^{-\frac{|z|^2}{2}}.
$$
By~\cite[Proposition 2.1]{Zhu}, the family $(\phi_n)_{n \geq 0}$ forms  a Hilbertian basis of $\mathcal{E}$, and we can check that they are the eigenfunctions in $\mathcal{E}$ of $H$, $\Lambda$ and of the Fourier transform\footnote{with the normalization $
\mathcal{F} f(\xi) = \frac{1}{2\pi} \int_\C e^{-i\xi \cdot z} f(z) dL(z)$, where $\xi .z:= \Re(\xi \overline z) $. } $\mathcal{F}$ 
\begin{equation*}
H \varphi_n = 2(n+1) \varphi_n, \quad\Lambda \varphi_n = n \varphi_n, \quad \mathcal{F} \varphi_n = i^n \varphi_n.
\end{equation*}
Observe that the Fourier transform will not play any particular role, since $\mathcal{F}=L_{\frac{\pi}2}$. Incidentally, this implies the invariance of the equation under $\mathcal{F}$.\medskip

The   kernel of the projector $\Pi$ is given by
\begin{equation*}
K(z,\xi)=\sum_{n=0}^{+\infty}\phi_n(z)\ov{\phi_n}(\xi)=\frac{1}{\pi}e^{\ov{\xi}z}e^{-\vert \xi\vert^2/2}e^{-\vert z\vert^2/2}, \quad   (z,\xi)\in \C\times \C,
\end{equation*} 
which leads to the formula \eqref{defpi}. \medskip

Decomposing $u$ in this basis
$$
u = \sum_{n=0}^{+\infty} c_n \varphi_n,
$$
the conserved quantities become
$$ \mathcal{H}(u) =  \frac{1}{8\pi}\sum_{\substack{ k,\ell,m,n \geq 0 \\ k + \ell = m + n }} \frac{(k + \ell)!}{2^{k+\ell} \sqrt{k! \ell ! m ! n!}} \overline{c_k} \overline{c_\ell} c_m c_n =   \frac{1}{8\pi }\sum _{\ell =0}^{+\infty} \frac 1{2^{\ell }}\left \vert \sum _{n+p=\ell} c_nc_p\left (\frac{(n+p)!}{n!p!}\right )^{1/2}  \right \vert ^2 $$
\begin{align*}
& M(u) = \sum_{n=0}^{+\infty} |c_n|^2 \\
& P(u) = \sum_{n = 1}^{+\infty} n |c_n|^2 \\
& Q(u) = \sum_{n=0}^{+\infty} \sqrt{n+1} c_n \overline{c_{n+1}},
\end{align*}
see~\eqref{sst} and \cite{GHT1}, while~\eqref{LLL} reads
\begin{equation}
\label{LLLc}
i \partial_t c_k =  \sum_{\substack{ \ell,m,n \geq 0 \\ k + \ell = m + n }} \frac{1}{2\pi} \frac{(k + \ell)!}{2^{k+\ell} \sqrt{k! \ell ! m ! n!}}  \overline{c_\ell} c_m c_n, \quad k\geq 0.
\end{equation}

\subsection{Tempered distributions}
Sometimes we will need to work in the following enlarged lowest Landau level space,
\begin{equation*} 
\tilde {\mathcal E}:=\left \{ u(z) = e^{-\frac{|z|^2}{2}} f(z)\;,\;f \; \mbox{entire\ holomorphic}\, \right \}\cap \mathscr{S}'({\mathbb C})=\left \{ u\in \mathscr{S}'(\C ), \ov {\pa_z}u+\frac	{z}{2}u=0\right \} \ .
\end{equation*}
One can easily establish  that elements of $\tilde{\mathcal E}$ are series of the form
$$u=\sum_{n=0}^{+\infty} c_n \varphi _n\ ,$$
where the sequence $(c_n)$ has at most a polynomial growth in $n$.

Observe that $H=2(\Lambda+1)$  on $\widetilde {\mathcal E}$ . \medskip

\section{Well-posedness}\label{Sect3}

\subsection{Local well-posedness in $z$ coordinates}

For $p \in [1,\infty]$, the weighted $L^p$ space $L^{p,\alpha}$ is given by the norm
$$
\| f \|_{L^{p,\alpha}} = \| \langle z \rangle^\alpha f(z) \|_{L^p(\mathbb{C})}.
$$
Define then
\begin{align*}
& L^p_\E =\big \{\, u(z) = e^{-\frac{|z|^2}{2}} f(z)\;,\;f \; \mbox{entire\ holomorphic}\,\big\}\cap L^p(\C )\\
& L^{p,\alpha}_\E = \big \{\, u(z) = e^{-\frac{|z|^2}{2}} f(z)\;,\;f \; \mbox{entire\ holomorphic}\,\big\}\cap L^{p,\alpha}(\C ).
\end{align*}
These are Banach spaces when endowed with their natural norms.
A classical estimate~\cite{Zhu} gives the embedding of $L^p_\mathcal{E}$ in $L^q_{\mathcal{E}}$ for $p < q$; the inequality with the optimal constant reads~\cite{Carlen}, for all~$u \in \mathcal{E}$
\begin{equation} 
\label{hypercontractivity}
\mbox{if $1 \leq p \leq q \leq \infty$,} \qquad\left( \frac{q}{2\pi} \right)^{1/q} \| u \|_{L^q(\C)} \leq \left( \frac{p}{2\pi} \right)^{1/p} \| u \|_{L^p(\C)}.
\end{equation}

In order to discuss~\eqref{LLL} in $L^p_\mathcal{E}$, we need to extend $\Pi$ to $L^p$; this is easily achieved.

\begin{prop}\label{prop22}
For any $p\in [ 1,\infty]$ and $\alpha \geq 0$, the projector $\Pi$ has a unique bounded extension to $L^p$ and $L^{p,\alpha}$, which is given by the kernel $\frac{1}{\pi} e^{-\frac{|z|^2}{2} - \frac{|w|^2}{2} + \overline{w}z}$.
\end{prop}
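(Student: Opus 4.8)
The plan is to reduce the boundedness of $\Pi$ to Young's convolution inequality, by exploiting the translation invariance of the \emph{modulus} of the proposed kernel.

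First I would record the elementary but crucial identity
\[
\Big| \frac{1}{\pi} e^{-\frac{|z|^2}{2} - \frac{|w|^2}{2} + \ov w z} \Big| = \frac{1}{\pi} e^{-\frac{|z|^2}{2} - \frac{|w|^2}{2} + \Re(\ov w z)} = \frac{1}{\pi} e^{-\frac12 |z-w|^2} =: g(z-w),
\]
which follows from $|z-w|^2 = |z|^2 + |w|^2 - 2\Re(\ov w z)$. Thus, setting $\widetilde\Pi u(z) := \frac1\pi \int_\C e^{-\frac{|z|^2}{2} - \frac{|w|^2}{2} + \ov w z} u(w)\, dL(w)$, one has the pointwise domination $|\widetilde\Pi u| \leq g * |u|$, where $g(\zeta) = \frac1\pi e^{-\frac12|\zeta|^2}$ is a fixed Gaussian with $\|g\|_{L^1(\C)} = 2$.

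Next I would obtain the $L^p$ bound directly from Young's inequality $\|g * |u|\|_{L^p} \leq \|g\|_{L^1}\|u\|_{L^p}$, valid for all $p \in [1,\infty]$, which gives $\|\widetilde\Pi u\|_{L^p(\C)} \leq 2\,\|u\|_{L^p(\C)}$. For the weighted spaces $L^{p,\alpha}$ with $\alpha \geq 0$, I would insert Peetre's inequality $\langle z\rangle^\alpha \leq 2^{\alpha/2}\langle z-w\rangle^\alpha\langle w\rangle^\alpha$ (itself a consequence of $\langle z\rangle^2 \leq 2\langle z-w\rangle^2\langle w\rangle^2$) into the convolution, so that
\[
\langle z\rangle^\alpha |\widetilde\Pi u(z)| \leq 2^{\alpha/2} \int_\C G_\alpha(z-w)\, \langle w\rangle^\alpha |u(w)|\, dL(w), \qquad G_\alpha(\zeta) := \langle \zeta\rangle^\alpha g(\zeta).
\]
Since the Gaussian decay dominates the polynomial weight, $G_\alpha \in L^1(\C)$, and a second application of Young's inequality yields $\|\langle z\rangle^\alpha \widetilde\Pi u\|_{L^p} \leq 2^{\alpha/2}\|G_\alpha\|_{L^1}\|\langle z\rangle^\alpha u\|_{L^p}$, i.e.\ boundedness on $L^{p,\alpha}$.

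It remains to identify $\widetilde\Pi$ with an extension of $\Pi$ and to settle uniqueness. For $u \in L^2 \cap L^p$ the defining integral \eqref{defpi} converges absolutely by the same estimate, so $\widetilde\Pi u$ coincides with $\Pi u$, the latter being given on $L^2$ by precisely this kernel. Since $L^2 \cap L^p$ is dense in $L^p$ for $p < \infty$, the continuous operator $\widetilde\Pi$ is then the unique bounded extension of $\Pi|_{L^2\cap L^p}$, and the same argument applies on $L^{p,\alpha}$. I expect the only genuinely delicate point to be the endpoint $p = \infty$: there $L^2 \cap L^\infty$ is not norm-dense in $L^\infty$, so ``unique extension'' must be understood relative to a fixed class on which $\Pi$ is canonically defined (for instance finite combinations of the $\varphi_n$, or $\mathcal E \cap L^\infty$), or in a weak-$*$ sense. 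This bookkeeping, rather than any analytic difficulty, is the main thing to handle with care, the core estimate being a one-line consequence of the convolution structure.
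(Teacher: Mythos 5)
Your proof is correct and follows essentially the same route as the paper: the Gaussian bound $|K(z,w)|\le \frac1\pi e^{-|z-w|^2/2}$, absorption of the weight $\langle z\rangle^\alpha$ into the convolution (the paper uses the additive bound $\langle z\rangle^\alpha\lesssim \langle w\rangle^\alpha+\langle z-w\rangle^\alpha$ where you use the multiplicative Peetre inequality, an immaterial difference), Young's inequality, and density of $L^2$ (understood weakly for $p=\infty$, exactly the caveat you raise).
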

\begin{proof}
The kernel $K(z,w) = \frac{1}{\pi} e^{-\frac{|z|^2}{2} - \frac{|w|^2}{2} + \overline{w}z}$ enjoys Gaussian bounds: $|K(z,w)| \leq \dis  \frac1{\pi}  e^{-\frac{|z-w|^2}{2}}$. Therefore, for $u \in L^2 \cap L^{p,\alpha}$,
\begin{align*}
\| \Pi u \|_{L^{p,\alpha}} & = \left\| \langle z \rangle^\alpha \int_\C K(z,w) u(w) \,dL(w) \right\|_{L^p} \lesssim  \left\|  \int_\C e^{-\frac{1}{2}|z-w|^2} \big[\langle w \rangle^\alpha + \langle z- w \rangle^\alpha \big] u(w) \,dL(w) \right\|_{L^p} \\
& \lesssim \| \langle z \rangle^\alpha u(z)\|_{L^p} = \| u \|_{L^{p,\alpha}}.
\end{align*}
Since $L^2$ is dense (in the weak sense for $p=\infty$) in $L^{p,\alpha}$, we obtain a unique bounded extension of the projection operator $\Pi$.
\end{proof}

With this extension of $\Pi$ to $L^{p,\alpha}$ for any $p$, the meaning of~\eqref{LLL} for $u \in L^\infty([0,T],L^{p,\alpha})$ is now clear.

\begin{prop}\label{wpLp}
\begin{itemize}
\item[(i)] ($L^p$ spaces) For any $p \in [ 1,\infty]$, the equation~\eqref{LLL} is locally well-posed in $L^p$: for any data $u_0$ in $L^p_\mathcal{E}$, there exists $T>0$ and a unique solution in $L^\infty([0,T],L^p_\mathcal{E})$, which depends smoothly on $u_0$.
\item[(ii)] (Weighted $L^p$ spaces) For any $p \in [1,\infty]$, $\alpha \geq 0$, the equation~\eqref{LLL} is locally well-posed in $L^{p,\alpha}$: for any data $u_0$ in $L^{p,\alpha}_\mathcal{E}$, there exists $T>0$ and a unique solution in $L^\infty([0,T],L^p_\mathcal{E})$, which depends smoothly on $u_0$.
\end{itemize}
\end{prop}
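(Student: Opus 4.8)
The plan is to recast~\eqref{LLL} as an ordinary differential equation in the Banach space $L^p_\mathcal E$ (resp. $L^{p,\alpha}_\mathcal E$) and to solve it by the abstract Cauchy--Lipschitz (Picard) theorem. Since~\eqref{LLL} carries no linear term, its integral formulation is simply
$$u(t) = u_0 - i\int_0^t \Pi\big(|u(s)|^2u(s)\big)\,ds,$$
so everything reduces to showing that the vector field $F(u) := \Pi(|u|^2u)$ is a smooth, locally Lipschitz map of the relevant Banach space into itself.

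First I would show $F$ maps $L^p_\mathcal E$ into itself and estimate it. Introduce the map $T(u_1,u_2,u_3) = \Pi(u_1\overline{u_2}u_3)$, complex-linear in $u_1,u_3$ and antilinear in $u_2$, so that $F(u) = T(u,u,u)$. By Proposition~\ref{prop22}, $\Pi$ is bounded on $L^p$ with range in $L^p_\mathcal E$, so it suffices to bound $\|u_1\overline{u_2}u_3\|_{L^p}$. By H\"older with three factors in $L^{3p}$ and then the hypercontractive estimate~\eqref{hypercontractivity} (which gives $\|u\|_{L^{3p}}\lesssim\|u\|_{L^p}$ for $u\in\mathcal E$, since $3p\ge p$), one obtains
$$\|T(u_1,u_2,u_3)\|_{L^p}\lesssim \|u_1\|_{L^p}\|u_2\|_{L^p}\|u_3\|_{L^p}.$$
Thus $T$ is a bounded (real-)trilinear map on $L^p_\mathcal E$, $F=T(u,u,u)$ is a cubic polynomial map, hence $C^\infty$, and the trilinear telescoping identity $F(u)-F(v)=T(u-v,u,u)+T(v,u-v,u)+T(v,v,u-v)$ yields the local Lipschitz bound $\|F(u)-F(v)\|_{L^p}\lesssim(\|u\|_{L^p}^2+\|v\|_{L^p}^2)\|u-v\|_{L^p}$. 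The Cauchy--Lipschitz theorem for ODEs in Banach spaces then produces, for each $u_0$, a unique solution on some interval $[0,T]$, with the flow depending smoothly on $u_0$ since $F$ is smooth. This proves~$(i)$.

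For the weighted spaces in~$(ii)$ the same scheme applies once we have the weighted analogue of hypercontractivity, $\|u\|_{L^{3p,\alpha}}\lesssim\|u\|_{L^{p,\alpha}}$ for $u\in\mathcal E$ --- and I expect this to be the only genuinely new point, as the Carlen inequality~\eqref{hypercontractivity} is stated for unweighted norms only. To obtain it, I would use the reproducing representation $u(z)=\int_\C K(z,w)u(w)\,dL(w)$ together with the Gaussian bound $|K(z,w)|\le\frac1\pi e^{-|z-w|^2/2}$ from Proposition~\ref{prop22}, which gives the pointwise inequality $|u(z)|\le\frac1\pi\big(e^{-|\cdot|^2/2}\ast|u|\big)(z)$. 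Multiplying by $\langle z\rangle^\alpha$ and using $\langle z\rangle^\alpha\lesssim\langle w\rangle^\alpha+\langle z-w\rangle^\alpha$, one bounds $\langle z\rangle^\alpha|u(z)|$ by the convolution of the integrable kernel $\langle z\rangle^\alpha e^{-|z|^2/2}$ with $\langle\cdot\rangle^\alpha|u|$; Young's convolution inequality (the kernel lying in every $L^r$) then yields $\|\langle z\rangle^\alpha u\|_{L^q}\lesssim\|\langle z\rangle^\alpha u\|_{L^p}$ for all $q\ge p$, i.e. the desired weighted hypercontractivity.

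With this in hand, I would repeat the trilinear estimate, placing the full weight on a single factor: $\|\langle z\rangle^\alpha u_1\overline{u_2}u_3\|_{L^p}\le\|\langle z\rangle^\alpha u_1\|_{L^{3p}}\|u_2\|_{L^{3p}}\|u_3\|_{L^{3p}}$, controlling the first factor by $\|u_1\|_{L^{p,\alpha}}$ via weighted hypercontractivity and the other two by $\|u_i\|_{L^p}\lesssim\|u_i\|_{L^{p,\alpha}}$ (using $\langle z\rangle^\alpha\ge1$ for $\alpha\ge0$). Since $\Pi$ is bounded on $L^{p,\alpha}$ by Proposition~\ref{prop22}, this shows $F$ is bounded trilinear, hence $C^\infty$ and locally Lipschitz on $L^{p,\alpha}_\mathcal E$, and Cauchy--Lipschitz again gives local well-posedness with smooth dependence on the data. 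The only real subtlety throughout is the weighted hypercontractivity above; the remainder is the routine verification that a bounded cubic nonlinearity generates a well-posed ODE on a Banach space.
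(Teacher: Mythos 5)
Your proposal is correct and follows essentially the same route as the paper: recast~\eqref{LLL} as an ODE in the Banach space $L^{p,\alpha}_\mathcal{E}$, check that $u\mapsto\Pi(|u|^2u)$ is a bounded cubic (hence smooth, locally Lipschitz) map using the boundedness of $\Pi$ from Proposition~\ref{prop22} together with Carlen's $L^p$--$L^q$ estimate~\eqref{hypercontractivity}, and conclude by Picard iteration. The only divergence is in how the weighted trilinear bound is closed: the paper writes $\| \langle z\rangle^\alpha a\overline b c\|_{L^p}\le \|\langle z\rangle^\alpha a\|_{L^p}\|b\|_{L^\infty}\|c\|_{L^\infty}$ and then controls the two unweighted $L^\infty$ norms by the \emph{unweighted} Carlen inequality, whereas you distribute all three factors in $L^{3p}$ and therefore need a weighted hypercontractive estimate $\|\langle z\rangle^\alpha u\|_{L^{3p}}\lesssim\|\langle z\rangle^\alpha u\|_{L^p}$, which you correctly derive from the reproducing identity $u=\Pi u$, the Gaussian kernel bound, and Young's convolution inequality. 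Both closings are valid; the paper's choice of exponents simply avoids the extra lemma, while your version yields the slightly more general weighted $L^p$--$L^q$ embedding as a by-product.
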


\begin{proof} Local well-posedness is obtained from the theory of ordinary differential equations, by observing that the vector field 
$$u\mapsto \Pi (|u|^2u)$$
is smooth on the spaces $L^{p,\alpha }$, $1\leq p\leq \infty , \alpha \geq 0$, with a differential bounded on bounded subsets. This observation uses  successively the boundedness of $\Pi$,  and the $L^p$-$L^q$ estimate \eqref{hypercontractivity},
$$
\| \langle z \rangle^\alpha \Pi \big( a\overline b c\big) \|_{L^p} \lesssim \| \langle z \rangle^\alpha a\overline bc \|_{L^p}= \| \langle z \rangle^\alpha a\|_{L^{p}} \|  b \|_{L^{\infty}}\| c\| _{L^\infty} \lesssim \|\langle z \rangle^\alpha a\|_{L^p} \|\langle z \rangle^\alpha b\|_{L^p}\|\langle z \rangle^\alpha c\|_{L^p}.
$$
\end{proof}

\begin{rem} The space $L^\infty$ is the endpoint space as far as local well-posedness is concerned. Smaller data spaces, such as $L^p$, with $p<\infty$, or $L^{\infty,\alpha}$, with $\alpha>0$, enjoy stronger properties:
\begin{itemize}
\item Smoothing effect: if $u_0\in L^p_\mathcal{E}$, then for any $ \in [0,T]$, $u(t) - u_0 \in L^{\max(1,\frac{p}{3})}_\mathcal{E} \cap L^\infty_\mathcal{E}$.
\item Weak compactness: if $(u_k)$ is a sequence of solutions uniformly bounded in $L^\infty([0,T],L^p)$, there exists a solution $u \in L^\infty([0,T],L^p)$ such that $u_k(t)$ converges weakly in $L^p$ to $u(t)$.
\end{itemize}
The proofs are immediate and we omit them.
\end{rem}

\subsection{Local well-posedness in $(c_k)$ coordinates} Let $\alpha\geq 0$ and $\lambda>0$. Denote $\ell^{\infty,\alpha}$ and $C_\lambda$ the Banach spaces of sequences given by the norms
\begin{equation}\label{defC}
\| (c_k) \|_{\ell^{\infty,\alpha}} = \sup_{k \geq 0} \langle k \rangle^\alpha |c_k| \quad \mbox{and} \quad \| (c_k) \|_{C_\lambda} = \sup_{k \geq 0} \frac{\sqrt{k !}}{\lambda^k} |c_k|.
\end{equation}

\begin{prop} \label{propC}
Using the coordinates $(c_k)$ given by $\dis u = \sum_{k=0}^{+\infty} c_k \phi_k$, the equation~\eqref{LLL} is locally well posed 
\begin{itemize}
\item[(i)] in $\ell^{\infty,\alpha}$ for $\alpha \geq \frac{1}{4}$.
\item[(ii)] in $C_\lambda$ for $\lambda > 0 $.
\end{itemize}
\end{prop}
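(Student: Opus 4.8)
The plan is to read \eqref{LLLc} as an ordinary differential equation $\dot c=-iF(c)$ in the Banach space under consideration, where $F(c)_k$ is the cubic right-hand side of \eqref{LLLc}, and to invoke the Cauchy--Lipschitz (Picard) theorem in Banach spaces, exactly as in the proof of Proposition~\ref{wpLp}. The map $F$ is the restriction to the diagonal of a fixed trilinear map (conjugate-linear in one argument, coming from $\Pi(u\overline u u)$). Hence, once we show that this trilinear map sends $\ell^{\infty,\alpha}$ (resp. $C_\lambda$) boundedly into itself, it follows that $F$ is smooth with differential bounded on bounded sets, and local existence, uniqueness and smooth dependence on the data all follow from the abstract theory. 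So the entire content of the statement is one multilinear estimate for each of the two spaces.

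The first step is to rewrite the kernel. Setting $N=k+\ell=m+n$ and using $N!/(k!\,\ell!)=\binom{N}{k}$, the coefficient factors as
$$\frac{(k+\ell)!}{2^{k+\ell}\sqrt{k!\,\ell!\,m!\,n!}}=2^{-N}\sqrt{\binom{N}{k}\binom{N}{m}}=\sqrt{p^{(N)}_k\,p^{(N)}_m}\,,\qquad p^{(N)}_j:=2^{-N}\binom{N}{j},$$
so that the kernel is a product of square roots of Binomial$(N,\tfrac12)$ weights; this identity drives both estimates. For $C_\lambda$ the computation is then elementary: writing $M=\|c\|_{C_\lambda}$, so that $|c_j|\le M\lambda^j/\sqrt{j!}$, and using $\ell+m+n=k+2\ell$ on the constraint set, one finds
$$\frac{\sqrt{k!}}{\lambda^k}\,|F(c)_k|\le\frac{M^3}{2\pi}\sum_{\ell\ge0}\frac{(k+\ell)!}{2^{k+\ell}\,\ell!}\,\lambda^{2\ell}\sum_{m+n=k+\ell}\frac{1}{m!\,n!}\,.$$
Since $\sum_{m+n=N}\frac{1}{m!\,n!}=\frac{2^N}{N!}$, the inner sum cancels the binomial and the double sum collapses to $\sum_{\ell}\lambda^{2\ell}/\ell!=e^{\lambda^2}$, giving $\|F(c)\|_{C_\lambda}\le\frac{e^{\lambda^2}}{2\pi}\|c\|_{C_\lambda}^3$. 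This proves~(ii).

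The real work is~(i). With $|c_j|\le M\langle j\rangle^{-\alpha}$ one must establish
$$\sup_k\ \langle k\rangle^{\alpha}\sum_{\ell\ge0}\sqrt{p^{(N)}_k}\,\langle\ell\rangle^{-\alpha}\sum_{m+n=N}\sqrt{p^{(N)}_m}\,\langle m\rangle^{-\alpha}\langle n\rangle^{-\alpha}<\infty,\qquad N=k+\ell.$$
I would first bound the inner sum $I(N):=\sum_{m+n=N}\sqrt{p^{(N)}_m}\,\langle m\rangle^{-\alpha}\langle n\rangle^{-\alpha}$ by exploiting the concentration of $p^{(N)}_m$ about $m=N/2$: a local-central-limit bound $p^{(N)}_m\lesssim\langle N\rangle^{-1/2}$ over a window of width $\sim\sqrt N$, together with the fact that on that bulk one has $m\approx n\approx N/2$ and hence $\langle m\rangle^{-\alpha}\langle n\rangle^{-\alpha}\approx\langle N\rangle^{-2\alpha}$, yields $I(N)\lesssim\langle N\rangle^{\frac14-2\alpha}$. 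One then sums over $\ell$ (equivalently over $N\ge k$) against $\langle\ell\rangle^{-\alpha}\sqrt{p^{(N)}_k}$, using that, at fixed $k$, the weight $p^{(N)}_k$ concentrates around $N\approx 2k$ with width $\sim\sqrt k$. A bookkeeping of exponents gives a total power $\langle k\rangle^{\frac12-3\alpha}$; multiplying by $\langle k\rangle^{\alpha}$ leaves $\langle k\rangle^{\frac12-2\alpha}$, which is bounded precisely when $\alpha\ge\frac14$, matching the stated threshold.

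The main obstacle is this endpoint estimate. A crude Cauchy--Schwarz on $I(N)$ — bounding $\sum_m p^{(N)}_m=1$ and splitting off the two weights — loses the critical factor $\langle N\rangle^{1/4}$ and only yields well-posedness for $\alpha\ge\frac38$; recovering the sharp value $\alpha=\frac14$ forces one to use the genuine Gaussian profile of $p^{(N)}_m$, namely both the peak height $\langle N\rangle^{-1/2}$ and the $\sqrt N$ width, together with the fact that the weights $\langle m\rangle^{-\alpha}\langle n\rangle^{-\alpha}$ are smallest exactly where the mass of $p^{(N)}_m$ sits. I would make this rigorous through uniform Gaussian upper bounds on $\binom{N}{j}2^{-N}$ obtained from Stirling's formula, splitting each sum into a bulk region $|m-\tfrac N2|\lesssim\sqrt N\log N$, where the weights are essentially constant, and a tail controlled by the exponential decay of the binomial weights. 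Carrying the same bulk/tail splitting through the outer sum over $N$ then closes the estimate at $\alpha=\frac14$.
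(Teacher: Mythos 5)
Your proposal is correct and follows essentially the same route as the paper: part (ii) is the identical binomial-collapse computation yielding $e^{\lambda^2}$, and for part (i) your Gaussian/local-CLT bounds on $2^{-N}\binom{N}{j}$ obtained from Stirling, with the inner sum over $m$ concentrated near $N/2$ followed by the outer sum concentrated near $N\approx 2k$, is exactly the paper's argument (its factor $\psi(k/S)^S\,\langle S\rangle^{1/4}\langle k\rangle^{-1/4}\langle S-k\rangle^{-1/4}$ with $\psi(x)\le e^{-C(x-1/2)^2}$ is precisely your Gaussian profile), leading to the same exponent count $\langle k\rangle^{1/2-3\alpha}\lesssim\langle k\rangle^{-\alpha}$ at the threshold $\alpha\ge\tfrac14$.
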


\begin{rem} The spaces $\ell^{\infty, 1/4}$ and $C_\lambda$ are of particular relevance, as will become clear in the remainder in this article. Roughly speaking, they are, in $(c_n)$ coordinates, the largest and the smallest space for which local well-posedness holds.
\end{rem}

\begin{proof} $(i)$ Recall that the equation~\eqref{LLL} written in $(c_n)$ coordinates reads
\begin{align*}
i \partial_t c_k & = \frac{1}{2 \pi}\sum_{\substack{ \ell,m,n \geq 0 \\ k + \ell = m + n }} \frac{(k+\ell)!}{2^{k+\ell} \sqrt{k! \ell ! m! n!}} \overline{c_\ell} c_m c_n \\
& = \frac{1}{2 \pi} \sum_{S=k}^\infty \sum_{m=0}^S \sqrt{\frac{S!}{2^S k! (S-k)!}}\sqrt{\frac{S!}{2^S m! (S-m)!}}\overline{c_{S-k}} c_m c_{S-m} \\
& =: \TT(c,c,c).
\end{align*}
We need some bounds on the interaction coefficients: Stirling's formula gives the inequality
$$
\sqrt{\frac{S!}{2^S k! (S-k)!}} \lesssim \psi\left( \frac{k}{S} \right)^S  \frac{\langle S \rangle^{1/4}}{\langle k \rangle^{1/4} \langle S - k \rangle^{1/4}},
$$
where we denote, if $0<x<1$, $\displaystyle \psi(x) = \sqrt{\frac{1}{2 x^x (1-x)^{1-x}}}$. One checks that $\psi(x)$ takes values in $(0,1$), is  equal to $1$ only if $x=\frac{1}{2}$, and satisfies the bound $|\psi(x)| \leq e^{-C(x-\frac{1}{2})^2}$.

In order to prove the proposition, it suffices to show that $\TT$ maps $(\ell^{\infty,\alpha})^3 \to \ell^{\infty,\alpha}$, which would follow from the inequality 
$$
\Sigma(k) \lesssim \frac{1}{\langle k \rangle^{\alpha}},
$$
where
$$
\Sigma(k) = \sum_{S=k}^{+\infty} \sum_{m=0}^S  \psi\left( \frac{k}{S} \right)^S  \psi\left( \frac{m}{S} \right)^S \frac{\langle S \rangle^{1/2}}{\langle k \rangle^{1/4} \langle S - k \rangle^{1/4}\langle m \rangle^{1/4} \langle S - m \rangle^{1/4}} \frac{1}{\langle S - k \rangle^{\alpha}\langle m \rangle^{\alpha} \langle S - m \rangle^{\alpha}}.
$$
In order to prove that this inequality holds, we first consider the sum over $m$:
$$
\sum_{m=0}^S  \psi\left( \frac{m}{S} \right)^S \frac{1}{\langle m \rangle^{\frac{1}{4}+\alpha} \langle S-m \rangle^{\frac{1}{4}+\alpha}} \lesssim \sum_{m=0}^S  e^{-CS(\frac{m}{S} - \frac{1}{2})^2} \frac{1}{\langle m \rangle^{\frac{1}{4}+\alpha} \langle S-m \rangle^{\frac{1}{4}+\alpha}} \lesssim \frac{1}{\langle S \rangle^{2\alpha}}.
$$
It remains to sum over $S$:
$$
\Sigma(k) \lesssim \sum_{S=k}^{+\infty}  e^{-CS(\frac{k}{S} - \frac{1}{2})^2} \frac{\langle S \rangle^{\frac{1}{2}-2\alpha} }{\langle k \rangle^{1/4} \langle S-k \rangle^{\frac{1}{4}+\alpha}} \lesssim \langle k \rangle^{\frac{1}{2}-3\alpha} \lesssim \langle k \rangle^{-\alpha},
$$
where the last inequality follows from $\alpha \geq \frac{1}{4}$.

\bigskip

\noindent $(ii)$ Proceeding as in the previous point, it suffices to show that
$$
\sup_{k \geq 0} \frac{\sqrt{k!}}{\lambda^k}\sum_{\substack{ \ell,m,n \geq 0 \\ k + \ell = m + n }} \frac{(k+\ell)!}{2^{k+\ell} \sqrt{k! \ell ! m! n!}} \frac{\lambda^{m+n+\ell}}{\sqrt{\ell!} \sqrt{m!} \sqrt{n!} }= \sup_{k \geq 0} \sum_{k+\ell = m + n} \frac{\lambda^{2 \ell}(k+\ell)!}{2^{k+\ell}  \ell ! m! n!} < \infty.
$$
Setting $p = k +\ell$, this can also be written, using the binomial identity,
\begin{multline*}
\sup_{k \geq 0} \sum_{p \geq k} \sum_{n \leq p}  \frac{\lambda^{2 (p-k)}p!}{2^{p}  (p-n)! n! (p-k)!} 
=\sup_{k \geq 0} \sum_{p \geq k} \frac{\lambda^{2 (p-k)}}{(p-k)!}  \sum_{n \leq p}  \frac{p!}{2^{p}  (p-n)! n!} 
=  \sup_{k \geq 0} \sum_{p \geq k} \frac{\lambda^{2 (p-k)}}{(p-k)!} =e^{\lambda^2},
\end{multline*}hence the result.
\end{proof}

The following lemma shows how the critical spaces in $z$ space ($L^\infty$) and $(c_n)$ space ($\ell^{\infty,1/4}$) are related.

\begin{lem} $(c_n) \in \ell^{\infty,1/4}$,
\begin{equation}\label{ineq24}
\left\| \sum_{n=0}^{+\infty} c_n \varphi_n \right\|_{L^\infty(\C)} \lesssim \| (c_n) \|_{\ell^{\infty,1/4}}.
\end{equation}
\end{lem}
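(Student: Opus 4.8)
The plan is to reduce the inequality to a uniform bound on an explicit power series in $r=|z|$ and then to exploit the concentration of a Poisson distribution. First I would insert the explicit formula $\varphi_n(z)=\frac{1}{\sqrt{\pi n!}}z^ne^{-|z|^2/2}$ and the hypothesis $|c_n|\le\|(c_n)\|_{\ell^{\infty,1/4}}\langle n\rangle^{-1/4}$, which yields, with $r=|z|$,
$$\Big|\sum_{n=0}^{+\infty}c_n\varphi_n(z)\Big|\le\frac{\|(c_n)\|_{\ell^{\infty,1/4}}}{\sqrt\pi}\,e^{-\frac{r^2}{2}}\sum_{n=0}^{+\infty}\frac{r^n}{\langle n\rangle^{1/4}\sqrt{n!}}.$$
Thus the whole statement reduces to the scalar estimate $\sup_{r\ge0}F(r)<\infty$, where $F(r):=e^{-r^2/2}\sum_{n}\frac{r^n}{\langle n\rangle^{1/4}\sqrt{n!}}$.

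The crucial remark is that $b_n(r)^2:=e^{-r^2}\frac{r^{2n}}{n!}$ is the mass at $n$ of the Poisson law with parameter $r^2$; hence $\sum_n b_n(r)^2=1$ and $F(r)=\sum_n b_n(r)\langle n\rangle^{-1/4}$. For $r\le1$ this is at most $\sum_n 1/\sqrt{n!}<\infty$, so only large $r$ matters. Since the Poisson$(r^2)$ law concentrates near $n\approx r^2$ at scale $r$, on the central window $J_0=\{n:|n-r^2|\le r\}$ one has $\langle n\rangle^{-1/4}\lesssim r^{-1/2}$, and Cauchy--Schwarz gives $\sum_{J_0}b_n\langle n\rangle^{-1/4}\lesssim r^{-1/2}|J_0|^{1/2}(\sum_{J_0}b_n^2)^{1/2}\lesssim r^{-1/2}\cdot r^{1/2}\cdot 1=O(1)$. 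I would then treat the outer regions by windows $J_j=\{n\ge r^2/2:\ jr<|n-r^2|\le(j+1)r\}$, $j\ge1$, on each of which $\langle n\rangle^{-1/4}\lesssim r^{-1/2}$ again, while $\sum_{J_j}b_n^2\le\mathbb{P}(|N-r^2|>jr)\lesssim e^{-cj^2}$ by the sub-Gaussian tail of the Poisson distribution; Cauchy--Schwarz on $J_j$ gives $\sum_{J_j}b_n\langle n\rangle^{-1/4}\lesssim r^{-1/2}\cdot r^{1/2}\cdot e^{-cj^2/2}$, and summing the resulting geometric-type series over $j\ge1$ contributes another $O(1)$. The remaining deep lower tail $\{n<r^2/2\}$ is bounded directly by $\sum_{n<r^2/2}b_n\le(r^2/2)^{1/2}\,\mathbb{P}(N<r^2/2)^{1/2}\lesssim r\,e^{-cr^2}\to0$.

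Combining the three contributions gives $F(r)\le C$ uniformly in $r$, which is the claim. I expect the main obstacle to be precisely that a naive global Cauchy--Schwarz fails: the weight $\langle n\rangle^{-1/4}$ is just barely not square-summable against the Poisson masses (it produces a spurious factor $r^{1/2}$), so one is forced to use the concentration of the Poisson law. The technical heart of the argument is therefore the windowed estimate together with the quantitative sub-Gaussian tail bound $\mathbb{P}(|N-r^2|>jr)\lesssim e^{-cj^2}$ that makes the sum over windows converge.
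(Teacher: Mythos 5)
Your argument is correct in substance, but it takes a genuinely different route from the paper. The paper's proof is a two-line trick: it expands $e^{-\frac12|z|^2+\frac12 z^2}=\sum_n\frac{\sqrt{\pi(2n)!}}{2^n n!}\phi_{2n}(z)$, notes that on the positive real axis this function equals $1$ while all terms are nonnegative and the coefficients are $\sim c\,n^{-1/4}$, and concludes immediately that $\sup_{r\ge0}\sum_n\phi_{2n}(r)(n+1)^{-1/4}<\infty$ (the odd modes being absorbed via $|\phi_{2n+1}|\le|\phi_{2n}|+|\phi_{2n+2}|$). In your notation this identity is precisely the normalization $\sum_n p_n=1$ of the Poisson$(r^2/2)$ law, so the two proofs rest on the same arithmetic coincidence between the weight $n^{-1/4}$ and the ratio of $1/\sqrt{n!}$ to $1/n!$; but where the paper finds an exact closed form and uses positivity, you run a window decomposition, Cauchy--Schwarz on each window, and a concentration estimate. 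Your version is longer but more robust: it would survive replacing $\langle n\rangle^{-1/4}$ by any weight that is $O(n^{-1/4})$ on the bulk of the Poisson distribution, whereas the paper's identity is rigid. Your diagnosis that a single global Cauchy--Schwarz loses a factor $r^{1/2}$, so that $\alpha=1/4$ is exactly critical, is also the right way to see why the exponent $1/4$ is sharp (the paper makes the same point with the example $u_n=n^{-1/4}\phi_n$).

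One quantitative claim needs repair: the Poisson law is not sub-Gaussian at all scales, and the bound $\mathbb{P}(|N-r^2|>jr)\lesssim e^{-cj^2}$ fails once $j\gg r$ (in the far upper tail the decay is only Poissonian, $\exp(-cjr\log(j/r))$). The Bernstein--Chernoff inequality gives $\mathbb{P}(|N-r^2|>jr)\le 2\exp\bigl(-c\min(j^2,jr)\bigr)$, which for $r\ge1$ is at most $e^{-cj}$; this is still summable over $j$, so your series over windows converges and the proof closes. You should either restrict the Gaussian-type bound to $j\lesssim r$ and treat the far upper tail with the Poissonian bound, or simply quote the Bernstein form; as written, the phrase ``sub-Gaussian tail'' overstates what is true, even though the architecture of the argument is sound.
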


\begin{proof} First observe that
$$ e^{-\frac{1}{2}|z|^2 + \frac{1}{2} z^2} = \sum_{n=0}^{+\infty} \frac{\sqrt{\pi (2n)!}}{2^n n!} \phi_{2n}(z),
$$
which implies since $\frac{\sqrt{\pi (2n)!}}{2^n n!} \sim \frac{(2\pi)^{1/4}}{2 n^{1/4}}$ that
\begin{equation*}
 \sup_{z\in \C} \,\sum_{n=0}^{+\infty} \frac{ \phi_{2n}(|z|)}{(n+1)^{1/4}} <\infty.
\end{equation*}
Using this inequality and $ |\phi_{2n+1}| \leq  |\phi_{2n}|+ |\phi_{2(n+1)}|$, this gives for $\dis u=\sum_{n=0}^{+\infty} c_n \varphi_n$ with $(c_n) \in \ell^{\infty,1/4}$ that 
\begin{equation*}
 \sup_{z\in \C} \,|u(z)| \lesssim  \sup_{z\in \C} \, \sum_{n=0}^{+\infty}\frac{\phi_{n}(|z|)}{ (n+1)^{1/4}}\lesssim  \sup_{z\in \C} \,\sum_{n=0}^{+\infty}\frac{\phi_{2n}(|z|)}{ (n+1)^{1/4}}+  \sup_{z\in \C} \,\sum_{n=0}^{+\infty}\frac{\phi_{2n+1}(|z|)}{ (n+1)^{1/4}} < \infty.
\end{equation*}
\end{proof}
Notice that the reverse inequality in \eqref{ineq24} does not hold true, as can be seen by considering  the sequence $u_n=n^{-1/4} \phi_n$.

\subsection{Global well-posedness} The conservation of $M$ and $\mathcal{H}$ combined with the local well-posedness in $L^2_{\mathcal{E}}$ and $L^4_{\mathcal{E}}$ easily leads to

\begin{prop}
Assume that $2\leq p\leq 4$. The equation~\eqref{LLL} is globally well-posed for data in~$L_{\mathcal E}^p$ and such data lead to solutions in $\mathcal{C}^\infty \big(\R,L_{\mathcal E}^p\big)$, depending smoothly on the initial data. 

Moreover, for $u_0 \in L^p_{\mathcal{E}}$, 
\begin{equation}\label{bound1}
 \|u(t)-u_0\|_{L^p(\C)} \lesssim |t|^{4/p-1},\qquad  \|u(t)-u_0\|_{L^2(\C)} \leq C|t| , \qquad \forall t\in \R.
\end{equation}
\end{prop}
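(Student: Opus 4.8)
The plan is to upgrade the local theory of Proposition~\ref{wpLp} to a global one by producing an \emph{a priori} bound that is at most linear in time, and then to read off the two rates from conservation of the $L^4$ norm together with interpolation. First I would observe that if $u_0\in L^p_{\mathcal E}$ with $2\le p\le 4$, then the hypercontractivity embedding \eqref{hypercontractivity} gives $u_0\in L^4_{\mathcal E}$, so that $\mathcal H(u_0)=\tfrac14\|u_0\|_{L^4}^4<\infty$. The crucial conservation law is that $\mathcal H$, equivalently the $L^4$ norm, is preserved along the flow: differentiating $\mathcal H(u(t))$ and inserting $\partial_t u=-i\Pi(|u|^2u)$, the self-adjointness and idempotency of $\Pi$ yield $\tfrac{d}{dt}\mathcal H(u)=\Re\big(i\langle |u|^2u,\Pi(|u|^2u)\rangle\big)=0$, the pairing being a well-defined real number because $u\in L^4\hookrightarrow L^6$ forces $|u|^2u\in L^2$. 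Hence $\|u(t)\|_{L^4}=\|u_0\|_{L^4}=:A$ on the whole interval of existence (for genuine $L^2$ data one gets in addition $\|u(t)\|_{L^2}=\|u_0\|_{L^2}$ from Theorem~\ref{mainCauchy}).

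The heart of the matter is a smoothing estimate. Writing Duhamel's formula $u(t)-u_0=-i\int_0^t\Pi(|u|^2u)(s)\,ds$ and using that $\Pi$ is bounded on $L^{4/3}$ (Proposition~\ref{prop22}), I would estimate $\|\Pi(|u|^2u)\|_{L^{4/3}}\lesssim\||u|^2u\|_{L^{4/3}}=\|u\|_{L^4}^3=A^3$, so that $\|u(t)-u_0\|_{L^{4/3}}\lesssim A^3|t|$. Since $4/3\le p$, \eqref{hypercontractivity} gives $\|u(t)-u_0\|_{L^p}\lesssim\|u(t)-u_0\|_{L^{4/3}}\lesssim A^3|t|$, whence $\|u(t)\|_{L^p}\le\|u_0\|_{L^p}+CA^3|t|$. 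This at most linear growth forbids finite-time blow-up of the $L^p$ norm, so the local solution of Proposition~\ref{wpLp} extends to all of $\R$; the smoothness in $t$ and the smooth dependence on $u_0$ are then inherited from the smoothness of the cubic vector field $u\mapsto\Pi(|u|^2u)$ by the usual ODE bootstrap.

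It remains to sharpen the rates. The $L^4$ estimate is immediate: $\|u(t)-u_0\|_{L^4}\le\|u(t)\|_{L^4}+\|u_0\|_{L^4}=2A$, which is exactly the claim since $4/p-1=0$ at $p=4$. For the $L^2$ estimate I would again use Duhamel, now with $\Pi$ bounded on $L^2$ and hypercontractivity in the form $\|u\|_{L^6}\lesssim\|u\|_{L^4}$: indeed $\|\Pi(|u|^2u)\|_{L^2}\le\||u|^2u\|_{L^2}=\|u\|_{L^6}^3\lesssim A^3$, giving $\|u(t)-u_0\|_{L^2}\le C|t|$. Finally, for $2<p<4$ the difference $u(t)-u_0$ lies in $L^{4/3}_{\mathcal E}$ by the smoothing step, hence in $L^2\cap L^4$, so the log-convexity inequality $\|w\|_{L^p}\le\|w\|_{L^2}^{\theta}\|w\|_{L^4}^{1-\theta}$ with $\theta=4/p-1$ combines the two previous bounds into $\|u(t)-u_0\|_{L^p}\lesssim(C|t|)^{4/p-1}(2A)^{2-4/p}\lesssim|t|^{4/p-1}$.

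The main obstacle is making conservation of $\mathcal H$ rigorous at the $L^4$ level of regularity rather than merely formally, since Section~\ref{Sect2} establishes the Hamiltonian structure only formally; the saving grace is that hypercontractivity upgrades $L^4$ to $L^6$, which is precisely what is needed to justify the pairing $\langle|u|^2u,\Pi(|u|^2u)\rangle$ and the differentiation of $t\mapsto\mathcal H(u(t))$. Everything else reduces to the boundedness of $\Pi$ on $L^q$ (Proposition~\ref{prop22}) and repeated use of \eqref{hypercontractivity}.
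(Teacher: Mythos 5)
Your proof is correct and follows essentially the same route as the paper: conservation of the $L^4$ norm, boundedness of $\Pi$, the Carlen embedding \eqref{hypercontractivity}, Duhamel for the $L^2$ rate, and interpolation between $L^2$ and $L^4$ for the $L^p$ rate. The only (harmless) variation is in the globalization step, where you derive linear growth of $\|u(t)\|_{L^p}$ from an $L^{4/3}$ smoothing bound, whereas the paper notes directly that $\|\Pi(|u|^2u)\|_{L^p}\lesssim \|u\|_{L^4}^2\|u\|_{L^p}$ makes the local lifespan depend only on the conserved $L^4$ norm.
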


\begin{proof}
We already know local well-posedness from Proposition \ref{wpLp}. Furthermore, using successively the boundedness of $\Pi$ (Proposition~\ref{prop22}), H\"older's inequality, and\;\eqref{hypercontractivity},
$$
\big\| \Pi (|u|^2 u) \big\|_{L^p} \leq C_1 \big\| |u|^2 u \big\|_{L^p}= C_1 \| u \|_{L^{3p}}^3 \leq  C_2 \|u\|_{L^4}^2\|u\|_{L^p}.
$$
The previous inequality shows that the lifespan of the solution only depends on the $L^4$ norm which is preserved, hence we get global  well-posedness.

Let us now prove the bound \eqref{bound1}.  We write $u=u_0+v$, then for $t \geq 0$ we have
$$v(t)=-i\int_0^t\Pi\big[|u_0+v|^2(u_0+v)\big](s)ds.$$
We take the $L^2$-norm and get with the help of \eqref{hypercontractivity}
$$\|v(t)\|_{L^2(\C)} \leq C_1t \|u_0+v\|^3_{L^6(\C)}\leq C_2 t(\|u_0\|^3_{L^6(\C)}+\|v\|^3_{L^6(\C)}) \leq C_3 t(\|u_0\|^3_{L^p(\C)}+\|v\|^3_{L^4(\C)}).$$
Therefore, by the conservation of the energy, we obtain $\|v(t)\|_{L^2(\C)} \leq Ct$ which is the second bound. The first bound follows from interpolation with the energy.
\end{proof}

\section{Long time results for the LLL equation}\label{Sect4}

\subsection{Bounds of Sobolev norms}

Recall that the $ L^{2,k}_\mathcal{E}$-norm is equivalent to the Sobolev {$\HH ^k(\C)$-norm} (see Lemma~\ref{lemEq}). Then we have the following bounds on the growth of such norms.

\begin{thm}
Let $k\geq 1$ be an integer and $u_0\in L^{2,k}_\mathcal{E}$. The equation \eqref{LLL} is globally well-posed in $L^{2,k}_\mathcal{E}$ and for any $t$,
\begin{equation}\label{bornehk}
\|u(t)\|_{L^{2,k}(\C)}\lesssim (1+|t|)^{\frac{k-1}2}.
\end{equation}
\end{thm}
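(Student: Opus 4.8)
The goal is to show $\|u(t)\|_{L^{2,k}(\C)}\lesssim (1+|t|)^{(k-1)/2}$ for the solution of~\eqref{LLL}. Since the $L^{2,k}_\mathcal{E}$-norm is equivalent to $\|\langle z\rangle^k u\|_{L^2}$, and since multiplication by $\langle z\rangle^k$ is comparable to applying powers of the operator $z$ (together with its adjoint), my plan is to control the growth of $\|z^k u(t)\|_{L^2}$ by deriving a differential inequality for this quantity and then bootstrapping in $k$. The key structural fact I would exploit is that on $\mathcal{E}$ the weight $z$ interacts with the equation in a controlled way: since $u = e^{-|z|^2/2}f$ with $f$ holomorphic, we have $z u \approx \Pi(zu)$ up to lower-order terms, and $\Pi$ is bounded on every $L^{p,\alpha}$ by Proposition~\ref{prop22}. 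In particular I expect the commutator of $z$ with the nonlinearity $\Pi(|u|^2u)$ to be the crucial object.

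\textbf{The inductive scheme.}

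The plan is to argue by induction on $k$. The base case $k=0$ is just the conservation of mass $M(u)=\|u\|_{L^2}^2$, giving a uniform (i.e.\ $(1+|t|)^{-1/2}$-better-than-claimed) bound; the case $k=1$ is the conservation of $P(u)$ and $Q(u)$ from Theorem~\ref{mainCauchy}, which gives $\|\langle z\rangle u(t)\|_{L^2}$ bounded uniformly in $t$, matching the exponent $(k-1)/2=0$. For the inductive step I would compute
$$\frac{d}{dt}\int_\C |z|^{2k}|u|^2\,dL = 2\,\Im \int_\C |z|^{2k}\,\overline{u}\,\Pi(|u|^2u)\,dL,$$
and the aim is to bound the right-hand side by $\|z^k u\|_{L^2}$ times lower-order weighted norms already controlled by the inductive hypothesis. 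Concretely, I would move one factor of $\langle z\rangle^k$ off $\overline u$ and onto the nonlinearity using self-adjointness and the near-commutation of $z$ with $\Pi$ on $\mathcal{E}$, then estimate $\|\langle z\rangle^k \Pi(|u|^2u)\|_{L^2}$ via Proposition~\ref{prop22} and the hypercontractivity estimate~\eqref{hypercontractivity}, distributing the $k$ weights across the three factors of $u$ so that at most one carries the full weight $\langle z\rangle^k$ and the remaining two are bounded in lower-weighted norms.

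\textbf{Closing the estimate and the main obstacle.}

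If the commutator analysis yields $\frac{d}{dt}\|z^k u\|_{L^2}^2 \lesssim \|z^k u\|_{L^2}\cdot(\text{lower-order terms})$, where the lower-order factor is controlled by the inductive hypothesis as $(1+|t|)^{(k-2)/2}$, then dividing by $\|z^k u\|_{L^2}$ gives $\frac{d}{dt}\|z^k u\|_{L^2}\lesssim (1+|t|)^{(k-2)/2}$, and integrating in time produces exactly $(1+|t|)^{(k-1)/2}$, closing the induction. The main obstacle I anticipate is the commutator estimate: controlling $[\langle z\rangle^k,\Pi](|u|^2u)$ and ensuring that each time a weight is transferred through $\Pi$ or distributed across the cubic term, one does not pick up a factor carrying the full weight on more than one factor of $u$. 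This hinges on the algebra of $z,\partial_z$ acting on $\mathcal{E}$ (where $\langle z\rangle$ is essentially equivalent to the number operator raised to power $1/2$) and on the Gaussian off-diagonal decay $|K(z,w)|\leq \frac{1}{\pi}e^{-|z-w|^2/2}$ of the projector kernel, which guarantees that shifting weights through $\Pi$ costs only bounded factors rather than genuine derivatives. Making this transfer precise — i.e.\ verifying that the worst term genuinely involves only the inductively-bounded norm times $\|z^k u\|_{L^2}$ to the first power — is where the real work lies.
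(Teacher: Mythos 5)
There is a genuine gap in the way you plan to close the induction, and it is quantitative: the differential inequality you aim for does not integrate to the stated rate. If $\frac{d}{dt}\|z^k u\|_{L^2}^2 \lesssim \|z^k u\|_{L^2}\cdot(1+|t|)^{(k-2)/2}$, then dividing and integrating gives $\|z^k u(t)\|_{L^2}\lesssim (1+|t|)^{\frac{k-2}{2}+1}=(1+|t|)^{k/2}$, not $(1+|t|)^{(k-1)/2}$ — already at $k=2$ your scheme yields $(1+|t|)$ instead of $(1+|t|)^{1/2}$. Worse, the induction is then not self-consistent: feeding the rate $a_{k-1}$ into your step produces $a_k=a_{k-1}+1$, i.e.\ $\|z^ku\|_{L^2}\lesssim (1+|t|)^{k-1}$, which is the square of the claimed bound. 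The culprit is your decision to keep a full factor $\|z^k u\|_{L^2}$ on the right-hand side after Cauchy--Schwarz: any distribution of the weight $\langle z\rangle^{2k}$ that leaves $\langle z\rangle^k$ on $\overline u$ forces the remaining cubic factor to be too large by half a power of $t$.

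The missing idea is to peel off only \emph{one} power of $z$ onto $\overline u$, writing
$\frac{d}{dt}\int |z|^{2k}|u|^2\,dL = 2\,\Im\int z|z|^{2(k-1)}\,\overline{zu}\,\Pi(|u|^2u)\,dL
\lesssim \|zu\|_{L^2}\,\big\||z|^{2k-1}\Pi(|u|^2u)\big\|_{L^2}$,
where $\|zu\|_{L^2}$ is uniformly bounded by conservation of $M$ and $P$. All $2k-1$ remaining weights land on the cubic term; by the boundedness of $\Pi$ on weighted spaces and Carlen's inequality \eqref{hypercontractivity} one gets $\big\||z|^{2k-1}\Pi(|u|^2u)\big\|_{L^2}\lesssim \|zu\|_{L^2}\|z^{k-1}u\|_{L^2}^2+\|u\|_{L^2}^3$, so that \emph{no} factor of $\|z^k u\|_{L^2}$ survives on the right. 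From $\frac{d}{dt}\|z^ku\|_{L^2}^2\lesssim \|z^{k-1}u\|_{L^2}^2+1$ your induction then does close ($\|z^{k-1}u\|_{L^2}^2\lesssim(1+|t|)^{k-2}$ gives $\|z^ku\|_{L^2}^2\lesssim(1+|t|)^{k-1}$); the paper avoids induction altogether by interpolating $\|z^{k-1}u\|_{L^2}^2\lesssim\|zu\|_{L^2}^{2/(k-1)}\|z^ku\|_{L^2}^{2-2/(k-1)}$ and integrating the resulting ODE $\frac{d}{dt}X^2\leq CX^{2-2/(k-1)}$. Incidentally, your concern about commutators $[\langle z\rangle^k,\Pi]$ is unnecessary: the Gaussian kernel bound (Proposition \ref{prop22}) already gives $\|\langle z\rangle^\alpha\Pi v\|_{L^2}\lesssim\|\langle z\rangle^\alpha v\|_{L^2}$ directly, which is all that is used.
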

\begin{proof} The global wellposedness in $L^{2,k}_\mathcal{E}$ easily follows from the  global wellposedness in $L^{2}_\mathcal{E}$. To get the bound, we compute
\begin{eqnarray*}
\frac{d}{dt}\int_{\C}|z|^{2k}|u(t,z)|^2dL(z)&=& 2\mathfrak{Re} \int_{\C} |z|^{2k} \ov{u}\partial_t u dL(z)\\
&=& 2\mathfrak{Im} \int_{\C} |z|^{2k} \ov{u}\Pi(|u|^2u) dL(z)\\
&=& 2\mathfrak{Im} \int_{\C} z|z|^{2(k-1)} \ov{zu}\Pi(|u|^2u) dL(z)\\
&\lesssim& \big\| zu\big\|_{L^2(\C)} \big\| |z|^{2k-1} \Pi(|u|^2u)\big\|_{L^2(\C)}.
\end{eqnarray*}
Next, by Proposition~\ref{prop22}
\begin{eqnarray*}
 \big\| |z|^{2k-1} \Pi(|u|^2u)\big\|_{L^2(\C)}  &\lesssim & \big\| \<z\>^{2k-1} |u|^2u\big\|_{L^2(\C)} \\
  &\lesssim & \big\| |z|^{2k-1} |u|^2u\big\|_{L^2(\C)} +C\big\|   |u|^2u\big\|_{L^2(\C)}\\
    &\lesssim &  \big\| zu\big\|_{L^2(\C)}  \big\| z^{k-1} u\big\|^2_{L^{\infty}(\C)}+C\big\|   u\big\|^3_{L^6(\C)}\\
       &\lesssim & \big\| zu\big\|_{L^2(\C)}  \big\| z^{k-1} u\big\|^2_{L^{2}(\C)}+C\big\|   u\big\|^3_{L^2(\C)},
\end{eqnarray*}
where the last line was obtained by the Carlen inequality \eqref{hypercontractivity} (using crucially that $u\in \tilde{\mathcal{E}}$ implies $z^ju \in \tilde{\mathcal{E}}$). Therefore, since $\big\| \<z\>u\big\|_{L^2(\C)}$ is uniformly bounded by conservation of $M$ and $P$, we get by interpolating that
\begin{equation}\label{bornzk}
\frac{d}{dt} \big\| \<z\>^ku\big\|^2_{L^2(\C)} \leq C \big\| \<z\>^ku\big\|^{2-\frac2{k-1}}_{L^2(\C)}.
\end{equation}
Then by a classical argument, \eqref{bornzk} implies $\|\<z\>^ku(t)\|_{L^2(\C)}\leq C (1+|t|)^{\frac{k-1}2}$, which in turn implies\;\eqref{bornehk} by Lemma \ref{lemEq}.
\end{proof}

\begin{rem}
It is interesting to compare this result to the bounds for the 2D  cubic Schr\"odinger equation
$$ i\partial_t u+\Delta_{\R^2}u -(x_1^2+x_2^2)u=|u|^2u, \quad (t,x_1,x_2)\in \R^3.$$
It is likely that with the method developed in \cite{PTV} one gets a bound $\lesssim (1+|t|)^{k-1}$.
\end{rem}

\subsection{Long time results for linear perturbations of the LLL equation}

Here we state some results concerning linear perturbations of the LLL equation which show, under generic assumptions, close-to-linear dynamics. In this setting,    the resonant structure of LLL is destroyed.

\subsubsection{KAM results for a perturbed equation}

 In the sequel, we consider the (non-local) perturbation of the (LLL) equation 
 \begin{equation}\label{LLLM}
i \partial_t u+ \nu \M u= \eps\Pi (|u|^2 u), \quad (t,z)\in \R\times \C,
\end{equation}
where $\nu, \eps>0$ are small and where  $\M$ is the (Hermite) multiplier, defined by $\M \phi_j=\xi_j \phi_j$ with $-1 \leq \xi_j\leq 1$. \medskip

Notice that $\M$ and $H$ commute and that we have the following conservation laws :
$$ \int_{\C}  |u(z)|^2dL(z),\qquad \int_{\C}\ov{u}Hu(z) dL(z),\qquad    \nu\int_{\C} \ov{u}\M u(z) dL(z)+\eps\int_{\C}  |u(z)|^4dL(z),$$
which are the $L^2$ and $L^{2,1}$ norms as well the Hamiltonian (there are other conservation laws).\bigskip

Using the commutation of  $\M$ and $H$, as well as the relation 
$$\e^{it H} \Pi\big(u_1 \ov{u_2} u_3 \big)=\Pi \big(\e^{it H}u_1 \,\ov{\e^{it H}u_2}\,\e^{it H}u_3\big),$$
which can be obtained by testing on $u_j=\phi_j$, we see that \eqref{LLLM} is equivalent to the equation (setting~{$v=\e^{it H}u$})
 \begin{equation} \label{cummut}
i \partial_t v +Hv+ \nu \M v= \Pi (|v|^2 v), \quad (t,z)\in \R\times \C.
\end{equation}~

The abstract KAM result  \cite[Theorem 2.3]{GT} can directly be applied to the equation \eqref{cummut} and hence \eqref{LLLM} (see also \cite[Theorem 6.6]{GT} for a similar statement for the Schr\"odinger equation with harmonic potential).
\begin{thm}\label{KAMLLL}
Let $n\geq 1$ be an integer and set $\mathcal{A}=[-1,1]^{n+1}$.   There exist $\eps_0>0$, $\nu_0>0$, $C_{0}>0$ and,  for each $\eps<\eps_0$, a  Cantor set $\mathcal{A}_{\eps}\subset \mathcal{A}$ of asymptotic  full measure when $\eps \to 0$, such that  for each $\xi\in \mathcal{A}_{\eps}$  and for each $C_{0}\eps\leq \nu<\nu_0$, the solution of 
\begin{equation}\label{nls2}
i \partial_t u+\nu \M u= \eps\Pi (|u|^2 u), \quad (t,z)\in \R\times \C,
\end{equation}
with initial datum
\begin{equation}\label{CI}
u_0(z)=\sum_{j=0}^n I_j^{1/2}e^{i\theta_j}\phi_j(z),
\end{equation}
with  $(I_0,\cdots,I_n)\subset (0,1]^{n+1}$ and $\theta\in \T^{n+1}$,
is quasi periodic with a quasi period $\omega^{\star}$ close to ${\omega_0=(2j+2)_{j=0}^n}$: $|\omega^{\star}-\omega_0|<C\nu$.\\
More precisely, when $\theta$ covers $\T^n$, the set of solutions of \eqref{nls2} with initial datum \eqref{CI} covers a $(n+1)$-dimensional torus which is invariant by \eqref{nls2}. Furthermore this torus is linearly stable.
\end{thm}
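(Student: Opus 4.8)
The plan is to verify that the autonomous equation \eqref{cummut} fits the abstract framework of \cite[Theorem 2.3]{GT} and then to transfer the conclusion back to \eqref{nls2}. First I would pass from \eqref{LLLM}/\eqref{nls2} to \eqref{cummut} through $v=e^{itH}u$, as explained before the statement, and write the resulting Hamiltonian in the coordinates $c=(c_j)_{j\ge 0}$ defined by $v=\sum_j c_j\phi_j$. Since $H\phi_j=2(j+1)\phi_j$ and $\M\phi_j=\xi_j\phi_j$, the quadratic part is diagonal with frequencies $\omega_j(\xi)=2(j+1)+\nu\xi_j$, while the quartic part is the LLL interaction $\TT(c,c,c)$ of \eqref{LLLc}, carrying the small factor $\eps$. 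Thus \eqref{cummut} is a Hamiltonian perturbation, of size $\eps$, of the integrable system $\sum_j\omega_j(\xi)|c_j|^2$, with the tuple $(\xi_0,\dots,\xi_n)\in\mathcal A$ playing the role of external parameters acting on the tangential frequencies, and with effective small parameter $\eps/\nu$.

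Next I would check, one by one, the structural hypotheses of the abstract theorem. The frequency map $\xi\mapsto(\omega_0(\xi),\dots,\omega_n(\xi))$ is affine with invertible linear part $\nu\,\mathrm{Id}$, so the non-degeneracy (transversality) of the frequencies with respect to the parameters is immediate. The normal frequencies $\omega_j(\xi)=2(j+1)+\nu\xi_j$, $j>n$, satisfy the asymptotics and separation $\omega_j\sim 2j$ needed for the second Melnikov conditions. The crucial point is the regularity of the perturbation: I would show that the cubic vector field $c\mapsto\TT(c,c,c)$ is smooth and satisfies tame estimates on the scale of weighted spaces $L^{2,s}_\mathcal{E}$ (equivalently the Sobolev spaces $\HH^s$ of Lemma \ref{lemEq}). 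Here the bounds on the interaction coefficients already obtained in the proof of Proposition \ref{propC}, namely the Gaussian decay encoded by the function $\psi$, provide exactly the decay of the Taylor coefficients of the Hamiltonian required by \cite{GT}. Equally important is the selection rule $k+\ell=m+n$ built into $\TT$: it is nothing but conservation of the angular momentum $P$, and it furnishes the momentum-conservation (T\"oplitz-type) structure that the abstract scheme exploits to keep the small divisors under control.

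The main obstacle, and the heart of the verification, is the measure estimate for the non-resonance conditions. One must show that, after discarding the parameters $\xi$ for which a small divisor $\langle k,\omega(\xi)\rangle$ or $\langle k,\omega(\xi)\rangle\pm\omega_i(\xi)\pm\omega_j(\xi)$ becomes too small along the KAM iteration, the remaining set $\mathcal A_\eps$ still has measure tending to $|\mathcal A|$ as $\eps\to0$. Because each tangential $\omega_j$ depends on its own parameter $\xi_j$ with unit slope (up to the factor $\nu$), the resonant sets are transverse hypersurfaces whose measures are summable thanks to the frequency asymptotics and to the coefficient decay above; the condition $C_0\eps\le\nu<\nu_0$ guarantees that the admissible divisor thresholds are compatible with the size $\eps/\nu$ of the effective perturbation. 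Granting these verifications, \cite[Theorem 2.3]{GT} produces, for every $\xi\in\mathcal A_\eps$, an $(n+1)$-dimensional invariant torus of \eqref{cummut} carrying quasi-periodic solutions with frequency vector $\omega^\star$ satisfying $|\omega^\star-\omega_0|<C\nu$, together with linear stability of the reduced normal dynamics. Finally I would undo the change of variables: since $e^{-itH}$ acts diagonally by $\phi_j\mapsto e^{-2i(j+1)t}\phi_j$, it maps quasi-periodic solutions of \eqref{cummut} to quasi-periodic solutions of \eqref{nls2} with data \eqref{CI}, preserving both the torus structure and the linear stability, which yields the statement.
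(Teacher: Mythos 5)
Your proposal follows exactly the route the paper takes: the authors reduce \eqref{nls2} to the autonomous equation \eqref{cummut} via $v=e^{itH}u$ and then invoke \cite[Theorem 2.3]{GT} directly, which is precisely your strategy (the paper simply does not spell out the verification of the hypotheses that you sketch). Your additional checks of non-degeneracy, frequency asymptotics, tameness and measure estimates are a sensible elaboration of the same argument rather than a different approach.
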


Notice that one already knew that the equation \eqref{LLLM} is globally well-posed for such initial conditions.

\subsubsection{Control of Sobolev norms  for a perturbed equation}  

  We define the   Hermite multiplier~$\M$ by   $\M\phi_j=m_j \phi_j\,,$
where $(m_j)_{j\in \N}$ is a bounded sequence of real numbers chosen in the following classes: for any $k\geq 1$, we define the class
$$
\W_k=\Big\{   (m_j)_{j\in \N} : \; m_j=\frac{\tilde m_j}{(j+1)^k} \;\mbox{ with }\; \tilde m_j\in [-1/2,1/2]\Big\}
$$
which is endowed with the product Lebesgue (probability) measure. Consider the problem
 \begin{equation}\label{LLLM0}
i \partial_t u+   \M u=  \Pi (|u|^2 u), \quad (t,z)\in \R\times \C.
\end{equation}

The following almost global existence result is proved in \cite[Theorem 1.1]{GIP}.
\begin{thm} \label{thFN}
Let $k,r\in \N$. There exists a set $\mathcal{B}_{k}\subset \W_{k}$ of  measure   $1$ such that if $(m_j)_{j\in \N}\in \mathcal{B}_{k}$  there exists   $s_{0}\in\N$ such that  
for any $s\geq s_{0}$,  there are $\eps_{0}>0$, 
$c>0$, such that for any 
$\eps\in (0,\eps_{0})$,  for any  $u_0 \in L^{2,s}_\mathcal{E}$ with 
$$\|u_0\|_{L^{2,s}(\C)} \leq \eps,$$  the equation \eqref{LLLM0} with initial datum\;$u_0$  has a unique global solution $u \in \mathcal{C}^{\infty}\big(\R, L^{2,s}_{\mathcal{E}}\big)$ and it satisfies 
$$ \|u(t)\|_{L^{2,s}(\C)}\leq 2\eps, \quad |t|\leq c \eps^{-r}.$$ 
\end{thm}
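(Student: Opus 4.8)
The plan is to recognise \eqref{LLLM0} as a Hamiltonian perturbation of an integrable system belonging to the abstract class treated in \cite[Theorem 1.1]{GIP}, and then to verify that its two structural hypotheses --- tame multilinearity of the nonlinearity and strong non-resonance of the perturbed frequencies --- hold in the present setting. Once these are checked, the almost global bound is exactly the output of the abstract Birkhoff normal form theorem. First I would set up the Hamiltonian formalism: writing $u=\sum_j c_j\phi_j$, equation \eqref{LLLM0} is the Hamiltonian flow, for the symplectic form $\omega$, of
$$
H(u) = -\tfrac12\sum_{j\geq 0} m_j|c_j|^2 + \mathcal{H}(u),
$$
where $\mathcal{H}$ is the explicit quartic form
$$
\mathcal{H}(u) = \frac{1}{8\pi}\sum_{\substack{k+\ell=m+n}} \frac{(k+\ell)!}{2^{k+\ell}\sqrt{k!\,\ell!\,m!\,n!}}\,\overline{c_k}\,\overline{c_\ell}\,c_m c_n .
$$
The quadratic part carries the (small) frequencies $(m_j)=(\tilde m_j/(j+1)^k)$, and every quartic monomial obeys the selection rule $k+\ell=m+n$, which is nothing but conservation of the angular momentum $P$; since $\M$ commutes with $\Lambda$, this rule propagates to every monomial produced along the normal form procedure.

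Second, I would record the tame multilinear estimates required by \cite{GIP} on the scale $L^{2,s}_\mathcal{E}$ (equivalent to $\HH^s$ by Lemma~\ref{lemEq}). These are already essentially contained in the proof of Theorem~\ref{bounds}: combining the boundedness of $\Pi$ on weighted spaces (Proposition~\ref{prop22}), the hypercontractive inequality \eqref{hypercontractivity}, and the stability of $\tilde{\mathcal{E}}$ under multiplication by $z^j$, one obtains for $s\geq s_0$ large the tame product bound
$$
\big\|\Pi(u_1\overline{u_2}u_3)\big\|_{L^{2,s}} \lesssim \|u_1\|_{L^{2,s}}\|u_2\|_{L^{2,s_0}}\|u_3\|_{L^{2,s_0}} + \|u_1\|_{L^{2,s_0}}\|u_2\|_{L^{2,s}}\|u_3\|_{L^{2,s_0}} + \|u_1\|_{L^{2,s_0}}\|u_2\|_{L^{2,s_0}}\|u_3\|_{L^{2,s}} .
$$
The class of tame multilinear Hamiltonian vector fields being stable under Poisson bracket, the higher-order brackets generated during the normalization satisfy analogous estimates, so the whole construction takes place within a fixed tame class.

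Third --- and this is the heart of the matter --- I would produce the full-measure set $\mathcal{B}_k$ on which $(m_j)$ is \emph{strongly non-resonant} in the sense of \cite{GIP}: for each order there exist $\gamma,\tau>0$ such that, for all finite index families compatible with the momentum constraint but not reducible to a trivial pairing,
$$
\Big| \sum_{i} m_{p_i} - \sum_{j} m_{q_j} \Big| \geq \frac{\gamma}{\big(1+\max_i p_i + \max_j q_j\big)^{\tau}} .
$$
Since $m_j=\tilde m_j/(j+1)^k$ depends affinely on the independent variables $\tilde m_j$, uniform on $[-1/2,1/2]$, a fixed non-trivial combination vanishes on a hyperplane, and the set where the left-hand side drops below $\gamma/(\cdot)^\tau$ has measure bounded by $\gamma$ times a fixed power of the largest index; summing over all admissible families (polynomially many at each size, for bounded order) and applying Borel--Cantelli yields a set $\mathcal{B}_k$ of full measure. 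On $\mathcal{B}_k$ the two linear constraints $\sum_i p_i=\sum_j q_j$ and $\sum_i m_{p_i}=\sum_j m_{q_j}$ force the index multisets to coincide, so the resonant part of the normal form is a function of the actions $|c_j|^2$ alone.

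Finally, with tameness and non-resonance established, \cite[Theorem 1.1]{GIP} provides, for data of size $\epsilon$ in $L^{2,s}_\mathcal{E}$, a close-to-identity canonical transformation $\Phi_\epsilon$ putting $H$ in the form $-\tfrac12\sum_j m_j|c_j|^2 + Z + \mathcal{R}$, where $Z=Z(|c_j|^2)$ depends only on the actions and $\mathcal{R}$ has a vector field of size $O(\epsilon^{2r+1})$ on $L^{2,s}_\mathcal{E}$. Because both $Z$ and the quadratic term Poisson-commute with every weighted norm $\|u\|_{L^{2,s}}^2=\sum_j\langle j\rangle^{2s}|c_j|^2$, the only source of growth is $\mathcal{R}$, and a standard continuation/bootstrap argument gives $\|u(t)\|_{L^{2,s}}\leq 2\epsilon$ for $|t|\leq c\epsilon^{-r}$, the a priori bound also furnishing global existence. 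I expect the main obstacle to be precisely the non-resonance step: one must handle infinitely many small-divisor conditions simultaneously with frequencies $m_j\to 0$, uniformly up to the order $\sim r$ needed to reach the time scale $\epsilon^{-r}$, all while keeping the tame bounds under control through each of the $O(r)$ normalization steps.
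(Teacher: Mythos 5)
Your strategy coincides with the paper's: Theorem \ref{thFN} is proved there in a single line, by observing that the change of unknown $v=e^{itH}u$ --- legitimate because $e^{itH}\Pi(u_1\overline{u_2}u_3)=\Pi\big(e^{itH}u_1\,\overline{e^{itH}u_2}\,e^{itH}u_3\big)$ --- turns \eqref{LLLM0} into $i\partial_t v+Hv+\M v=\Pi(|v|^2v)$, which is exactly an equation of the class covered by \cite[Theorem 1.1]{GIP}. Your elaboration of the hypotheses is mostly sound (the tame trilinear estimate via Proposition \ref{prop22} and \eqref{hypercontractivity}, and the Borel--Cantelli count for the measure estimate), but two points deserve flagging. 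First, you omit the gauge transformation and work with the Hamiltonian $-\frac12\sum_j m_j|c_j|^2+\mathcal H(u)$, whose linear frequencies $m_j\to0$; this is not the setting of \cite[Theorem 1.1]{GIP}, which treats perturbations of the harmonic oscillator with frequencies $2(j+1)+m_j$, so you cannot literally quote that theorem at the end without conjugating first. Second, and relatedly, the non-resonance condition you propose, with the small divisor bounded below by $\gamma\big(1+\max_ip_i+\max_jq_j\big)^{-\tau}$, is too weak to run the Birkhoff iteration: solving the homological equation would then cost a fixed power of the \emph{largest} index at each step, which the tame estimates cannot absorb. The condition actually used in \cite{GIP} controls the divisor in terms of the \emph{third} largest index (two indices are allowed to be arbitrarily large), and verifying it is the delicate part of their analysis, all the more so here since the $m_j$ decay like $(j+1)^{-k}$; in the conjugated variables this is where the linear growth of the harmonic-oscillator spectrum enters, while in your un-conjugated formulation one would have to extract it from the momentum constraint $\sum_ip_i=\sum_jq_j$, which you do not address. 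Since the intended proof is simply a citation, the fix is immediate: perform the conjugation $v=e^{itH}u$ first and then invoke \cite[Theorem 1.1]{GIP} as a black box.
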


To prove this result, we apply \cite[Theorem 1.1]{GIP} to the equation $i \partial_t v+  Hv+ \M v=  \Pi (|v|^2 v)$, obtained with the change of unknown $v=\e^{it H}u$.

By the result of Lemma\;\ref{lemEq}, Theorem\;\ref{thFN} shows that if the initial condition is strongly localised in space, then the corresponding solution also remains localised for large times.

 
\section{Stationary waves and their decay: general results} \label{Sect5}

\subsection{Definition and decay result}

Stationary waves are naturally associated to the symmetries of the equation.

\begin{df}
An $M$-stationary wave is a solution of~\eqref{LLL} of the form 
$$u(t) = e^{-i \lambda t} u_0, \quad  \mbox{where $\lambda \in \mathbb{R}$, $u_0 \in \widetilde{\mathcal{E}}$}.$$
An $MP$-stationary wave is  a solution of~\eqref{LLL} of the form
$$u(t) = e^{-i \lambda t} u_0(e^{-i \mu t} \cdot), \quad \mbox{where $\lambda ,\mu \in \mathbb{R}$, $u_0 \in \widetilde{\mathcal{E}}$}.$$
\end{df}

The concept of $M$ and $MP$-stationary waves can immediately be extended to the space $\widetilde{\mathcal{E}}$. Note that $M$ and $MP$-stationary waves are given, respectively, by the solutions of
\begin{align*}
\lambda u = \Pi \big( |u|^2 u\Big),\qquad \lambda u + \mu \Lambda u = \Pi \big( |u|^2 u\Big).
\end{align*}

\begin{lem}\label{Q=0}
Assume that $u\in L^{2,1/2}$ is a $MP$-stationary wave with $\mu\ne 0$, then $Q(u)=0$.
\end{lem}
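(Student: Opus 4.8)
The plan is to exploit the magnetic translations $R_\alpha$, under which $\mathcal{H}$ and $M$ are invariant while $P$ and $Q$ transform in a controlled way, together with the observation that an $MP$-stationary wave is a critical point of the functional $G:=\mathcal{H}-\tfrac{\lambda}{2}M-\tfrac{\mu}{2}P$. Indeed, for $\varphi$ in the tangent space one has $d\mathcal{H}(u_0)\cdot\varphi=\Re\langle \Pi(|u_0|^2u_0),\varphi\rangle$, $\tfrac12 dM(u_0)\cdot\varphi=\Re\langle u_0,\varphi\rangle$ and $\tfrac12 dP(u_0)\cdot\varphi=\Re\langle \Lambda u_0,\varphi\rangle$, so that the equation $\Pi(|u_0|^2u_0)=\lambda u_0+\mu\Lambda u_0$ reads exactly $dG(u_0)=0$.

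The main step is to test this identity against the infinitesimal generators of the magnetic translations. Writing $\alpha=\alpha_1+i\alpha_2$, set $w_j:=\partial_{\alpha_j}\bigl(R_\alpha u_0\bigr)\big|_{\alpha=0}$ for $j=1,2$. Since $\mathcal{H}\circ R_\alpha=\mathcal{H}$ and $M\circ R_\alpha=M$, differentiating at $\alpha=0$ gives $d\mathcal{H}(u_0)\cdot w_j=0$ and $dM(u_0)\cdot w_j=0$; hence $dG(u_0)=0$ forces $\tfrac{\mu}{2}\,dP(u_0)\cdot w_j=0$. On the other hand, differentiating the transformation law $P(R_\alpha u_0)=P(u_0)-2\Re(\ov\alpha\,Q(u_0))+|\alpha|^2 M(u_0)$ at $\alpha=0$ yields $dP(u_0)\cdot w_1=-2Q_x(u_0)$ and $dP(u_0)\cdot w_2=-2Q_y(u_0)$. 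Combining the two computations gives $\mu Q_x(u_0)=\mu Q_y(u_0)=0$, and since $\mu\neq 0$ we conclude $Q(u_0)=0$. (Equivalently, one may simply pair the equation in $L^2$ with $w_j$ and compute the three resulting inner products, which is what the variational bookkeeping encodes.)

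The only genuine subtlety, and the place where the hypothesis $u_0\in L^{2,1/2}$ is used, is to make all functionals finite and the differentiations legitimate. By the hypercontractive estimate~\eqref{hypercontractivity}, $u_0\in L^2_{\mathcal E}$ implies $u_0\in L^6_{\mathcal E}$, so $\Pi(|u_0|^2u_0)\in L^2$; the equation together with $\mu\neq 0$ then upgrades $\Lambda u_0\in L^2$, i.e.\ $\sum_n n^2|c_n|^2<\infty$ in the expansion $u_0=\sum_n c_n\varphi_n$. In particular $P(u_0)=\sum_n n|c_n|^2<\infty$ and $\|zu_0\|_{L^2}^2=\sum_n (n+1)|c_n|^2=M(u_0)+P(u_0)<\infty$, so in fact $u_0\in L^{2,1}_{\mathcal E}$. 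This makes $Q(u_0)$ finite, validates the transformation law for $P$, and shows $w_j\in L^2\cap\mathcal E$: the generators are built from $zu_0$ and $(\partial_z+\tfrac{\ov z}{2})u_0$, each controlled by $M(u_0)+P(u_0)$.

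I expect the computational heart (the two differentiations and the cancellation) to be entirely routine; the main obstacle is this regularity bootstrap and the attendant justification that $\alpha\mapsto R_\alpha u_0$ is $C^1$ into $L^2$ (and into $L^4$, where $\mathcal{H}$ is smooth), so that the chain rule applies and the invariance/transformation identities may be differentiated termwise. Once the bootstrap from $L^{2,1/2}$ to $L^{2,1}$ is secured, every step above is immediate.
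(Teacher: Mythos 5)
Your argument is correct, but it is genuinely different from the one in the paper. The paper's proof is dynamical and takes one line: writing $u(t,z)=e^{-i\lambda t}\psi(e^{-i\mu t}z)$ and changing variables gives $Q(u(t))=e^{i\mu t}Q(u(0))$, and since $Q$ is conserved along the flow this forces $Q(u)=0$ when $\mu\neq 0$. You instead work entirely at the level of the stationary equation, viewing it as $dG(u_0)=0$ for $G=\mathcal H-\tfrac{\lambda}{2}M-\tfrac{\mu}{2}P$ and pairing against the infinitesimal generators of the magnetic translations $R_\alpha$ of \eqref{defR}; the invariance of $\mathcal H$ and $M$ under $R_\alpha$ together with the transformation law $P(R_\alpha u)=P(u)-2\Re(\ov\alpha Q(u))+|\alpha|^2M(u)$ then yields $\mu Q(u_0)=0$. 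This is the Noether-dual of the paper's argument: the paper uses the conserved quantity $Q$ attached to the symmetry, you use the symmetry itself as a test direction. What your route buys is self-containedness --- it never invokes the time-dependent conservation of $Q$, which in Theorem \ref{mainCauchy} is only asserted for $zu_0\in L^2$ and hence would itself require the same regularity upgrade you carry out. What it costs is precisely that bookkeeping: the bootstrap from $u_0\in L^2_{\mathcal E}$ to $\Lambda u_0\in L^2$ via \eqref{hypercontractivity} and the equation (which, as you note, even lands you in $L^{2,2}_{\mathcal E}$, so that the generators $w_j$, built from $zu_0$ and $\partial_z u_0$, lie in $L^2_{\mathcal E}\subset L^4$), and the verification that $\alpha\mapsto R_\alpha u_0$ is $C^1$ into $L^2\cap L^4$ so the invariance identities may legitimately be differentiated. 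These points are routine and you have identified them correctly; the proof is sound.
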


\begin{proof}
There exists $\psi \in L^{2,1/2}$ such that $u(t,z)=\e^{-i\lambda t}\psi(\e^{-i\mu t}z)$, with $\mu \ne 0$, and 
$$Q(u)(t)=\int_{\C}z|u(t,z)|^2dL(z)=\int_{\C}z|\psi(\e^{-i\mu t}z)|^2dL(z)=\e^{i\mu t}\int_{\C}z|\psi(z)|^2dL(z)=\e^{i\mu t}Q(u)(0).$$
By conservation of $Q(u)$, this implies that $Q(u)=0$.
\end{proof}

\begin{thm}
\begin{itemize}
\label{theoremdecay}
\item[(i)] Assume that $u = \sum_{n=0}^{+\infty} c_n \phi_n \in \widetilde{\mathcal{E}}$ is an $MP$-stationary wave such that $|c_n| \lesssim r^n$ for some $r<1$. Then, for any
$$
\gamma < \gamma_0 = \frac{1}{2} \frac{\log 2}{\log 3} \sim 0.315\dots,
$$
there holds $|c_n| \lesssim n^{-\gamma n}$.
\item[(ii)] Assume that $u(z) \in \widetilde{\mathcal{E}}$ is an $MP$-stationary wave such that $|u(z)| \in L^\infty$ and $u(z) \to 0$ as $z \to \infty$. Then for any 
$$
\eta>\eta_0 = \left( \frac{1}{2} + \frac{1}{2} \frac{\log 2}{\log 3} \right)^{-1} \sim 1.226\dots,
$$
there holds $|u(z)| \lesssim e^{|z|^\eta-\frac{1}{2}|z|^2}$.
\end{itemize}
\end{thm}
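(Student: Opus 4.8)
The plan is to work in the coefficients $(c_n)$, in which the stationary equation \eqref{eq:statwave} is the stationary version of \eqref{LLLc}. Writing $S=k+\ell=m+n$ and using the exact identity $\frac{S!}{2^{S}\sqrt{k!\,\ell!\,m!\,n!}}=2^{-S}\binom{S}{k}^{1/2}\binom{S}{m}^{1/2}$, it reads
\[
(\lambda+\mu k)\,c_k=\frac1{2\pi}\sum_{S\geq k}2^{-S}\binom{S}{k}^{1/2}\,\overline{c_{S-k}}\sum_{m=0}^{S}\binom{S}{m}^{1/2}c_m c_{S-m}.
\]
For a nonzero solution one has $\lambda\neq 0$, so $|\lambda+\mu k|$ is bounded below for large $k$ (the finitely many remaining indices being harmless), and everything reduces to a self-improvement estimate for the trilinear right-hand side. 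The heart of $(i)$ is the claim: if $|c_n|\leq KA^{n}$ for all $n$ with $0<A<1$, then $|c_k|\leq C_0 K^{3}\langle k\rangle^{p}(A')^{k}$, where $A'=\exp\!\big(\max_{0<x\leq 1}h(x)\big)$ with $h(x)=\frac{\log\psi(x)}{x}+\big(\tfrac2x-1\big)\log A$, and crucially $A'\sim A/\sqrt2$ as $A\to 0$ (while $A'=A^{3}$ in the opposite regime $A\to1$).

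To prove the claim I would insert $|c_n|\leq KA^{n}$ into the right-hand side. The inner sum over $m$ contributes $A^{m}A^{S-m}=A^{S}$ times $\sum_{m=0}^{S}\binom{S}{m}^{1/2}\lesssim S^{1/2}2^{S/2}$ (Cauchy--Schwarz, saturated near $m=S/2$ where $\psi(m/S)=1$), and $|c_{S-k}|$ contributes $A^{S-k}$. Combining this with $2^{-S}\binom{S}{k}^{1/2}\asymp 2^{-S/2}\psi(k/S)^{S}$ (Stirling, exactly as in the proof of Proposition~\ref{propC}) collapses the full $S$-summand to $\psi(x)^{S}A^{2S-k}$ with $x=k/S$, whose exponent is precisely $k\,h(x)$; the remaining polynomial factors are harmless. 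The sum over $S$ is then estimated by Laplace's method: an elementary optimization gives the maximiser $x^{\ast}=1-\tfrac12A^{4}$, and evaluating $h$ there yields $A'\sim A/\sqrt2$ for small $A$. The cube $K^{3}$ simply reflects the three factors of $c$ in the nonlinearity.

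I then iterate. Starting from $A_0=r<1$ and $K_0$, the claim produces $A_{j+1}=A_j'$ and $K_{j+1}=C_0K_j^{3}$, so $K_j=e^{c\,3^{j}}$ for a constant $c$ depending on the data, while after finitely many steps one is in the regime $A_j\sim A_\ast\,2^{-j/2}$. For each fixed $n$ one has $|c_n|\leq K_jA_j^{n}$ for every $j$, hence $|c_n|\leq\min_j K_jA_j^{n}$. Taking logarithms, $\log(K_jA_j^{n})\approx c\,3^{j}-\tfrac12 n j\log 2+n\log A_\ast$, and optimising over $j$ gives $3^{j^{\ast}}\asymp n$, i.e.\ $j^{\ast}\approx\log_3 n$; substituting back, the dominant term is $-\tfrac12(\log 2)\,n\log_3 n=-\tfrac{\log 2}{2\log 3}\,n\log n=-\gamma_0 n\log n$, and the residual $O(n)$ is absorbed as soon as $\gamma<\gamma_0$. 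Thus $\gamma_0=\tfrac12\tfrac{\log 2}{\log 3}$ is forced by the competition between the cubing of the amplitude (the $3$, from the cubic nonlinearity) and the geometric gain $2^{-1/2}$ per step in the rate (the $\tfrac{\log 2}{2}$). I expect the main difficulty to be exactly this double limit: making the Laplace estimate for $A'$ uniform down to $A\to 0$ and controlling the accumulation of polynomial prefactors through infinitely many iterations.

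Part $(ii)$ I would deduce from $(i)$. First, $u\in L^{\infty}$ and $u(z)\to 0$ give, via Cauchy estimates for the entire function $f$ in $u=e^{-|z|^{2}/2}f$, that $c_n\to 0$; feeding this qualitative decay back into the stationary equation and exploiting the super-geometric smallness $2^{-S}\binom{S}{k}^{1/2}$ for $S\gg k$ upgrades it to an exponential bound $|c_n|\lesssim r^{n}$, $r<1$, which is the hypothesis of $(i)$. Applying $(i)$ gives $|c_n|\lesssim n^{-\gamma n}$ for every $\gamma<\gamma_0$. Finally I translate coefficient decay into growth: since $c_n=\sqrt{\pi n!}\,a_n$ with $a_n$ the Taylor coefficients of $f$, one finds $-\log|a_n|\geq(\gamma+\tfrac12)n\log n-O(n)$, so the classical order formula gives $\mathrm{order}(f)\leq(\gamma_0+\tfrac12)^{-1}=\eta_0$; hence $|f(z)|\lesssim_\eta e^{|z|^{\eta}}$ for every $\eta>\eta_0$, and $|u(z)|\lesssim e^{|z|^{\eta}-\frac12|z|^{2}}$. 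The identity $(\tfrac12+\gamma_0)^{-1}=\frac{2\log 3}{\log 6}$ reproduces the stated threshold.
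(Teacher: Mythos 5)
Your part (i) is essentially the paper's argument: the same bootstrap in $(c_n)$ coordinates, with the rate gaining a factor close to $2^{-1/2}$ per step (because $\psi$ is valued in $(2^{-1/2},1]$), the constant cubing at each step, and the number of iterations optimized to $\approx \log_3 n$, which is exactly where $\gamma_0=\frac{\log 2}{2\log 3}$ comes from. Your Laplace-method computation of $A'=e^{\max h}$ is a slightly more quantitative packaging of the paper's splitting of the sum near and away from $S=2k$, $m=S/2$, but the mechanism and the threshold are identical, and the accumulation of prefactors is handled the same way (it is subdominant to $n^{-\gamma n}$ at $j\approx\log_3 n$).

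Part (ii), however, has a genuine gap at the step ``upgrade the qualitative decay to $|c_n|\lesssim r^n$ by feeding it back into the coefficient equation.'' First, Cauchy estimates on the circle $|z|=\sqrt n$ give only $|c_n|\lesssim n^{1/4}\sup_{|z|=\sqrt n}|u|=o(n^{1/4})$, not $c_n\to0$, because $\sqrt{n!}\,n^{-n/2}e^{n/2}\sim (2\pi n)^{1/4}$. More seriously, even granting $c_n\to 0$, the coefficient equation does not bootstrap this to geometric decay: the ``super-geometric smallness of $2^{-S}\binom{S}{k}^{1/2}$'' holds only for $S\gg k$, whereas the dominant contribution to the right-hand side sits at the resonance $S\approx 2k$, $m\approx S/2$, where the interaction coefficient $\frac{(2k)!}{2^{2k}(k!)^2}\sim k^{-1/2}$ is only polynomially small. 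Summing over the $O(\sqrt k)\times O(\sqrt k)$ effective near-resonant indices, one finds $|\lambda+\mu k|\,|c_k|\lesssim k^{1/2}\big(\sup_{j\approx k}|c_j|\big)^3$ (up to polynomial factors), which produces no gain from a bound that merely tends to zero — and certainly none from $o(n^{1/4})$. The explicit stationary wave $e^{-|z|^2/2+z^2/2}$, with $c_n\sim n^{-1/4}$ on even modes, shows that bounded, slowly decaying coefficient sequences genuinely occur; the hypothesis $u(z)\to 0$ must therefore be exploited in a way that sees more than coefficient decay. This is why the paper's proof of (ii) contains a separate, substantial $z$-space step: using the Gaussian off-diagonal bound $|K(z,w)|\le\frac1\pi e^{-|z-w|^2/2}$ on the kernel of $\Pi$, it runs an iteration $M_n\le C_0e^{-c\kappa^{-2n}}+C_0M_{n-1}^3$ on the annular suprema $M_n=\sup_{|w|>\kappa^{-n}}|u(w)|$ to establish $|u(z)|\lesssim e^{-\sigma|z|^2}$ first (with an additional kernel analysis of $(\Lambda+\alpha)^{-1}$ when $\mu\neq0$, which your reduction to $|\lambda+\mu k|\gtrsim 1$ does not replace), and only then passes to coefficients to obtain $|c_n|\lesssim r^n$. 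Your final step — converting $|c_n|\lesssim n^{-\gamma n}$ into the order bound $\eta_0=(\frac12+\gamma_0)^{-1}$ — is fine and matches the paper's Step 3.
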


\begin{rem} The stationary waves exhibited in Theorem~\ref{thm41}, see also Appendix \ref{explicitstatwave}, give examples of:
\begin{itemize}
\item Finite energy stationary waves such that $c_n \sim \frac{r^n}{\sqrt{n!}}$ for any $r>0$ and $\sup_{|z|=\rho} |u(z)| \lesssim e^{-\frac{\rho^2}{2} + r \rho}$;
\item Infinite energy stationary waves $u \in L^\infty \setminus \cup_{\alpha>0} L^{\infty,\alpha}$ such that $c_n \sim \left\{ \begin{array}{ll} 0 & \mbox{if $n$ odd} \\ \frac{1}{n^{1/4}} & \mbox{if $n$ even} \end{array} \right.$.
\end{itemize}
These examples show that some of the conditions of the theorem are optimal; but they also suggest that stationary waves of finite energy might in general enjoy the bound $\sup_{|z|=\rho} |u(z)| \lesssim e^{-\frac{\rho^2}{2} + r \rho}$ for some $r$.
\end{rem}
\begin{cor}\label{coro35}
Let $u$ be an $MP$-stationary wave in $L^\infty$ such that $u(z) \to 0$ as $|z|\to \infty$, and let
$$
N(R) = \# \big\{ z \in \mathbb{C} \; \mbox{such that} \; u(z) = 0 \; \mbox{and} \; |z|<R \big\} .
$$
Then for any $\eta > \eta_0$, 
$$
\frac{N(R)}{R^\eta} \longrightarrow 0 \qquad \mbox{ as } \;\;R \to \infty.
$$
\end{cor}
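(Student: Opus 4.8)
The plan is to deduce the corollary directly from the pointwise growth bound of Theorem~\ref{theoremdecay}(ii) via Jensen's formula for entire functions. Write $u(z)=e^{-|z|^2/2}f(z)$ with $f$ entire and holomorphic; since $e^{-|z|^2/2}$ never vanishes, the zeros of $u$ coincide with those of $f$, so $N(R)$ is the number of distinct zeros of $f$ in the disk $\{|z|<R\}$. We may assume $u\not\equiv 0$, so that $f$ has isolated zeros and $N(R)$ is finite for every $R$.

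First I would turn the decay estimate into a growth estimate on $f$. Fix $\eta>\eta_0$ and choose an intermediate exponent $\eta'$ with $\eta_0<\eta'<\eta$. Applying Theorem~\ref{theoremdecay}(ii) with $\eta'$ gives $|u(z)|\lesssim e^{|z|^{\eta'}-\frac12|z|^2}$, hence
$$|f(z)|=e^{|z|^2/2}|u(z)|\lesssim e^{|z|^{\eta'}},$$
so $f$ is entire of finite order at most $\eta'$.

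Next I would count zeros through Jensen's formula. Let $n(t)$ denote the number of zeros of $f$ in $\{|z|\le t\}$ counted with multiplicity, so that $N(R)\le n(R)$. After factoring out a possible zero $z^m$ at the origin (which alters the counting functions and the formula only by terms $\lesssim \log R$, harmless for our purposes), Jensen's formula yields
$$\int_0^R \frac{n(t)}{t}\,dt=\frac{1}{2\pi}\int_0^{2\pi}\log\big|f(Re^{i\theta})\big|\,d\theta+O(\log R)\lesssim R^{\eta'}.$$
Using the monotonicity of $n$,
$$n(R)\log 2\le \int_R^{2R}\frac{n(t)}{t}\,dt\le \int_0^{2R}\frac{n(t)}{t}\,dt\lesssim R^{\eta'},$$
and therefore $N(R)\le n(R)\lesssim R^{\eta'}$.

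There is no genuinely hard step: the substance of the corollary is contained in the growth bound of Theorem~\ref{theoremdecay}(ii), and the argument merely packages the classical relationship between the order of an entire function and the density of its zeros. The only point requiring care — and the reason for introducing the intermediate exponent $\eta'$ — is upgrading the natural $O(R^\eta)$ bound to the claimed $o(R^\eta)$. Since $\eta'<\eta$, the estimate $N(R)\lesssim R^{\eta'}$ gives
$$\frac{N(R)}{R^\eta}\lesssim R^{\eta'-\eta}\longrightarrow 0\qquad\mbox{as }R\to\infty,$$
which is the assertion.
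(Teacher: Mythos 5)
Your proof is correct and follows essentially the same route as the paper: both deduce the growth bound $|f(z)|\lesssim e^{|z|^{\eta'}}$ from Theorem~\ref{theoremdecay}(ii) and then count zeros via Jensen's formula, comparing $n(R)$ (or the count in a half-radius disk) to the integral over a dyadic annulus. Your explicit introduction of the intermediate exponent $\eta'$ to upgrade $O(R^\eta)$ to $o(R^\eta)$ is the same device the paper uses implicitly by letting $\eta$ range over all exponents above $\eta_0$.
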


\begin{rem}
Let $v \in \mathcal{E}$, then with the same  proof one obtains $N(R) \lesssim R^2$. This bound is sharp as shown by the two following examples : 
\begin{itemize}
\item Let $0<\delta <\frac12$ and set $\dis v(z)=\frac{\sin(\delta z^2)}{\delta z^2} e^{-|z|^2/2}$. Then $v \in \mathcal{E}$ and $N(R) \sim cR^2$. The zeros are located on the real and imaginary axes.
\item Let $0<\alpha<1$. The Weierstrass $\sigma_{\alpha}$-function associated to the lattice
$$\Lambda_{\alpha}=\Big\{\sqrt{\frac{\pi}{\alpha}}(m+in), \;\;m,n \in \Z\Big\},$$
satisfies $z \mapsto \sigma_{\alpha}(z) e^{-|z|^2/2} \in \mathcal{E}$ and vanishes exactly on $\Lambda_{\alpha}$, so that  $N(R) \sim cR^2$. See {\cite[Lemma 5.6, page 201]{Zhu}} for more details.
\end{itemize}
\end{rem}\medskip

\begin{proof}[Proof of Corollary \ref{coro35}] Write $u(z) = e^{-\frac{1}{2}|z|^2} f(z)$, where $f(z)$ is an entire function. Denote $\{ a_k \}$ the zeros of $f$. Assuming for simplicity that $f(0) \neq 0$, and provided that $f$ does not vanish on $\partial B(0,R)$, Jensen's formula gives
$$
\log |f(0)| = \sum_{|a_k| < R} \log \frac{|a_k|}{R} + \frac{1}{2\pi} \int_0^{2\pi} \log |f(Re^{i\theta})|\,d\theta.
$$
Denoting $N'(R) =\# \{ z \in \mathbb{C} \; \mbox{such that} \; u(z) = 0 \; \mbox{and} \; 0 <|z|< \frac{R}{2} \}$, the above clearly implies that
$$
 (\log 2) N'(R) \leq - \log |f(0)| + \frac{1}{2\pi} \int_0^{2\pi} \log |f(Re^{i\theta})|\,d\theta.
$$
By Theorem~\ref{theoremdecay}, for any $\eta >\eta _0$, $|f(z)| \lesssim_\eta e^{|z|^\eta }$. Combining this with the above inequality gives
$$
N'(R) \lesssim_\eta 1 +  R^\eta ,
$$
which leads to the desired result.
\end{proof}

\subsection{Proof of $(i)$ in Theorem~\ref{theoremdecay}} \underline{Step 1: a closer look at the $(c_n)$ equation.} The equation satisfied by $MP$- becomes, in $(c_n)$ coordinates
$$
(\lambda + \mu k) c_k = \frac{1}{2 \pi} \sum_{\substack{ \ell,m,n \geq 0 \\ k + \ell = m + n }} \frac{(k+\ell)!}{2^{k+\ell} \sqrt{k! \ell ! m! n!}} \overline{c_\ell} c_m c_n, \quad k\geq 0.
$$
For simplicity, and since this does not affect estimates, we shall take $\mu=0$ in the following. The above can also be written
$$
\lambda c_k = \frac{1}{2 \pi}\sum_{S=k}^{+\infty} \sum_{m=0}^S \sqrt{\frac{S!}{2^S k! (S-k)!}}\sqrt{\frac{S!}{2^S m! (S-m)!}}\overline{c_{S-k}} c_m c_{S-m}.
$$
By Stirling's formula, we can bound
$$
\sqrt{\frac{S!}{2^S k! (S-k)!}} \lesssim \psi\left( \frac{k}{S} \right)^S ,
$$
 where we denote, if $0<x<1$, $\displaystyle \psi(x) = \sqrt{\frac{1}{2 x^x (1-x)^{1-x}}}$. It will be important that $\psi(x)$ takes values in $(2^{-1/2},1]$, and is  equal to $1$ only if $x=\frac{1}{2}$.

Assuming that $|c_n| \lesssim r^n$ for some $r<1$, the above immediately implies that
$$
|c_k| \lesssim \sum_{S=k}^{+\infty} \sum_{m=0}^S \psi \left(\frac{k}{S} \right)^S \psi \left(\frac{m}{S} \right)^S  r^{2S - k}.
$$

\bigskip
\noindent
\underline{Step 2: the bootstrap argument.} Here we assume first that $|c_k| \leq C_r r^k$, for some $r<1$ to be determined and aim at obtaining a bound of the type $|c_k| \leq C_\rho \rho^k$, where $\rho$ depends on $r$ and $C_\rho$ on $C_r$. 
 
We fix $\kappa \in \left(\frac{1}{\sqrt{2}},1\right)$ and let $\epsilon \in \left(0,\frac{1}{2}\right)$ be such that $\psi(\epsilon) = \kappa$.  Observe that
$$\psi (x)\le \kappa  \quad \mbox{if $\left \vert x-\frac 12\right \vert \ge \frac 12-\epsilon $} \ .$$
 Splitting the sum above estimating $|c_k|$, we get
\begin{align*}
|c_k| & \lesssim C_r^3 \left[ \sum_{\substack{|2k-S|<2 \left(\frac{1}{2}-\epsilon\right) S  \\ |2m-S|<2 \left(\frac{1}{2}-\epsilon\right) S}}  r^{2S-k} +  \sum_{S=k}^{+\infty} \sum_{m=0}^S \psi(\epsilon)^S r^{2S-k} \right]  \\
& \lesssim C_r^3 k\left[ r^{\frac{1+\epsilon}{1-\epsilon}k} + (\kappa r)^k \right].
\end{align*}
We now assume that  $r$ is such that the second term in the above right-hand side dominates the first one, which corresponds to 
\begin{equation}\label{r0}
r\leq \kappa ^{\frac {1-\epsilon}{2\epsilon}} \ .
\end{equation}
Notice that, given $r<1$, \eqref{r0} is satisfied  if $\kappa <1$ is close enough to $1$.
Choosing furthermore any $\kappa' \in ( \kappa ,1)$, this gives
$$
|c_k| \lesssim (\kappa' r)^k.
$$
Thus we found that, for $r<1$ satisfying \eqref{r0}, $\kappa '\in (\kappa ,1)$, and for a constant $A>0$,
$$
|c_k| \leq C_r r^k \quad \implies \quad |c_k| \leq A (C_r)^3  (\kappa' r)^k.
$$
 Iterating this implication gives that
$$
|c_k| \leq B_n (\delta_n)^k \quad \mbox{where} \quad 
\left\{ \begin{array}{l} \delta_{n+1} = \kappa' \delta_n \\ B_{n+1} = A B_n^3 \end{array} \right. 
\quad \mbox{and} \quad \left\{ \begin{array}{l} \delta_{0} = r \\ B_{0} = C_r. \end{array} \right. 
$$
This implies in particular that, for any $n$, $k$, 
$$
|c_k| \lesssim (\kappa')^{nk} e^{C 3^n}.
$$
Choosing $n = \left [\frac{\log k}{\log 3}\right ]+1$, this gives the bound
$$
|c_k| \lesssim k^{-\gamma k}
$$
for any $\gamma < - \frac{\log \kappa'}{\log 3}.$ In particular, this implies 
$$|c_k|\lesssim r^k$$
for any $r\in (0,1)$. This means that \eqref{r0} is satisfied for every $\kappa \in (2^{-1/2}, 1)$. Applying again the same bootstrap argument, we obtain
$$
|c_k| \lesssim k^{-\gamma k}
$$
for any $\gamma <\gamma _0=\frac{\log2}{2\log3}.$ \medskip

\subsection{Proof of $(ii)$ in Theorem~\ref{theoremdecay}} ~\vspace{5pt}

\noindent\underline{Step 1: establishing Gaussian decay for $M$-stationary waves  in $z$ coordinates.} Without loss of generality, start with $u$, a function in $L^\infty$ going to zero at infinity, solving
$$
u = \Pi \big(|u|^2 u\big).
$$
Using first the Gaussian bound on the kernel of $\Pi$, and then elementary estimates, we get for $\kappa \in (0,1)$
\begin{align*}
|u(z)| & \lesssim \int e^{-\frac{1}{2}|w-z|^2} |u(w)|^3 \,dL(w) \\
& \leq \int_{|w| < \kappa |z|} e^{-\frac{1}{2}|w-z|^2} |u(w)|^3 \,dL(w) + \int_{|w| > \kappa |z|} e^{-\frac{1}{2}|w-z|^2} |u(w)|^3 \,dL(w) \\
& \lesssim e^{- \frac{(1-\kappa)^2}{3}|z|^2} + \sup_{|w| > \kappa |z|} |u(w)|^3.
\end{align*}
Setting $M_n = \sup_{|w|> \kappa^{-n}} |u(w)|$, this translates into
$$
M_n \leq C_0 e^{-\frac{(1-\kappa)^2}{3} \kappa^{-2n}} + C_0 M_{n-1}^3,
$$
for a constant $C_0$.

We now claim that $M_n < A e^{-\epsilon \kappa^{-2n}}$ for $n>n_0$, where $n_0$, $A$ and $\epsilon$ are positive constants to be determined. This will follow by induction if we can make sure that
\begin{equation*}
\left\{
\begin{array}{l}
M_{n_0} < A e^{-\epsilon \kappa^{-2n_0}} \\
C_0 e^{- \frac{(1-\kappa)^2}{3} \kappa^{-2n}} + C_0 A^3 e^{- 3 \kappa^2 \epsilon \kappa^{-2n}} < A e^{-\epsilon \kappa^{-2n}} \quad \mbox{for $n > n_0$},
\end{array}
\right.
\end{equation*}
which would follow from
\begin{equation}
\label{colvert}
\left\{
\begin{array}{l}
M_{n_0} < A e^{-\epsilon \kappa^{-2n_0}} \\
C_0 e^{- \frac{(1-\kappa)^2}{3} \kappa^{-2n}} < \frac{1}{2} A e^{-\epsilon \kappa^{-2n}} \quad \mbox{for $n>n_0$} \\
C_0 A^3 e^{- 3 \kappa^2 \epsilon \kappa^{-2n}} < \frac{1}{2} A e^{-\epsilon \kappa^{-2n}} \quad \mbox{for $n>n_0$},
\end{array}
\right.
\end{equation}
In order to make sure that these inequalities are satisfied, we choose $\kappa$, $n_0$, $A$ and $\epsilon$ as follows.
\begin{itemize}
\item First choose $\kappa = \frac{1}{\sqrt{3}}$ and $A < \frac{1}{\sqrt{2C_0}}$. This ensures that the third inequality in~\eqref{colvert} holds.
\item Next, pick $n_0$ so big that $C_0 e^{- \frac{(1-\kappa)^2}{6} \kappa^{-2n_0}} < \frac{A}{2}$ and $M_{n_0} < \frac{A}{2}$ (using that $M_{n} \to 0$ as $n \to \infty$ by hypothesis). This ensures that the second inequality in~\eqref{colvert} holds, provided $\epsilon < \frac{(1-\kappa)^2}{6}$.
\item Finally, choose $\epsilon \in \left(0, \frac{(1-\kappa)^2}{6}\right)$ so small that $\frac{1}{2} < e^{-\epsilon \kappa^{-2n_0}}$. Combined with $M_{n_0} < \frac{A}{2}$, this ensures that the first inequality in~\eqref{colvert} holds. 
\end{itemize}
Thus the claim holds, and we get that $|u(z)| \lesssim e^{-\sigma |z|^2}$ for some $\sigma>0$.\medskip

\underline{Step 1 bis: establishing Gaussian decay for $MP$-stationary waves in $z$ coordinates.}

Now we consider the equation $\lambda u+\mu \Lambda u =  \Pi(|u|^2u)$ with  $\mu \neq0$. Set $\alpha=\lambda/\mu$. \medskip

$\bullet$ Case $-\alpha \notin \N$. In this case, the equation is equivalent to 
$$u=\frac1\mu (\Lambda+\alpha)^{-1}\big[ \Pi(|u|^2u)\big].$$
Let us compute the kernel of $ (\Lambda+\alpha)^{-1}$. For all $n\in \N$, $ (\Lambda+\alpha)^{-1} \varphi_n= (n+\alpha)^{-1}\varphi_n$, then for $u\in F^2$, 
\begin{eqnarray*}
(\Lambda+\alpha)^{-1} u(z)&=& \sum_{n=0}^{+\infty} \frac1{n+\alpha} \big(\int_{\C} u(w)\ov{\varphi_n(w)}dL(w)\big) \varphi_n(z)\\
&=&   \int_{\C} u(w)K_{\alpha}(z,w) dL(w)
\end{eqnarray*}
with 
\begin{equation}\label{nk1}
K_{\alpha}(z,w) =   \sum_{n=0}^{+\infty} \frac1{n+\alpha} \varphi_n(z) \ov{\varphi_n(w)}  = \frac1{\pi}  \e^{-\frac{|z|^2}2-\frac{|w|^2}2} \sum_{n=0}^{+\infty} \frac{(z\ov{w})^n}{(n+\alpha) n!}.
\end{equation}
We claim that there exists $A\geq 0$ such that 
\begin{equation}\label{claim1}
|K_{\alpha}(z,w)|  \leq C(1+|zw|^{A})(\e^{\Re{(z\ov{w}})}+1)  \e^{-\frac{|z|^2}2-\frac{|w|^2}2}.
\end{equation}
Let $n_0$ be the smallest integer such that $n_0+\alpha>0$. Then 
 \begin{multline}
K_{\alpha}(z,w)   = \frac1{\pi}  \e^{-\frac{|z|^2}2-\frac{|w|^2}2} \sum_{n=0}^{n_0-1} \frac{(z\ov{w})^n}{(n+\alpha) n!}+ \frac1{\pi}  \e^{-\frac{|z|^2}2-\frac{|w|^2}2}  \int_0^1 t^{\alpha-1} \big(\sum_{n=n_0}^{+\infty} \frac{(t z\ov{w})^n}{n!}\big)dt\\
=\frac1{\pi}  \e^{-\frac{|z|^2}2-\frac{|w|^2}2} \sum_{n=0}^{n_0-1} \frac{(z\ov{w})^n}{(n+\alpha) n!}+ \frac1{\pi}  \e^{-\frac{|z|^2}2-\frac{|w|^2}2}  \int_0^1 t^{\alpha-1} \big(\e^{tz\ov{w}}-\sum_{n=0}^{n_0-1} \frac{(t z\ov{w})^n}{n!}\big)dt .\label{sum1}
\end{multline}
 If $|wz| \leq 1$, then from \eqref{nk1} we get $|K_{\alpha}(z,w)| \leq C \e^{-\frac{|z|^2}2-\frac{|w|^2}2}$. In the sequel we assume $|wz| \geq 1$. Then 
 \begin{multline}
  \int_0^1 t^{\alpha-1} \big|\e^{tz\ov{w}}-\sum_{n=0}^{n_0-1} \frac{(t z\ov{w})^n}{n!}\big|dt =\\
  \begin{aligned}
  &= \int_0^{\frac1{|wz|}} t^{\alpha-1} \big|\e^{tz\ov{w}}-\sum_{n=0}^{n_0-1} \frac{(t z\ov{w})^n}{n!}\big|dt+ \int_{\frac1{|wz|}}^{1} t^{\alpha-1} \big|\e^{tz\ov{w}}-\sum_{n=0}^{n_0-1} \frac{(t z\ov{w})^n}{n!}\big|dt \nonumber \\
  &= I_1+I_2.
  \end{aligned}
 \end{multline}
 In the first integral, we make the change of variables $s=t |wz|$ and get $I_1\leq C$. For the second, we get 
 \begin{equation*}
 I_2 \leq C(1+|zw|^{1-\alpha})(\e^{\Re{(z\ov{w}})}+|zw|^{n_0-1}+1).
 \end{equation*}
We also have the bound
 \begin{equation*}
 \big| \sum_{n=0}^{n_0-1} \frac{(z\ov{w})^n}{(n+\alpha) n!}\big| \leq C( |zw|^{n_0-1}+1).
 \end{equation*}
 As a conclusion, from \eqref{sum1} and the previous estimates we get \eqref{claim1}. \medskip
 
 $\bullet$ Case $-\alpha = n_0 \in \N$. 
 
 \begin{equation*}
K_{-n_0}(z,w) =   \sum_{n\neq n_0}\frac1{n-n_0} \varphi_n(z) \ov{\varphi_n(w)}  = \frac1{\pi}  \e^{-\frac{|z|^2}2-\frac{|w|^2}2} \sum_{n\neq n_0} \frac{(z\ov{w})^n}{(n-n_0) n!}.\label{nk}
\end{equation*}
 For $n\geq n_0+1$ we write $(n-n_0)^{-1}=\dis \int_0^{1}t^{n-n_0-1}dt$, and as previously we get 
  \begin{equation*}
K_{-n_0}(z,w) =    \frac1{\pi}  \e^{-\frac{|z|^2}2-\frac{|w|^2}2} \Big( \sum_{n=0}^{n_0-1} \frac{(z\ov{w})^n}{(n-n_0) n!} +\int_0^1 t^{-n_0-1} \big(\e^{tz\ov{w}}-\sum_{n=0}^{n_0} \frac{(t z\ov{w})^n}{n!}\big)dt\Big). 
\end{equation*}
 Similarly, there exists $A>0$ such that 
 \begin{equation}\label{claim2}
|K_{-n_0}(z,w)|  \leq C(1+|zw|^{A})(\e^{\Re{(z\ov{w}})}+1)  \e^{-\frac{|z|^2}2-\frac{|w|^2}2}.
\end{equation}\medskip
 
In the sequel, we assume that $|z|\geq 1$. We set $v=\Pi(|u|^2u)$. Then, by Step 1, 
 \begin{equation}\label{boot1}
 |z|^{3A}|v(z)| \leq \\
 C_0 e^{- \frac{(1-\kappa)^2}{3}|z|^2} + C_0\sup_{|w| > \kappa |z|} \big(|w|^{A} |u(w)|\big)^3.
 \end{equation}
 Then thanks to \eqref{nk1} and \eqref{claim2}
 \begin{multline*}
 |z|^A|u(z)| \leq C |z|^A\int_{\C}(1+|wz|^A) \e^{-\frac{|z|^2}2-\frac{|w|^2}2} |v(w)|dL(w)+\\
\begin{aligned}
&+C|z|^A \int_{\C}(1+|wz|^A) \e^{-\frac{|z-w|^2}2} |v(w)|dL(w)+C|z|^{n_0+A}\e^{-\frac{|z|^2}2} \nonumber\\
&=J_1+J_2+J_3.
\end{aligned}
 \end{multline*}
The term $J_3$ is the contribution of the mode $n_0$ in the case $\alpha=-n_0$, and we have $J_3\leq C\e^{-\frac{|z|^2}3}$. Then we clearly have $J_1\leq C\e^{-\frac{|z|^2}3}$. We write
\begin{eqnarray*}
J_2&=&C|z|^A \int_{|w|<\kappa |z|}(1+|wz|^A) \e^{-\frac{|z-w|^2}2} |v(w)|dL(w) +C|z|^A \int_{|w|>\kappa |z|}(1+|wz|^A) \e^{-\frac{|z-w|^2}2} |v(w)|dL(w)\nonumber\\
&\leq & C  e^{- \frac{(1-\kappa)^2}{3}|z|^2} + C\sup_{|w| > \kappa |z|} \big(|w|^{3A} |v(w)|\big).
\end{eqnarray*}
This implies that 
\begin{equation}\label{boot2}
 |z|^A|u(z)| \leq C  e^{- \frac{(1-\kappa)^2}{3}|z|^2} + C\sup_{|w| > \kappa |z|} \big(|w|^{3A} |v(w)|\big).
\end{equation}
 We set  $\dis M_n = \sup_{|w|> \kappa^{-n}} |w|^{A}|u(w)|$ and  $\dis N_n = \sup_{|w|> \kappa^{-n}} |w|^{3A}|v(w)|$, therefore 
 $$
N_n \leq C_0 e^{-\frac{(1-\kappa)^2}{3} \kappa^{-2n}} + C_0 M_{n-1}^3,
$$

 $$
M_n \leq C_0 e^{-\frac{(1-\kappa)^2}{3} \kappa^{-2n}} + C_0 N_{n-1}.
$$
We are now able to conclude as in Step 1 by induction (here we need to initialize $M_{n_0}$ and $M_{n_0+1}$).
\bigskip

\noindent
\underline{Step 2: bootstrapping in$(c_n)$ coordinates.} Since $|u(z)| \lesssim e^{-\sigma |z|^2}$ for some $\sigma >0$,  we can bound the coordinates $(c_n)$ of $u$ by
\begin{align*}
|c_n| & = \left| \int_\C u(z) \ov{\varphi_n(z)} \,dL(z) \right| 
\lesssim \frac{1}{\sqrt{n!}} \int_\C e^{-\left(\frac{1}{2} + \sigma \right) |z|^2} |z|^n \,dL(z) \\
& \lesssim \frac{\Gamma \left( \frac{n}{2} + 1 \right)}{\sqrt{n!} \left( \frac{1}{2} + \sigma \right)^{\frac{n}{2}+1}} ,
\end{align*}
where $\Gamma$ is Euler's Gamma function. By Stirling's formula,
$$
|c_n| \lesssim \frac{n^{1/4}}{(1+2\sigma)^{n/2}}.
$$
This means that $|c_n| \lesssim r^n$ for some $r\in (0,1)$. By $(i)$ in Theorem~\ref{theoremdecay}, we obtain that, for any $\gamma<\gamma_0$, $|c_n| \lesssim n^{-\gamma n}$.

\bigskip
\noindent
\underline{Step 3: back to $z$ coordinates.} Using that $|c_n| \lesssim n^{-\gamma n}$, for $\gamma<\gamma_0$, we get by Stirling's formula that
$$
|u(z)| = \left| \sum_{n=0}^{+\infty} c_n \frac{z^n}{\sqrt{\pi n!}} e^{-\frac{1}{2} |z|^2} \right| \lesssim \sum_{n=0}^{+\infty} n^{-\gamma n} \frac{|z|^n}{\sqrt{n!}} e^{-\frac{1}{2} |z|^2} \lesssim  \sum_{n=0}^{+\infty} {n^{-(\gamma+\frac{1}{2})n}} (e^{\frac12}|z|)^n e^{-\frac{1}{2} |z|^2}.
$$
By Young's inequality,
$$
|u(z)| \lesssim \left[  \sum_{k=0}^{+\infty} k^{-k} (2e^{\frac12}|z|)^{\frac{k}{\frac{1}{2} + \gamma}} \right]^{\frac{1}{2} + \gamma}e^{-\frac{1}{2} |z|^2} \lesssim e^{C | z|^{\frac{1}{\frac{1}{2} + \gamma}}-\frac{1}{2} |z|^2},
$$
which is the desired result.

\section{ Stationary waves with a finite number of zeros}\label{Sect6}

\subsection{The classification result}

\begin{thm}\label{thm41} \begin{itemize}

\item[(i)] $M$-stationary waves in $\mathcal{E}$ with a finite number of zeros and unit mass are given, modulo space and phase rotation, by $\phi_n^\alpha(z) e^{-i \lambda t}$ where
$$
\phi_n^\alpha(z) = R_{- \overline \alpha} (\phi_n)(z) = \frac{1}{\sqrt{\pi n!}} (z-\overline \alpha)^n e^{-\frac{|z|^2}{2}-\frac{|\alpha|^2}{2} + \alpha z} \quad 
\mbox{and\, $\left\{ \begin{array}{l} n \in \mathbb{N}, \alpha \in \mathbb{C} \\ \lambda = \frac{(2n)!}{\pi (n!)^2 2^{2n+1}} \end{array} \right.$}.
$$
They satisfy
$$
\mathcal{H}(\phi_n^\alpha) = \frac{1}{8\pi} \frac{(2n)!}{2^{2n} (n!)^2}, \;\;\;\;
M(\phi_n^\alpha) =1, \;\;\;\; P(\phi_n^\alpha) = n + |\alpha|^2, \;\;\;\; Q(\phi^\alpha_n) = \overline \alpha.
$$
\item[(ii)] Besides the $\phi_n^\alpha$, $MP$-stationary waves in $\mathcal{E}$ with a finite number of zeros and unit mass are given, modulo space and phase rotation, by
$\psi_b(e^{-i\mu t} z) e^{- i \lambda t }$, where
$$
\psi_b(z) = \frac{e^{-\frac{1}{2} \left( \frac{b}{1+b^2} \right)^2}}{\sqrt{\pi(1+b^2)}} \left( z - \frac{b(2+b^2)}{1+b^2}  \right) e^{-\frac{1}{2}|z|^2 + \frac{b}{1+b^2} z} \quad 
\mbox{and\, $\left\{ \begin{array}{l} b \in [0,\infty) \\ \lambda = \frac{1}{8 \pi (1+b^2)}\left(2b^2 + 1 + \frac{b^2}{1+b^2} \right) \\ \mu = - \frac{1}{8 \pi} \end{array} \right.$}.
$$
They satisfy
$$
\mathcal{H}(\psi_b) = \frac{1}{8 \pi} \left( 1 - \frac{1}{2(1+b^2)^2} \right), \;\;\;\; M(\psi_b) = 1, \;\;\;\; P(\psi_b) = \frac{1}{(1+b^2)^2}, \;\;\;\; Q(\psi_b) = 0.
$$
\item[(iii)] $M$-stationary waves in $\widetilde{\mathcal{E}} \setminus \mathcal{E}$ with a finite number of zeros are given, modulo space and phase rotation, by
$$
u(t) = A e^{- \frac{1}{2}|z|^2 + \frac{1}{2} z^2 + i  s z} e^{-i\lambda t}, \quad \mbox{where $A, s \in \mathbb{R}$, and $\lambda = \frac{A^2}{\sqrt{2}}$}.
$$
\item[(iv)] Besides the previous example, $MP$-stationary waves in $\widetilde {\mathcal{E}} \setminus \mathcal{E}$ with a finite number of zeros are given, modulo space and phase rotation, by
$$
u(t) = A (e^{-i\mu t} z + i r) e^{- \frac{1}{2}|z|^2 + \frac{1}{2}e^{- 2i\mu t} z^2} e^{-i\lambda t}, \;  \mbox{where }
\left\{ \begin{array}{l}
A \in \mathbb{R},\\
\lambda = \frac{1}{\sqrt{2}}\left(\frac{3}{2} + r^2 \right) A^2.\\
\mu = \frac{A^2}{\sqrt{2}}
\end{array} \right.
$$
\end{itemize}
\end{thm}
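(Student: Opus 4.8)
The plan is to reduce the functional equation~\eqref{eq:statwave} to a finite system of algebraic constraints by first pinning down the \emph{shape} of $u$ through complex analysis, then inserting this shape into the equation and matching coefficients.

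\emph{Step 1 (structural form via Hadamard).} Write $u=e^{-|z|^2/2}f$ with $f$ entire. Since $u\in\widetilde{\mathcal E}$, the coefficients $(c_n)$ grow at most polynomially, so $|f(z)|\lesssim\langle z\rangle^N e^{|z|^2/2}$ and $f$ has order at most $2$. If $u$ has finitely many zeros, then so does $f$, and Hadamard's factorization theorem gives
$$u(z)=P(z)\,e^{a z^2+b z-\frac12|z|^2},\qquad P\ \text{a polynomial},\ a,b\in\C.$$
Inspecting the growth of $|u|$ along the rays $z=r\,e^{i\theta}$ shows that temperedness forces $|a|\le\tfrac12$, that $u\in L^2$ (i.e. $u\in\mathcal E$) precisely when $|a|<\tfrac12$, and that $|a|=\tfrac12$ is exactly the case $u\in\widetilde{\mathcal E}\setminus\mathcal E$. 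Using the space rotation $L_\phi$, which acts by $a\mapsto e^{2i\phi}a$, I normalize $a\ge0$, while $T_\gamma$ absorbs a global constant. \emph{Step 2 (inserting the ansatz).} A direct computation gives $\Lambda u=e^{-|z|^2/2}z f'$, so the left side of~\eqref{eq:statwave} is $e^{-|z|^2/2}(\lambda f+\mu z f')$. For the right side, the kernel of $\Pi$ turns $\Pi(|u|^2u)$ into a Gaussian integral
$$\Pi(|u|^2u)(z)=\frac1\pi e^{-\frac12|z|^2}\int_\C e^{\bar w z-2|w|^2+2a w^2+\bar a\bar w^2+2b w+\bar b\bar w}\,P(w)^2\,\overline{P(w)}\,dL(w),$$
which is again of the form $(\text{polynomial})\,e^{\tilde a z^2+\tilde b z-\frac12|z|^2}$ with $\tilde a=\dfrac{a}{2(1-2|a|^2)}$. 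Hence the requirement $\tilde a=a$ forces either $a=0$ or $|a|=\tfrac12$, which is exactly the dichotomy $\mathcal E$ versus $\widetilde{\mathcal E}\setminus\mathcal E$ and is the conceptual crux organizing the proof.

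\emph{Step 3 (the case $a=0$, i.e. $u\in\mathcal E$).} If $\mu=0$, I use a magnetic translation $R_\alpha$ (which preserves $a=0$, kills the linear term, and recenters $P$) to reduce to $u=P(z)e^{-|z|^2/2}$, a \emph{finite} combination $\sum_{k\le n}c_k\varphi_k$ with $c_n\ne0$. Evaluating the integral, $\Pi(|u|^2u)=(\text{polynomial of degree}\le 2n)\,e^{-|z|^2/2}$, while the left side has degree $n$; forcing the coefficients of degree $>n$ to vanish yields, by downward induction on the coefficients of $P$, that $P$ is a monomial. Undoing the normalizations produces exactly the family $\phi_n^\alpha$ of $(i)$, and $\lambda$ is read off from any one mode. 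If $\mu\ne0$, Lemma~\ref{Q=0} gives $Q(u)=0$ and translation is no longer free; instead I keep $u=P(z)e^{bz-\frac12|z|^2}$ and match the polynomial identity $\lambda P+\mu zP'+\mu b zP=C\,Q(z)$. The vanishing of the top coefficients of $Q$ translates into $P(\bar b)=P'(\bar b)=\dots=P^{(n-2)}(\bar b)=0$, so $\bar b$ is a root of $P$ of multiplicity $\ge n-1$; a further analysis of the remaining free root, together with $Q(u)=0$, shows that either $u$ collapses into the $\phi_n^\alpha$ family (forcing $\mu=0$) or $\deg P=1$, the latter being exactly $\psi_b$, which gives $(ii)$.

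\emph{Step 4 (the case $|a|=\tfrac12$, i.e. $u\in\widetilde{\mathcal E}\setminus\mathcal E$).} Rotating so that $a=\tfrac12$, the ray analysis of Step~1 forces $b\in i\R$. Here the quadratic form in the exponent of the Gaussian integral degenerates ($\det=0$), so the integral of Step~2 must be evaluated by a direct limiting computation rather than the generic formula. Carrying this out and matching coefficients forces $P$ to be constant (giving the $M$-stationary family $(iii)$) or of degree one with a purely imaginary root (giving the $MP$-stationary family $(iv)$); the parameters $A,r,s,\lambda,\mu$ are then fixed by the coefficient identities, and the values of $\mathcal H,M,P,Q$ are obtained by direct Gaussian integration.

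I expect the main obstacle to lie in the bookkeeping of Steps~3 and~4: evaluating $\Pi(|u|^2u)$ in closed form for the polynomial-times-Gaussian ansatz, handling the degenerate integral when $|a|=\tfrac12$, and extracting from the coefficient identities the sharp conclusion that $\deg P\le1$ once $\mu\ne0$. The complex-analytic Step~1 and the exponent comparison of Step~2 are by contrast robust and essentially dictate the form of the answer.
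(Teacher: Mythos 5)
Your proposal follows essentially the same route as the paper's proof in Paragraph~\ref{para44}: reduce $u$ to the form $P(z)e^{az^2+bz-\frac12|z|^2}$ by complex analysis (the paper uses Borel--Carath\'eodory plus Liouville where you invoke Hadamard --- equivalent), compute $\Pi(|u|^2u)$ via Gaussian integrals, and match coefficients, with the dichotomy $a=0$ versus $|a|=\tfrac12$ arising exactly as in the paper's identity $A=\frac{A}{2(1-2|A|^2)}$; your a priori ray analysis giving $|a|\le\tfrac12$ and $b\in i\R$ from temperedness is a mild (and valid) variant of the paper's derivation of the same constraints from the equation. Two caveats. First, your claim in Step~4 that the Gaussian quadratic form degenerates when $|a|=\tfrac12$ is incorrect: with $c=2A$, $d=\overline A$ the relevant quantity is $1-cd=1-2|A|^2=\tfrac12\neq0$, so the generic formula \eqref{cormorant} applies directly and no limiting computation is needed (degeneracy would occur at $|A|=1/\sqrt2$, which is already excluded by temperedness). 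This is a harmless misconception, but the ``direct limiting computation'' you announce is a non-issue. Second, the portion you defer as ``a further analysis of the remaining free root'' in the case $\deg Q=1$, $\mu\neq0$ is where the paper does most of its actual work: after reducing to $P(z)=(z-\overline\alpha)^n+\beta(z-\overline\alpha)^{n-1}$ one must write out and solve the system \eqref{osprey1}--\eqref{osprey4}, rule out $n\ge2$ with $\beta\neq0$ via the impossibility of $4\tfrac{(2n-2)!}{(n-1)!}=\tfrac{(2n)!}{(n+1)!}$, and resolve the $n=1$ case (including the argument fixing the phases of $\alpha$ and $\beta$) to obtain the one-parameter family $\psi_b$. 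Your sketch asserts the conclusion without indicating how the contradiction for $n\ge2$ or the parametrization by $b$ emerges, so this step is a deferral of the main computation rather than a completed argument; the overall structure, however, is sound and matches the paper's.
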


We postpone the proof of Theorem~\ref{thm41} to Paragraph~\ref{para44}, and refer to Section~\ref{dictionary} for the expression of these stationary waves in different coordinates.

\subsection{An invariant three--dimensional submanifold}
As a consequence of identifying $\psi_b$ in Theorem~\ref{thm41} as a stationary wave, we prove that the three--dimensional manifold 
\begin{equation}\label{oriole}
u(z)=(\lambda z+\mu)\, e^{\alpha z-\frac{|z|^2}{2}}\ ,\ \lambda \in {\mathbb C}^*\ ,\ \mu \in {\mathbb C}\ ,\ \alpha \in {\mathbb C}\ ,
\end{equation}
is invariant by the flow of \eqref{LLL}. This allows to recover results of \cite{BBCE} which were obtained by a direct calculation. 
\begin{prop}\label{prop43}
 For all $(\lambda, \mu,\alpha )\in \C^* \times \C \times \C$, there exists $(c,\varphi, a, b)\in \C ^*\times \T \times \C \times  \R$  such that
$$
(\lambda z + \mu) e^{\alpha z - \frac{1}{2} |z|^2} =c L_{\varphi}R_{a}\left [  \Big( z-\frac{b(2+b^2)}{1+b^2}  \Big) e^{- \frac{1}{2}|z|^2+\frac{b}{b^2+1} z}\right ].
$$
Thus, up to the symmetries of the equation, every solution to \eqref{LLL} corresponding to an initial condition of the form \eqref{oriole}, is a stationary wave.
\end{prop}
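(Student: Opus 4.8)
The plan is to prove the displayed identity directly, by computing how scalar multiplication, space rotations $L_\varphi$ and magnetic translations $R_a$ act on the family \eqref{oriole}, extracting an invariant that labels the orbits, and matching it against $\psi_b$; the flow statement then follows by commutation.

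First I would record the action of the symmetries on the coefficients $(\lambda,\mu,\alpha)$. Using the definition \eqref{defR} of $R_a$ and completing the square in the Gaussian, one finds
$$R_a\big[(\lambda z+\mu)e^{\alpha z-\frac12|z|^2}\big]=e^{\alpha a-\frac12|a|^2}\,\big(\lambda z+(\lambda a+\mu)\big)e^{(\alpha-\overline a)z-\frac12|z|^2},$$
the key point being that the cross term $\overline z\,a$ cancels, so the family is preserved; similarly
$$L_\varphi\big[(\lambda z+\mu)e^{\alpha z-\frac12|z|^2}\big]=(\lambda e^{i\varphi}z+\mu)e^{\alpha e^{i\varphi}z-\frac12|z|^2}.$$
Thus, in the scale–invariant coordinates $m:=\mu/\lambda$ (the single zero sits at $-m$) and $\alpha$, the symmetries act by $R_a:(m,\alpha)\mapsto(m+a,\alpha-\overline a)$ and $L_\varphi:(m,\alpha)\mapsto(e^{-i\varphi}m,e^{i\varphi}\alpha)$, while scalar multiplication acts trivially.

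Next I would isolate the invariant. The quantity $J:=m+\overline\alpha$ is unchanged by $R_a$ and multiplied by $e^{-i\varphi}$ under $L_\varphi$, so $|J|$ is invariant under the whole group generated by $c$, $L_\varphi$ and $R_a$. A short computation from the explicit $\psi_b$ of Theorem~\ref{thm41}(ii), for which $m=-\frac{b(2+b^2)}{1+b^2}$ and $\alpha=\frac{b}{1+b^2}$, gives $J(\psi_b)=-b$, hence $|J|=b$. This identifies $b=|m+\overline\alpha|\ge 0$ as the natural label of any datum \eqref{oriole}. To obtain the identity I then verify that $|J|$ is a complete invariant: given $(\lambda,\mu,\alpha)$, set $b=|m+\overline\alpha|$ and choose $\varphi$ with $e^{-i\varphi}J=-b$, landing on the level set $\{J=-b\}$ that contains the canonical pair $(m_b,\alpha_b)$ of $\psi_b$; on this set the magnetic translations act transitively, since the shift $a=m_b-m$ sends $(m,\alpha)$ exactly to $(m_b,\alpha_b)$ (one checks $\overline\alpha-a=\overline{\alpha_b}$ using $J=-b$). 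After this the two functions coincide up to a multiplicative constant, absorbed into $c\in\C^*$; the degenerate case $b=0$, where $\psi_0\propto\phi_1$, is treated identically. I expect this transitivity/normalization bookkeeping to be the only delicate point; the rest is straightforward algebra.

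Finally, the concluding assertion follows from commutation with the flow. The maps $L_\varphi$ and $R_a$ leave $\mathcal H$ invariant and hence commute with $\varphi_{\mathcal H}(t)$, while $u\mapsto cu$ intertwines the flow with a time rescaling, $\varphi_{\mathcal H}(t)(cu_0)=c\,\varphi_{\mathcal H}(|c|^2t)(u_0)$. Applying these to the stationary–wave solution $\psi_b(e^{-i\mu t}\cdot)e^{-i\lambda t}$ furnished by Theorem~\ref{thm41}, and using that conjugating a magnetic translation by a space rotation is again a magnetic translation, one sees that the solution issued from any datum \eqref{oriole} remains for all time a symmetry image of a $\psi_b$ stationary wave, and in particular stays within the family \eqref{oriole}; this gives both the stated identity's consequence and the invariance of the three–dimensional manifold.
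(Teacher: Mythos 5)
Your proof is correct, but it follows a genuinely different route from the paper's. The paper normalizes the datum by means of the conserved magnetic momentum: it first applies an $R_a$ to reach $\int_\C z|u|^2\,dL=0$, then an $L_\varphi$ to make $\alpha$ real and a scalar to make $\lambda=1$, and finally computes the constraint $Q(u)=0$ explicitly (via the Gaussian identity \eqref{bullfinch}) as a quadratic equation in $\mu$ whose solvability forces $|\alpha|\le\frac12$, i.e. $\alpha=\frac{b}{1+b^2}$ and $\mu=-\frac{b(2+b^2)}{1+b^2}$. You instead compute the affine action of the symmetry group on the parameters $(m,\alpha)=(\mu/\lambda,\alpha)$, exhibit $J=m+\overline\alpha$ as a relative invariant ($R_a$-invariant, multiplied by $e^{-i\varphi}$ under $L_\varphi$), observe that $J(\psi_b)=-b$, and prove transitivity on each level set $\{|J|=b\}$; I checked the key identities ($R_a:(m,\alpha)\mapsto(m+a,\alpha-\overline a)$, hence $J$ invariant, and the recovery of $\alpha_b$ from $J=-b$ after translating $m$ to $m_b$) and they are right. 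Your approach buys a cleaner and more elementary argument — no Gaussian integral, no discriminant analysis — and a little more information, namely that $|m+\overline\alpha|$ is a \emph{complete} invariant of the group action on the family \eqref{oriole}, so the parameter $b\ge0$ is uniquely determined. The paper's approach buys a conceptual link to the conserved quantity $Q$, which is reused elsewhere (e.g. in the variational analysis of Section~\ref{Sect7}). Your concluding flow argument (symplectic symmetries commute with $\varphi_{\mathcal H}$, and $u\mapsto cu$ intertwines the flow with the time rescaling $t\mapsto|c|^2t$) is the standard one and matches what the paper leaves implicit.
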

\begin{proof} It is clear that multiplication by $c\in \C^*$, action of $L_\varphi $ and of $R_a$ act on the manifold defined by \eqref{oriole}. With an operator $R_a$ we can reduce to the case when $\int_{\mathbb{C}}  z\vert u(z)\vert^2 \,dL(z)=0$. Then  the transform $L_{\varphi}$ allows to  reduce to the case $\alpha\in \R$, and by  multiplication by $c$ we can assume that $\lambda=1$. Hence, we are reduced to 
\begin{equation}\label{vinc}
0=\int_{\mathbb{C}}  z\vert u(z)\vert^2 \,dL(z) =  \pi \big( \ov{\mu}   {\alpha}^2 +{\mu} \alpha^2  +\alpha  |\mu|^2+  \alpha^3 +2\alpha + {\mu} \big) e^{\alpha^2},
\end{equation}
with $\alpha \in \R$ --- the calculation can be easily made using identity \eqref{bullfinch} below. We now claim that~\eqref{vinc} is satisfied if and only if there exists $b\in \R$ such that $\alpha=\frac{b}{b^2+1}$ and $\mu=-\frac{b(2+b^2)}{1+b^2}$, and this will complete the proof. 

Firstly, if \eqref{vinc} holds true, necessarily $\mu\in \R$, and we are led to  study the zeros of the second order polynomial $F(\mu)= \alpha \mu^2+ (2\alpha ^2+1)\mu+\alpha(2+\alpha^2).$ The critical value of $F$ is $\frac{1}{\alpha}(\alpha-\frac12)(\alpha+\frac12)$, thus $F$ admits a zero if and only if $-\frac12\leq \alpha \leq \frac12$. In this case, there exists $b\in \R$ such that $\alpha =\alpha(b)= \frac{b}{b^2+1}$, and we  obtain  that the zeros are $\mu_1(b)=-\frac{b(2+b^2)}{1+b^2}$ or $\mu_2(b)=-\frac{(2b^2+1)}{b(1+b^2)}$. This yields the claim, since $\alpha(b)=\alpha(1/b)$ and $\mu_2(b)=\mu_1(1/b)$.
\end{proof}

\subsection{Proof of the classification result}\label{para44}

We will simply solve the equations
$$
\lambda u = \Pi  \big( |u|^2 u\big) \quad \mbox{and} \quad \lambda u + \mu \Lambda  u = \Pi \big( |u|^2 u\big),
$$
over $\lambda, \mu \in \mathbb{R}$, and $u \in \widetilde{\mathcal{E}}$. First we need a result describing functions in $\widetilde{\mathcal{E}}$ with a finite number of zeros.

\bigskip
\noindent
\underline{Step 1: functions $u \in \widetilde{\mathcal{E}}$ with a finite number of zeros.} Write $u(z) = e^{-\frac{1}{2}|z|^2} f(z)$,  let $z_1 \dots z_k$ be the zeros of $f$ and define $P(z)=\prod_{j=1}^k(z-z_j)$. Then $\frac{f(z)}{P(z)}$ is an entire function which does not vanish, thus it can be written $\frac{f(z)}{P(z)} = e^{Q(z)}$, where $Q$ is an entire function. By the bounds on $u$, $Q$ is such that $\mathfrak{Re} Q(z) \lesssim \< z\>^2 $. The Borel-Caratheodory lemma implies that $|Q(z)|$ enjoys the same bounds, namely $|Q(z)| \lesssim  \<z\>^2$, which means, by the Liouville theorem, that $Q$ is a polynomial of degree at most 2. As a conclusion, any function satisfying the hypotheses of the proposition is of the type $u(z) = P(z) e^{Q(z)-\frac{1}{2}|z|^2}$, where $P$ and $Q$ are polynomials, and the degree of $Q$ is at most 2.

\bigskip
\noindent \underline{Step 2: $Q$ of degree 1, $\mu = 0$.} We look for $u$ of the form $P(z) e^{\alpha z - \frac{1}{2}|z|^2}$, with $\alpha \in \mathbb{C}$, and $P$ a polynomial, solving $\lambda u = \Pi |u|^2 u$. Recall the Gaussian integral identity
\begin{equation}
\label{gaussian}
\frac{1}{\pi} \int_\mathbb{C} e^{-2|w|^2 + aw + b\overline{w}} dL(w) = \frac{1}{2} e^{\frac{ab}{2}} \qquad \mbox{if $a,b \in \mathbb{C}$}.
\end{equation}
For any polynomial $P$ in $w, \overline{w}$, this implies
\begin{equation}
\label{bullfinch}
\frac{1}{\pi} \int_\mathbb{C} P(w,\overline{w}) e^{-2|w|^2 + aw + b\overline{w}} dL(w) = P(\partial_ a,\partial_b) \frac{1}{2} e^{\frac{ab}{2}}.
\end{equation}
Therefore
\begin{align}
\Pi (|u|^2 u)  (z) & = \frac{e^{-\frac{|z|^2}{2}}}{\pi} \int e^{-2|w|^2 + z\overline{w} + 2 \alpha w + \overline{\alpha w}} P(w)^2 \overline{P(w)}\,dL(w) \nonumber \\
& = \left. \frac{e^{-\frac{|z|^2}{2}}}{2} P(\partial_a)^2 \overline{P}(\partial_b)  e^{\frac{ab}{2}} \right|_{\substack{a = 2\alpha \\ b = \overline{\alpha} + z}}\nonumber \\
& =  \left. \frac{e^{-\frac{|z|^2}{2}}}{2} \overline{P}(\partial_b)   P\left(\frac{b}2\right)^2 e^{\frac{ab}{2}} \right|_{\substack{a = 2\alpha \\ b = \overline{\alpha} + z}}.\label{las}
\end{align}
Let $n\geq 0$ be the degree of $P$; the Taylor expansion of the polynomial $\ov{P}$ at point $a/2$ gives
\begin{equation*}
\ov{P}(\partial_b)=\ov{P}\left(\frac{a}2\right)+\ov{P}'\left(\frac{a}2\right)\left(\partial_b-\frac{a}2\right)+\cdots + \frac{1}{n!} \ov{P}^{(n)}\left(\frac{a}2\right)\left(\partial_b-\frac{a}2\right)^n.
\end{equation*}
Observe that $(\partial_b-\frac{a}2) e^{\frac{ab}{2}}=0$, then by \eqref{las} we get
\begin{eqnarray}
\Pi (|u|^2 u)  (z) &=&   \left. \frac12 e^{-\frac{|z|^2}{2}+\frac{ab}{2}}  \sum_{k=0}^n \frac{1}{k!} \ov{P}^{(k)}\left(\frac{a}2 \right)\partial^k_b\Big(  P\left(\frac{b}2\right)^2 \Big)\right|_{\substack{a = 2\alpha \\ b = \overline{\alpha} + z}}\nonumber\\
&=&   \left.\frac12 e^{-\frac{|z|^2}{2}+\alpha z+\vert \alpha\vert^2 }  \sum_{k=0}^n \frac{1}{k!} \ov{P}^{(k)}(\alpha)\partial^k_b\Big(  P\left(\frac{b}2\right)^2 \Big)\right|_{{  b = \overline{\alpha} + z}}.\label{sta}
\end{eqnarray}
If $u$ solves $\lambda u = \Pi (|u|^2 u)$, then the polynomial in $z$ appearing in the r.h.s. must have degree $n$. This is the case if and only if $\ov{P}^{(k)}(\alpha)=0$ for all $0\leq k\leq n-1$, hence $P$ takes the form $P(z)=A(z-\ov{\alpha})^n$, with $A \in \mathbb{C}$. Conversely, with \eqref{sta} we check that $u(z)=A(z-\ov{\alpha})^n e^{\alpha z - \frac{1}{2}|z|^2}$ is a stationary wave. There remains to normalize it to have mass one, giving $\phi^\alpha_n$.

\bigskip
\noindent \underline{Step 3: $Q$ of degree 1, $\mu \neq 0$.} Proceeding as in the previous step, for $u$ of the form $P(z) e^{\alpha z - \frac{1}{2}|z|^2}$, the equation $\lambda u + \mu \Lambda u = \Pi |u|^2 u$ is equivalent to the equality between polynomials
\begin{equation}
\label{redhawk}
\lambda P + \mu z P' + \alpha \mu z P =  \left.\frac12 e^{\vert \alpha\vert^2 }  \sum_{k=0}^n \frac{1}{k!} \ov{P}^{(k)}(\alpha)\partial^k_b\Big(  P\left(\frac{b}2\right)^2 \Big)\right|_{{  b = \overline{\alpha} + z}}.
\end{equation}
If $P$ has degree $n$, the polynomial on the l.h.s. has degree $n+1$, so this must be the degree of the polynomial on the r.h.s. This is only possible if $P^{(k)}(\alpha) = 0$ for $0 \leq k \leq n-2$, in other words, $P(z) = (z-\ov  \alpha)^n + \beta (z - \ov  \alpha)^{n-1}$ - taking without loss of generality the coefficient of $(z-\ov  \alpha)^n$ to be $1$.

With this form for $P$, we now expand the two sides of the above equation:
\begin{align*}
& LHS~\eqref{redhawk} = \Big[ (z-\overline{\alpha})^{n+1} \mu \alpha + (z-\overline{\alpha})^{n} (\mu n + \mu |\alpha|^2 + \beta \mu \alpha + \lambda) \\
& \qquad \qquad \qquad  \qquad  + (z-\overline{\alpha})^{n-1} (\lambda \beta+ \overline{\alpha} \mu n + \mu \beta (n-1) + \mu \beta |\alpha|^2) + (z-\overline{\alpha})^{n-2}\ov{\alpha} \mu \beta (n-1) \Big]\\[5pt]
& RHS~\eqref{redhawk} = \frac{1}{2} e^{ |\alpha|^2}  \left[ (z-\overline{\alpha})^{n+1} \frac{\overline{\beta} (2n)!}{2^{2n} (n+1)!} + (z-\overline{\alpha})^{n} \left( \frac{|\beta|^2 (2n-1)!}{2^{2n-2} n!} + \frac{(2n)!}{2^{2n} n!} \right) \right. \\
& \qquad \qquad \qquad  \qquad \left. +  (z-\overline{\alpha})^{n-1} \left( \frac{\beta(2n-1)!}{2^{2n-2}(n-1)!} + \frac{|\beta|^2 \beta (2n-2)!}{2^{2n-2} (n-1)!} \right) + (z-\overline{\alpha})^{n-2} \frac{\beta^2 (2n-2)!}{2^{2n-2} (n-2)!} \right]
\end{align*}
(where the last terms in the above expressions should be canceled if $n=1$). Identifying the coefficients and setting $(\mu',\lambda')=  2^{2n+1} (\mu,\lambda) e^{-|\alpha|^2}$ gives the system
\begin{subequations}
\begin{align}
\label{osprey1} &\mu' \alpha = \frac{\overline{\beta} (2n)!}{(n+1)!} \\
\label{osprey2} &\mu' n + \mu' |\alpha|^2 + \beta \mu' \alpha + \lambda' = \frac{4|\beta|^2(2n-1)!}{n!} + \frac{(2n)!}{n!} \\
\label{osprey3} &\lambda' \beta + \overline{\alpha} \mu' n + \mu' \beta (n-1) + \mu' \beta |\alpha|^2 = \frac{4\beta(2n-1)!}{(n-1)!} + \frac{4 |\beta|^2 \beta (2n-2)!}{(n-1)!} \\
\label{osprey4} &\mu' \ov{\alpha} \beta (n-1) = \frac{4\beta^2(2n-2)!}{(n-2)!}
\end{align}
\end{subequations}
(where the last line should be canceled if $n=1$). We now need to distinguish between the cases $n=1$ and $n>1$. \medskip

If $n=1$,~(\ref{osprey1}) gives $\mu' = \frac{\overline{\beta}}{\alpha}$  (unless $\alpha = 0$, but then $\beta = 0$ and we are back to step 2). Plugging this value of $\mu '$ in~(\ref{osprey2}) leads to $\lambda' = 3 |\beta|^2 + 2 - \frac{\overline{\beta}}{\alpha} - \overline{\alpha \beta}$, and  using this value of $\lambda'$ in~(\ref{osprey3}) gives the equation $|\beta|^2 \beta + 2 \beta + \frac{|\beta|^2}{\alpha}  {- \frac{\ov{\alpha \beta}}{\alpha}} = 0$. If $\beta=0$ we get the $M-$stationary wave $u(z)=A(z-\ov{\alpha}) e^{\alpha z - \frac{1}{2}|z|^2}$. Thus we can assume $\beta\neq 0$ and set $\alpha = a e^{i \varphi}$  and $\beta = b e^{i\psi}$ with $a,b>0$. We then observe that $X=e^{-i(\varphi+\psi)}$ satisfies $X^2-\frac{b}{a}X-(b^2+2)=0$. If $X\neq 1,-1$ this yields a contradiction because then $1=\vert X\vert^2=-(b^2+2)$. Finally we obtain $\beta = -b e^{-i\varphi}$ with $a=\frac{b}{b^2+1}$.\medskip

If $n \geq 2$, and $\beta \neq 0$,~\eqref{osprey4} gives that $\mu' = \frac{4 \beta (2n-2)!}{{\ov{\alpha}}(n-1)!}$, which implies first that $\alpha=ae^{i\varphi}$ and  $\beta=be^{-i\varphi}$ for some $a,b,\varphi\in \R$. Second, inserting this value of $\mu'$ in~\eqref{osprey1} leads to $4\frac{(2n-2)!}{(n-1)!}=\frac{(2n)!}{(n+1)!}$ which is impossible.

This leaves us with the stationary wave $u_b(z) = \left( z - \frac{b(2+b^2)}{1+b^2} \right) e^{-\frac{1}{2}|z|^2 + \frac{b}{1+b^2} z}$, which we need to normalize to have mass one. Using the identity
$$
\int e^{-|w|^2 + aw + c \overline w} \,dL(w) = \pi e^{ac},
$$
we obtain (noticing after the first equality that $-|z|^2 + 2 \mathfrak{Re}\left( \frac{b}{1+b^2} z \right) = - \left| z - \frac{b}{1+b^2} \right|^2 + \left( \frac{b}{1+b^2} \right)^2$)
\begin{align*}
\| u_b \|_{L^2}^2 & = \int \left| z - \frac{b(2+b^2)}{1+b^2} \right|^2 e^{-|z|^2 +  2 \mathfrak{Re}\left( \frac{b}{1+b^2} z \right)}\,dL(z) \\
& = e^{\left( \frac{b}{1+b^2} \right)^2} \int |z -  b|^2  e^{-|z|^2}\,dL(z) \\
&= e^{\left( \frac{b}{1+b^2} \right)^2}  \left. (\partial_a - b)(\partial_c - b) \pi e^{ac} \right|_{a=c=0} \\
 &=\pi(1+b^2)  e^{\left( \frac{b}{1+b^2} \right)^2}.
\end{align*}
This leads to the formula for $\psi_b = \frac{u_b}{\|u_b\|_{L^2}}$; proceeding similarly, one computes $\mathcal{H}(\psi_b)$ and $P(\psi_b)$. By Lemma~\ref{Q=0},  $Q(\psi_b)=0$.

\bigskip
\noindent \underline{Step 4: $Q$ of degree 2, $\mu = 0$.} In other words, we now look for solutions of $\lambda u = \Pi (|u|^2 u)$ of the type $P(z) e^{A z^2 + B z - \frac{1}{2}|z|^2}$, where $A,B \in \mathbb{C}$ and $P$ is a polynomial. We start from the following Gaussian integral.  For any complex numbers $a,b,c,d$ such that the integral converges absolutely,
\begin{eqnarray*}
\frac{1}{\pi} \int e^{-2 |w|^2 + aw + b\overline{w} + cw^2 + d\overline{w^2}}\,dL(w) &=& \frac{1}{2 \sqrt{1-cd}} e^{\frac{(1-cd)(a+b)^2 - (bc-ad+a-b)^2}{4(1-cd)(2-c-d)}}\\
&=&  \frac{1}{2 \sqrt{1-cd}} e^{\frac{da^2+cb^2+2ab}{4(1-cd)}}.
\end{eqnarray*}
Notice that the convergence of the integral implies $\Re (1-cd)>0$, so that the square root of $1-cd$ is defined classically. This identity implies, for a polynomial $P$ of $w$ and $\overline{w}$
\begin{equation}
\label{cormorant}
\frac{1}{\pi} \int e^{-2 |w|^2 + aw + b\overline{w} + cw^2 + d\overline{w^2}}P(w,\overline{w})\,dL(w) = P(\partial_a,\partial_b) \frac{1}{2 \sqrt{1-cd}} e^{\frac{da^2+cb^2+2ab}{4(1-cd)}}.
\end{equation}
Therefore,
\begin{align*}
\Pi (|u|^2 u) (z) & = \frac{e^{-\frac{|z|^2}{2}}}{\pi} \int e^{-2|w|^2 + z\overline{w} + 2 A w^2 + \overline{Aw^2} + 2Bw + \overline{B w}} P(w)^2 \overline{P(w)}\,dL(w) \\
& = \left. e^{-\frac{|z|^2}{2}} P(\partial_a)^2 \overline{P}(\partial_b) \frac{1}{2 \sqrt{1-cd}} e^{\frac{da^2+cb^2+2ab}{4(1-cd)}} \right|_{\substack{a = 2B \\ b = z+\overline{B} \\ c = 2A \\ d = \overline{A}}}. 
\end{align*}
For $u$ to be a stationary wave, the coefficients of $z^2$ and $z$ in $\frac{da^2+cb^2+2ab}{4(1-cd)}$, with $a = 2B$, $b = z+\overline{B}$, $c = 2A$, $d = \overline{A}$, must be $A$ and $B$ respectively. A small computation shows that the coefficients of\;$z^2$ agree if $A = \frac{A}{2(1-2|A|^2)}$, which gives $A=0$ (in which case we are back to step 2), or $|A| = \frac{1}{2}$. By rotation invariance, we can assume $A = \frac{1}{2}$; but then the coefficients of $z$ agree if $B = is$, with $s$ real.

Finally, observe that, if the degree of $P$ is $n$, the degree of the polynomial $Q$ such that $\Pi |u|^2 u = Q(z) e^{-\frac{1}{2} |z|^2+ \frac{1}{2} z^2 + is z}$, as determined by the formula above, is $3n$. Therefore, $n=0$.

\bigskip
\noindent \underline{Step 5: $Q$ of degree 2, $\mu \neq 0$.} Proceeding as in the previous step, any solution of $\lambda u + \mu \Lambda  u = \Pi |u|^2 u$ of the type $P(z) e^{A z^2 + B z - \frac{1}{2}|z|^2}$ is such that $A=0$, a case which we already examined, or $|A| = \frac{1}{2}$ and $B = is$, to which we now turn. Moreover, one realizes quickly that either $n=0$ (but this case has already been considered) or $n=1$, which we now examine. Therefore, write $u(z) = (z + \gamma) e^{-\frac{1}{2} |z|^2 + \frac{1}{2} z^2 + is z}$; computing using the above formula leads to
\begin{align*}
& (\lambda + \mu \Lambda )u(z) = \big[ \mu z^3 + (\mu is+\mu \gamma) z^2 + (\lambda + \mu + \mu\gamma is)z + \lambda \gamma \big] e^{-\frac{1}{2} |z|^2+ \frac{1}{2} z^2 + is z} \\
& \Pi( |u|^2 u)(z) = \frac{1}{\sqrt{2}} \left[ z^3 + \left(is + \overline{\gamma} + 2 \gamma \right) z^2 + \left( \frac{5}{2} + \gamma^2 + 2is\gamma + 2 |\gamma|^2 \right) z \right. \\
&\qquad \quad\qquad \qquad \qquad \left.+ \left(2\gamma + \frac{1}{2}is + \frac{1}{2} \overline{\gamma} + is\gamma^2 + |\gamma|^2 \gamma \right)\right]e^{-\frac{1}{2} |z|^2+ \frac{1}{2} z^2 + is z}.
\end{align*}
Identifying the coefficients of the powers of $z$, we find that $s=0$, $\gamma$ is pure imaginary: $\gamma = ir$, with\;$r$ real, $\mu = 1/\sqrt{2}$ and $\lambda = (\frac{3}{2} +r^2)/\sqrt{2}$.
\bigskip

\subsection{Construction of stationary waves by bifurcation from $\phi_0$}

While we only treat the case of $\phi_0$, identical arguments give bifurcation from the $\phi_n$, with $n \geq 1$. Recall the definition of the spaces $C_\eps$ given in \eqref{defC}.

\begin{prop}\label{bifurc}
For $k_0 \geq 2$ an integer, there exists, for $s \in \R$ sufficiently small, $MP$-stationary waves
$$
u = u_{k_0,s} = \sum_{\ell =0}^{+\infty} q_\ell (s) \varphi _{\ell k_0}=\varphi_0 + s \varphi_{k_0} + \mathcal{O}(s^2)
$$
(where $\mathcal{O}(s^2)$ is understood for the topology of $C_\eps$), which solve
$$
a u + b \Lambda u = 8\pi \Pi(|u|^2 u),
$$
with $a = 4$ and $\left|b - \frac{1}{k_0}\left(4 - \frac{8}{2^{k_0}} \right)\right| \lesssim s$.

Moreover, for all $\eps>0$, there exist $K_\eps>0$ and $s_\eps>0$ such that 
\begin{equation}\label{borneu}
|u(z)| \leq K_\eps e^{\eps |z|-\frac12|z|^2}
\end{equation}
for all $0 \leq s \leq s_{\eps}$.
\end{prop}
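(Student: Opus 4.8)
The plan is to construct $u_{k_0,s}$ as a symmetry-breaking bifurcation branch emanating, as the amplitude $s$ increases from $0$, from the explicit solution $\varphi_0$, working throughout in the Banach space $C_\eps$ of \eqref{defC}. Two structural observations set the stage. First, since $\Lambda\varphi_0=0$ and a direct evaluation of the cubic interaction gives $8\pi\Pi(|\varphi_0|^2\varphi_0)=4\varphi_0$, the function $\varphi_0$ solves $a\varphi_0+b\Lambda\varphi_0=8\pi\Pi(|\varphi_0|^2\varphi_0)$ for every $b$ precisely when $a=4$; this forces the normalization $a=4$ and exhibits $\{(b,\varphi_0):b\in\R\}$ as a trivial branch from which to bifurcate. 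Second, the relation $k+\ell=m+n$ shows that the cubic map $\TT$ of Proposition~\ref{propC} preserves the subspace spanned by the modes $\varphi_{\ell k_0}$, $\ell\ge0$; moreover, since the interaction coefficients are real and positive, $\TT$ preserves the closed subspace $W\subset C_\eps$ of series $\sum_{\ell\ge0}q_\ell\varphi_{\ell k_0}$ with real coefficients $q_\ell$. Restricting to $W$ simultaneously confines us to the invariant subspace and fixes the phase and space-rotation gauges (reality is a slice for the $T_\gamma$ and $L_\phi$ actions). By the proof of Proposition~\ref{propC}, $\TT$ is bounded trilinear, hence smooth, so $G(b,u):=4u+b\Lambda u-8\pi\Pi(|u|^2u)$ is a smooth map $\R\times W\to W$ with $G(b,\varphi_0)=0$.

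Next I would linearize along the trivial branch. Writing $h=\sum_\ell h_\ell\varphi_{\ell k_0}\in W$, a short computation in the $(c_n)$ coordinates shows that $D_uG(b,\varphi_0)$ is diagonal in the basis $(\varphi_{\ell k_0})$: on mode $\varphi_{\ell k_0}$ with $\ell\ge1$ it acts by multiplication by $E_\ell(b)=4+b k_0\ell-\tfrac{8}{2^{\ell k_0}}$, while on the (now one-dimensional, real) mode $\ell=0$ it acts as $h_0\mapsto-8h_0$. The crossing condition $E_1(b_*)=0$, i.e. $4+k_0b_*=\tfrac{8}{2^{k_0}}$, fixes the bifurcation value $b_*$, whose magnitude is exactly $\tfrac1{k_0}\bigl(4-\tfrac{8}{2^{k_0}}\bigr)$ as in the statement. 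One then checks that $E_\ell(b_*)\neq0$ for all $\ell\ge2$ (the affine term $4+b_*k_0\ell$ dominates the exponentially small $\tfrac{8}{2^{\ell k_0}}$, and $E_\ell(b_*)\to-\infty$), so that on $W$ the kernel of $D_uG(b_*,\varphi_0)$ is the simple line $\R\varphi_{k_0}$, the eigenvalues $E_\ell(b_*)$ are bounded away from $0$ on its complement, and $D_uG(b_*,\varphi_0)$ is Fredholm of index $0$ with bounded inverse off the kernel. Finally $\partial_b D_uG(b_*,\varphi_0)\varphi_{k_0}=\Lambda\varphi_{k_0}=k_0\varphi_{k_0}$ spans the cokernel and thus lies outside the range, which is the Crandall--Rabinowitz transversality condition.

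With these hypotheses verified on $W$, the Crandall--Rabinowitz theorem (equivalently, a Lyapunov--Schmidt reduction to the one-dimensional kernel followed by the implicit function theorem) produces, for $|s|$ small, a smooth branch $s\mapsto(b(s),u(s))\in\R\times W$ with $u(s)=\varphi_0+s\varphi_{k_0}+\mathcal O(s^2)$ in $C_\eps$ and $b(0)=b_*$; smoothness of $G$ upgrades the remainder to $\mathcal O(s^2)$ and yields $b(s)=b_*+\mathcal O(s)$, i.e. the claimed bound on $b$. An equivalent, more hands-on route is to insert the power series $u=\varphi_0+s\varphi_{k_0}+\sum_{j\ge2}s^ju^{(j)}$ and $b=b_*+\sum_{j\ge1}s^jb^{(j)}$ and solve order by order: at order $j$ the equation reads $D_uG(b_*,\varphi_0)u^{(j)}=(\text{lower-order data})$, and its solvability --- orthogonality of the right-hand side to the cokernel $\varphi_{k_0}$ --- determines $b^{(j-1)}$, exactly the role played by the transversality relation above.

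It remains to prove \eqref{borneu}. Since $u(s)$ depends continuously on $s$ into $C_\eps$ with $u(0)=\varphi_0$ and $\|\varphi_0\|_{C_\eps}=1$, there is $s_\eps>0$ such that $\|u(s)\|_{C_\eps}\le2$ for $0\le s\le s_\eps$; by the definition of the $C_\eps$ norm this means the coordinates of $u$ satisfy $|c_k|\le 2\,\eps^k/\sqrt{k!}$, whence
$$
|u(z)|\le\sum_{k\ge0}|c_k|\,\frac{|z|^k}{\sqrt{\pi\,k!}}\,e^{-\frac12|z|^2}\le\frac{2}{\sqrt\pi}\,e^{-\frac12|z|^2}\sum_{k\ge0}\frac{(\eps|z|)^k}{k!}=\frac{2}{\sqrt\pi}\,e^{\eps|z|-\frac12|z|^2},
$$
which is \eqref{borneu} with $K_\eps=2/\sqrt\pi$. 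I expect the main obstacle to be not this last estimate, which is routine, but the symmetry bookkeeping underlying the bifurcation: because the base point $\varphi_0$ is fixed by the space-rotation symmetry whereas the bifurcating mode $\varphi_{k_0}$ transforms nontrivially, the naive linearization carries a two-real-dimensional kernel $\C\varphi_{k_0}$, and it is precisely the restriction to the real-coefficient slice $W$ (which fixes both the phase and rotation gauges) that reduces this to a simple eigenvalue and makes the Crandall--Rabinowitz/implicit-function argument applicable while keeping the whole branch inside $C_\eps$, as required for the pointwise bound.
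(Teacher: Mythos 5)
Your proposal is correct and follows essentially the same route as the paper: a Crandall--Rabinowitz bifurcation from $\varphi_0$ in the space of $C_\eps$-series supported on multiples of $k_0$, with the same diagonal linearization $4+bk_0\ell-8/2^{\ell k_0}$ on the mode $\ell\ge 1$, the same crossing condition at $\ell=1$, the same transversality condition via $\Lambda\varphi_{k_0}=k_0\varphi_{k_0}$, and the same termwise summation of $|c_k|\le K_\eps\,\eps^k/\sqrt{k!}$ for \eqref{borneu}. Two small remarks: your restriction to the real-coefficient slice $W$ is a welcome refinement over the paper (it makes the kernel genuinely one-dimensional, whereas over $\C$ the directions $i\varphi_0$ and $i\varphi_{k_0}$ also lie in the kernel, a point the paper leaves implicit), but the target space of $G$ must be taken to be $(1+\Lambda)W$ rather than $W$ since $\Lambda$ is unbounded on $C_\eps$ --- exactly as the paper does --- and your computed value $b_*=\frac{1}{k_0}\bigl(\frac{8}{2^{k_0}}-4\bigr)$ matches the paper's proof up to the same sign tension that already exists between the paper's proof and its statement.
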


\begin{rem}
By Theorem~\ref{thm41}, for all $0 < s \leq s_{\eps}$, such a function has an infinite number of zeros. Indeed, 
none of the stationary waves listed in Theorem~\ref{thm41} has the property
$$u= \sum_{\ell =0}^{+\infty} q_\ell  \varphi _{\ell k_0}$$
for some $k_0\ge 2$. 
\end{rem}

\begin{proof}
Let $\eps >0$. Recall that $C_{\eps}$ is given by the norm $ \sup_{k\geq 0} \frac{\sqrt{k!}}{\eps^k} |c_k|$; abusing notations, we will identify the sequence $(c_n)$ and the corresponding function $\sum_n c_n \phi_n$, so that $C_\eps$ becomes a space of functions. We saw in Proposition~\ref{propC} that $(f,g,h) \mapsto \Pi (f \overline g h)$ is bounded from $C_{\eps}^3$ to $C_{\eps}$.

Restricting $C_{\eps}$ to indices which are multiples of $k_0$ gives
$$
C_{k_0,\eps} = \{ (c_k) \in C_{\eps} \; \mbox{such that} \; c_k = 0 \; \mbox{if $k$ is not a multiple of $k_0$} \}.
$$
We will apply the framework in Crandall-Rabinowitz~\cite[Theorem 1.7]{CR}.  Namely, let
$$
F(t,u) = 8\pi \Pi\left[ |\varphi_0 + u|^2 (\varphi_0 + u) \right] + t \Lambda(\varphi_0 + u) - 4(\varphi_0 + u).
$$
Observe that $F$ is a smooth function from $\mathbb{R} \times C_{k_0,\eps}$ to $(1+\Lambda) C_{k_0,\eps}$, such that
\begin{itemize}
\item $F(t,0) = 0$ for all $t$,
\item $\partial_t F(t,u) = \Lambda (\varphi_0 + u)$,
\item $\partial_u F(t,0) (\delta u) = 8 \pi \Pi(2 |\phi_0|^2 \delta u + \varphi_0^2  \overline{\delta u} ) + t \Lambda \delta u - 4 \delta u$ ; equivalently, in the $(c_k)$ coordinates: $\left[\partial_u F(t,0) (\delta u)\right]_k = \left( tk - 4 + \frac{8}{2^k} \right) \delta c_k + 4 \delta_{k,0} \overline{\delta c_k}$,
\item and finally $\partial_t \partial_u F(t,u)(\delta u) = \Lambda \delta u$.
\end{itemize}
Given $k_0 \geq 2$, we choose $t=t(k_0) = \frac{1}{k_0}\left(4 - \frac{8}{2^{k_0}} \right)$ such that $tk_0 - 4 + \frac{8}{2^{k_0}} = 0$ (notice that this determines $k_0$ uniquely if $k_0 \geq 4$, but that the same $t$ corresponds to $k_0 = 2$ and $k_0 =3$).

Since 
\begin{itemize}
\item $\operatorname{Ker} \partial_u F(t(k_0),0) = \operatorname{Span} \varphi_{k_0}$
\item $(\Lambda + 1) C_{k_0,\eps} / \operatorname{Ran}  \partial_u F(t(k_0),0)$ one-dimensional
\item $\partial_t \partial_u F(t,u)(\varphi_{k_0}) = \Lambda \varphi_{k_0}=k_0\varphi_{k_0} \notin \operatorname{Ran}  \partial_u F(t(k_0),0)$,
\end{itemize}
then~\cite[Theorem 1.7]{CR} applies, giving the existence result. 

The estimate \eqref{borneu} directly follows from the estimate $|c_k| \leq K_{\eps} \frac{\eps^k}{\sqrt{k !}}$.
\end{proof}

\section{Variational questions and stability properties}\label{Sect7}

\subsection{Maximizers of $\mathcal{H}$ for $M$ fixed}

The following proposition identifies the maximizers of the Hamiltonian for fixed mass. This result was already proved in~\cite[Theorem 2]{Carlen} via logarithmic Sobolev identities, and it can be deduced from \cite[Theorem 8.2]{FGH}, in the special case of the Bargmann--Fock space $\mathcal E$. We propose here a new, very elementary proof.

\begin{prop}\label{prop51}
If $u\in {\mathcal E}$, namely $u\in L^2(\C )$ and $u\, e^{\vert z\vert ^2/2}$ is entire, then $u\in L^4(\C )$, with the estimate
$$\Vert u\Vert _{L^4(\C )} ^4\le \frac 1{2\pi } \Vert u\Vert _{L^2(\C )} ^4\ .$$
Moreover, the above estimate is an equality if and only if 
$$u(z)=\lambda e^{\alpha z-\frac{\vert z\vert ^2}{2}}\ ,$$
for some $\lambda , \alpha \in \C $.
\end{prop}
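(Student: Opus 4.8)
The plan is to pass to the orthonormal basis $(\varphi_n)$ and reduce the whole statement to a term-by-term Cauchy--Schwarz estimate, thereby avoiding the logarithmic Sobolev machinery of \cite{Carlen}. Writing $u=\sum_{n\ge 0}c_n\varphi_n$, I would start from the identities computed in Paragraph~\ref{para12}, namely $M(u)=\sum_{n\ge 0}|c_n|^2$ and
$$
\|u\|_{L^4}^4=4\mathcal H(u)=\frac{1}{2\pi}\sum_{\ell=0}^{\infty}\frac{1}{2^\ell}\Big|\sum_{n+p=\ell}c_nc_p\binom{\ell}{n}^{1/2}\Big|^2 .
$$
For the inequality, I would fix $\ell$ and apply Cauchy--Schwarz to the inner sum with the positive weights $\binom{\ell}{n}^{1/2}$, using the elementary identity $\sum_{n=0}^{\ell}\binom{\ell}{n}=2^\ell$, to get
$$
\Big|\sum_{n+p=\ell}c_nc_p\binom{\ell}{n}^{1/2}\Big|^2\le 2^\ell\sum_{n+p=\ell}|c_n|^2|c_p|^2 .
$$
Dividing by $2^\ell$ and summing over $\ell$ collapses the right-hand side into a full double sum, yielding $\|u\|_{L^4}^4\le\frac{1}{2\pi}\big(\sum_n|c_n|^2\big)^2=\frac{1}{2\pi}M(u)^2$.

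To justify the series manipulations and the membership $u\in L^4$ in full generality, I would first run the above for the partial sums $u_N=\sum_{n\le N}c_n\varphi_n$, which are polynomials times a Gaussian and hence manifestly in $L^4$, obtaining $\|u_N\|_{L^4}^4\le\frac{1}{2\pi}\|u_N\|_{L^2}^4\le\frac{1}{2\pi}\|u\|_{L^2}^4$ uniformly in $N$. Since $u_N\to u$ in $L^2(\C)$, a subsequence converges almost everywhere, and Fatou's lemma then gives $u\in L^4$ together with the claimed bound.

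The substantive step, which I expect to be the only real obstacle, is the equality case. Equality in the final estimate forces equality in the Cauchy--Schwarz inequality for \emph{every} $\ell$; because the weights $\binom{\ell}{n}^{1/2}$ are positive, this means that for each $\ell$ there is a constant $\nu_\ell\in\C$ with $c_nc_{\ell-n}=\nu_\ell\binom{\ell}{n}^{1/2}$ for all $0\le n\le\ell$. The key simplification is the substitution $g_n=c_n\sqrt{n!}$, which turns this into the statement that $g_ng_{\ell-n}$ is independent of $n$ for each fixed $\ell$. I would then argue that if $u\ne 0$ necessarily $g_0\ne 0$: if $g_0=\cdots=g_{k-1}=0$, comparing the terms $n=0$ (equal to $0$) and $n=k$ (equal to $g_k^2$) at $\ell=2k$ forces $g_k=0$, so $g_0=0$ would propagate to $u\equiv 0$. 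Assuming $g_0\ne 0$ and setting $r=g_1/g_0$, the relation $g_0g_\ell=g_1g_{\ell-1}$ gives $g_\ell=g_0r^\ell$ by induction, hence $c_n=g_0\,r^n/\sqrt{n!}$. Recognising $\sum_n\frac{r^n}{\sqrt{n!}}\varphi_n$ as a multiple of $e^{rz-\frac{|z|^2}{2}}$ produces $u(z)=\lambda e^{\alpha z-\frac{|z|^2}{2}}$; the converse is the one-line check that $g_n=g_0r^n$ makes $g_ng_{\ell-n}=g_0^2r^\ell$ constant in $n$, so these functions indeed saturate the inequality.
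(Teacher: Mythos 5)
Your proposal is correct and follows essentially the same route as the paper's own proof: orthogonality of the products $\varphi_\ell\varphi_0$ to get the identity $\|u\|_{L^4}^4=\frac{1}{2\pi}\sum_\ell 2^{-\ell}\big|\sum_{n+p=\ell}c_nc_p\binom{\ell}{n}^{1/2}\big|^2$, termwise Cauchy--Schwarz with the binomial weights, and the substitution $g_n=\sqrt{n!}\,c_n$ to resolve the equality case as a geometric sequence. Your two added touches --- the Fatou argument on partial sums to justify $u\in L^4$ a priori, and the induction showing $g_0\neq0$ when $u\neq0$ --- are details the paper leaves implicit, and they are both correct.
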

\begin{proof}
The proof is inspired from the one of Lemma 1 of \cite{GGX}. Recall that
$$u=\sum _{n=0}^{+\infty} c_n\varphi _n,\quad \mbox{with} \quad \varphi _n(z)=\frac{1}{\sqrt{\pi n!}} z^n\, e^{-\frac{\vert z\vert ^2}{2}},$$
so that
$$\Vert u\Vert _{L^2(\C )}^2=\sum _{n=0}^{+\infty} \vert c_n\vert ^2\ .$$
We then classically write 
$$\Vert u\Vert _{L^4(\C )}^4=\Vert u^2\Vert _{L^2(\C )}^2\ ,$$
and observe that
\begin{eqnarray*}
u^2&=&\sum _{n,p\ge 0}c_nc_p\varphi _n\varphi _p=\sum _{n,p \geq 0}c_nc_p\left (\frac{(n+p)!}{n!p!}\right )^{1/2}\varphi _{n+p}\varphi _0\\
&=&\sum _{\ell =0}^{+\infty} \left (\sum _{n+p=\ell} c_nc_p\left (\frac{(n+p)!}{n!p!}\right )^{1/2}  \right )\varphi _\ell \varphi _0\ .
\end{eqnarray*}
We notice that the functions $\varphi _\ell \varphi _0$ are orthogonal in $L^2(\C )$ and that
$$\Vert \varphi _\ell \varphi _0�\Vert _{L^2(\C )}^2=\frac 1{\ell !{\pi }^2}\int _\C \vert z\vert ^{2\ell }\, e^{-2\vert z\vert ^2}\, dL(z)=
\frac 1{\pi 2^{\ell +1}}\ .$$
Consequently,
\begin{eqnarray}
\Vert u\Vert _{L^4(\C )}^4&=& \frac{1}{2\pi }\sum _{\ell =0}^{+\infty} \frac 1{2^{\ell }}\left \vert \sum _{n+p=\ell} c_nc_p\left (\frac{(n+p)!}{n!p!}\right )^{1/2}  \right \vert ^2\label{sst}\\
&\le &\frac{1}{2\pi }\sum _{\ell =0}^{+\infty} \frac{1}{2^\ell }\left ( \sum _{n+p=\ell } \frac{(n+p)!}{n!p!}\ \right ) \left ( \sum _{n+p=\ell} \vert c_nc_p\vert ^2 \right )=\frac{1}{2\pi }\Vert u\Vert _{L^2(\C )}^4\ ,\nonumber
\end{eqnarray}
where we used the Cauchy--Schwarz inequality. Furthermore, equality holds if and only if, for every $\ell \geq 0$, there exists $\gamma _\ell $ such that
$$\forall n=0,1,\dots ,\ell ,\  c_nc_{\ell -n}=\gamma _\ell \left (\frac{1}{n!(\ell -n)!}\right )^{1/2} \ ,$$
which is equivalent to 
$$\sqrt{n!}c_n \sqrt{(\ell -n)!}c_{\ell -n}=c_0\sqrt{\ell !}c_\ell\ ,\ n=0,1,\dots ,\ell ,$$
or
$$\sqrt {n!}c_n=\lambda  \alpha ^n$$
for some $\alpha ,\lambda \in \C $. Plugging this information into the formula, we get exactly
$$u(z)=\frac{\lambda }{\sqrt{\pi }} e^{\alpha z-\frac{\vert z\vert ^2}{2}} \ .$$
The proof is complete.
\end{proof}

Next, we aim at classifying maximizing sequences of $\mathcal H$ at $M$ fixed.  This will be achieved through the following profile decomposition lemma, in the spirit of \cite{S}, \cite{G}, \cite{BG}, \cite{MV}, \cite{ST}.

 \begin{lem}
Consider a sequence $(u_n)\in \mathcal E$ with $\|u_n \|_{L^2(\C)} = 1$. Then there exist $(v^j)\in  \mathcal E$, a sequence $(\alpha^j_n) \in \C$ with 
$$|\alpha^j_n-\alpha^k_n| \longrightarrow +\infty, \qquad n\longrightarrow +\infty,\quad j\neq k,$$
and  $(w_n^J)\in  \mathcal E$ with 
$$\limsup_{n\longrightarrow +\infty}\| w^J_n\|_{L^{\infty}(\C)} \longrightarrow 0, \qquad J\longrightarrow +\infty,$$
and such that we have, up to a subsequence, the decomposition 
$$u_n =\sum_{j=1}^J R_{\alpha^j_n}v^j+w^J_n,$$
and for all $J$
$$\sum_{j=1}^J \|v^j\|^2_{L^2(\C)}+ \limsup_{n\longrightarrow +\infty}\| w^J_n\|^2_{L^{2}(\C)} = 1.$$
\end{lem}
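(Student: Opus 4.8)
The non-compactness of a bounded sequence in $L^2_{\mathcal E}$ is governed solely by the magnetic translations $R_\alpha$ of \eqref{defR}, and my plan is a standard iterative extraction of profiles. Three structural facts drive the argument. First, $\mathcal E$ is a reproducing kernel Hilbert space with kernel $K$, so that $u(z)=\langle u,K(\cdot,z)\rangle_{L^2}$; hence point evaluation is a bounded functional, and weak convergence in $L^2_{\mathcal E}$ forces pointwise (indeed locally uniform) convergence. Second, the endpoint case $p=2$, $q=\infty$ of the hypercontractive bound \eqref{hypercontractivity} gives $\|v\|_{L^\infty}\le \pi^{-1/2}\|v\|_{L^2}$ for every $v\in\mathcal E$, so $L^\infty$ is controlled by $L^2$ and in particular $|v(0)|^2\le \pi^{-1}\|v\|_{L^2}^2$. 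Third, every $u\in L^2_{\mathcal E}$ tends to $0$ at infinity (from the Gaussian bound on $K$), while $R_\alpha$ is an $L^2$-isometry preserving $\mathcal E$ and obeys a cocycle relation $R_aR_b=e^{i\theta(a,b)}R_{a+b}$ with $|e^{i\theta(a,b)}|=1$.

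The profiles are built inductively. Set $w_n^0=u_n$ and, having built $w_n^J$, put $\eta_J=\limsup_n\|w_n^J\|_{L^\infty}$. If $\eta_J=0$ the induction stops. Otherwise, after passing to a subsequence along which $\|w_n^J\|_{L^\infty}\to\eta_J$, choose points $z_n$ with $|w_n^J(z_n)|\to\eta_J$ and set $\alpha_n^{J+1}=-z_n$, so that $|R_{-\alpha_n^{J+1}}w_n^J(0)|=|w_n^J(z_n)|\to\eta_J$. By weak compactness of bounded sets in the Hilbert space $L^2_{\mathcal E}$, extract a further subsequence with $R_{-\alpha_n^{J+1}}w_n^J\rightharpoonup v^{J+1}\in\mathcal E$; weak continuity of evaluation at $0$ gives $|v^{J+1}(0)|=\eta_J>0$, so $v^{J+1}\neq0$. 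Define $w_n^{J+1}=w_n^J-R_{\alpha_n^{J+1}}v^{J+1}$, so telescoping yields $u_n=\sum_{j=1}^J R_{\alpha_n^j}v^j+w_n^J$, and a diagonal argument produces a single subsequence valid for all $J$.

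Two orthogonality facts are then established by a joint induction on $J$. (i) $R_{-\alpha_n^j}w_n^J\rightharpoonup 0$ for every $j\le J$: the case $j=J$ is immediate from the definition of $v^J$, and for $j<J$ one writes $R_{-\alpha_n^j}w_n^J=R_{-\alpha_n^j}w_n^{J-1}-e^{i\theta_n}R_{\alpha_n^J-\alpha_n^j}v^J$, the first term tending weakly to $0$ by induction and the second by (ii) below. (ii) The separation $|\alpha_n^j-\alpha_n^k|\to\infty$ for $j\neq k$: if $\alpha_n^J-\alpha_n^j$ remained bounded along a subsequence for some $j<J$, passing to a further subsequence we may assume $\alpha_n^J-\alpha_n^j$ and the phase converge, so $R_{\alpha_n^j-\alpha_n^J}$ converges strongly to a bounded operator; writing $R_{-\alpha_n^J}w_n^{J-1}=e^{i\theta_n}R_{\alpha_n^j-\alpha_n^J}\big(R_{-\alpha_n^j}w_n^{J-1}\big)$ and using $R_{-\alpha_n^j}w_n^{J-1}\rightharpoonup0$ (fact (i) at level $J-1$) forces $v^J=0$, a contradiction. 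With (i) and (ii) in hand, I expand $\|u_n\|_{L^2}^2$ and check that all cross terms vanish: $\langle R_{\alpha_n^j}v^j,R_{\alpha_n^k}v^k\rangle=e^{i\theta_n}\langle v^j,R_{\alpha_n^k-\alpha_n^j}v^k\rangle\to0$ because translations to infinity converge weakly to $0$ (the relevant cross-correlation of two $L^2$ functions lying in $C_0$), while $\langle R_{\alpha_n^j}v^j,w_n^J\rangle=\langle v^j,R_{-\alpha_n^j}w_n^J\rangle\to0$ by (i). This gives $\|u_n\|_{L^2}^2=\sum_{j=1}^J\|v^j\|_{L^2}^2+\|w_n^J\|_{L^2}^2+o(1)$, and taking $\limsup_n$ yields the stated mass identity.

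It remains to close the induction by showing $\eta_J\to0$ as $J\to\infty$, which is the crux. From $\|v^{J+1}\|_{L^2}^2\ge \pi|v^{J+1}(0)|^2=\pi\,\eta_J^2$ and the mass identity (which bounds each partial sum $\sum_{j\le J}\|v^j\|_{L^2}^2$ by $1$), the series $\sum_j\|v^j\|_{L^2}^2$ converges, hence $\|v^{J+1}\|_{L^2}\to0$, and therefore $\eta_J\to0$. I expect the main obstacle to be precisely this quantitative step, namely converting the $L^\infty$-size of the remainder into an $L^2$-lower bound on the next profile, which is exactly where the hypercontractive inequality \eqref{hypercontractivity} is indispensable; a secondary, purely bookkeeping, difficulty is keeping the unimodular cocycle phases $e^{i\theta(a,b)}$ harmless throughout the weak-convergence manipulations.
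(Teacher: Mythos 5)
Your proof is correct and follows essentially the same concentration--compactness scheme as the paper: iterative extraction of profiles via the magnetic translations $R_\alpha$, separation of the cores by a weak-convergence contradiction, almost-orthogonality of the masses, and the Carlen inequality $\|v\|_{L^\infty}\le \pi^{-1/2}\|v\|_{L^2}$ to convert the $L^\infty$-size of the remainder into an $L^2$-lower bound forcing $\eta_J\to 0$. The only cosmetic difference is that you identify the weak limit through the reproducing-kernel property (weak continuity of point evaluations), where the paper extracts the profile via Montel's theorem applied to the entire parts; the two devices are interchangeable here.
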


\begin{proof}
If $\| u_n\|_{L^{\infty}(\C)} \longrightarrow 0$, we can take $J=0$ and $w_n=u_n$. If not, then there exists $\epsilon_1>0$ such that, up to  a subsequence, and for $n$ large enough $\eps_1\leq \| u_n\|_{L^{\infty}(\C)} \leq 2\epsilon_1$ and there exists $\alpha^1_n \in \C$ such that $|u_n(\alpha^1_n)| \geq \epsilon_1$. We define $v_n^1=R_{-\alpha^1_n}u_n \in \mathcal E$ which satisfies 
\begin{equation}\label{lowb}
|v^1_n(0)| \geq \epsilon_1.
\end{equation}
 Next we write $v^1_n(z)=f^1_n(z) \e^{-|z|^2/2}$, where $f^1_n$ is entire. By the Carlen inequality, for all $z\in \C$,
$$|f^1_n(z) \e^{-|z|^2/2}  | \leq \|v^1_n\|_{L^{\infty}(\C)}=\|u_n\|_{L^{\infty}(\C)} \leq \frac1{\sqrt{\pi}} \|u_n\|_{L^{2}(\C)} \leq \frac1{\sqrt{\pi}}.$$
Therefore, for all $K>0$ and $n\geq 1$, we get 
$$|f^1_n(z)| \leq C_K, \quad |z|\leq K.$$
By the Montel theorem, there exists an entire function $f$ such that, up to a subsequence  $f^1_n \longrightarrow f^1$, uniformly on any compact of $\C$, and we can set $v^1(z)=f^1(z)\e^{-|z|^2/2} \in \mathcal E $. Moreover \eqref{lowb} implies $\| v^1\|_{L^{\infty}(\C)} \geq \epsilon_1$. Next, up to a subsequence $v^1_n \rightharpoonup v^1$ in $L^2(\C)$. We define $w^1_n=R_{\alpha^1_n}(v^1_n-v^1)$, thus $u_n=R_{\alpha^1_n}v^1+w_n^1$, and 
\begin{eqnarray*}
\|u_n\|^2_{L^2(\C)}&=&\|R_{\alpha^1_n}v^1\|^2_{L^2(\C)}+\|w_n^1\|^2_{L^2(\C)}+2 \Re\<R_{\alpha^1_n}v^1,w^1_n\>_{L^2(\C) \times L^2(\C)}\\
&=&\|v^1\|^2_{L^2(\C)}+\|w_n^1\|^2_{L^2(\C)}+2 \Re\<v^1,v^1_n-v^1\>_{L^2(\C) \times L^2(\C)}\\
&=&\|v^1\|^2_{L^2(\C)}+\|w_n^1\|^2_{L^2(\C)}+\kappa^1_n,
\end{eqnarray*}
with $\kappa^1_n \to 0$ since  $v^1_n \rightharpoonup v^1$ in $L^2(\C)$.

Now we repeat the procedure for the sequence $(w^1_n)$. Either $\| w^1_n\|_{L^{\infty}(\C)} \longrightarrow 0$ or there exists $\eps_2>0$ such that  $\eps_2\leq \| w^1_n\|_{L^{\infty}(\C)} \leq 2\epsilon_2$. Then similarly,
$$u_n =  R_{\alpha^1_n}v^1+ R_{\alpha^2_n}v^2+w^2_n,$$
for some  $v^2  \in \mathcal E$ such that $\|v^2\|_{L^{\infty}} \geq \epsilon_2$, $\alpha^2_n \in \C$ and $w^2_n  \in \mathcal E$. Similarly we check the almost orthogonality condition 
\begin{equation*}
\|u_n\|^2_{L^2(\C)}=\|v^1\|^2_{L^2(\C)}+\|v^2\|^2_{L^2(\C)}+\|w_n^2\|^2_{L^2(\C)}+\kappa^2_n, \quad \kappa^2_n \longrightarrow 0.
\end{equation*}
Let us prove that $|\alpha^1_n-\alpha^2_n| \longrightarrow +\infty$. From the relation $w_n^1=R_{\alpha^2_n}v^2+w^2_n$ we deduce that 
$$R_{-\alpha^1_n}w_n^1=R_{\alpha^2_n-\alpha^1_n}v^2+R_{-\alpha^1_n}w^2_n.$$ If we had that, for a subsequence $\alpha^1_n-\alpha^2_n \longrightarrow \ell \in \C$, this would be in contradiction with the fact that $R_{-\alpha^1_n}w_n^1$, $R_{-\alpha^2_n}w_n^2\rightharpoonup 0$ in $L^2(\C)$ and $v^2 \neq 0$.

As long as the remainder term does not converge to 0 in $L^{\infty}$, we construct a sequence $(v^j) \in \mathcal E$ such that 
$$u_n =\sum_{j=1}^J R_{\alpha^j_n}v^j+w^J_n,$$
with $\|v^j\|_{L^{\infty}} \geq \epsilon_j$ and 
\begin{equation*}
\|u_n\|^2_{L^2(\C)}= \sum_{j=1}^{J}\|v^j\|^2_{L^2(\C)}+\kappa^J_n,
\end{equation*}
with $\kappa^J_n \longrightarrow 0$ when $n\longrightarrow +\infty$. Then from Carlen and the previous line
\begin{eqnarray*}
\|u_n\|^2_{L^2(\C)}&\geq &\pi \sum_{j=1}^{J}\|v^j\|^2_{L^{\infty}(\C)}+\kappa^J_n\\
&\geq &\pi \sum_{j=1}^{J} \epsilon^2_j+\kappa^J_n,
\end{eqnarray*}
which implies that $\epsilon_J \longrightarrow 0$ and therefore $\| w^J_n\|_{L^{\infty}(\C)} \longrightarrow 0$.
\end{proof}

Here is a classical consequence of this profile decomposition.
 
 \begin{cor} 
\label{coromin}
Let $(u_n)$ be sequence in $L^2_\mathcal E$ such that 
 $$\Vert u_n\Vert_{L^2(\C)}^2\to \pi =\left \Vert {\rm e}^{-\frac {\vert z\vert ^2}{2}}\right \Vert _{L^2(\C)}^2\ ,\ \Vert u_n\Vert_{L^4(\C)}^4\to \frac{\pi }{2}=\left \Vert {\rm e}^{-\frac {\vert z\vert ^2}{2}}\right \Vert _{L^4(\C)}^4\ .$$
 Then, up to extracting a subsequence,  there exists $\beta _n\in \C$ and $\theta \in \T $ such that
 $$\left \Vert R_{\beta _n}u_n-{\rm e}^{i\theta }{\rm e}^{-\frac {\vert z\vert ^2}{2}}\right \Vert _{L^2(\C)}\rightarrow 0\ .$$
 \end{cor}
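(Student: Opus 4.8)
The plan is to run a concentration--compactness argument built on the profile decomposition lemma proved just above, using the sharp Carlen inequality (Proposition~\ref{prop51}, equivalently \eqref{hypercontractivity} with $p=2$, $q=4$) as a rigidity tool. First I would normalize, setting $\tilde u_n = u_n/\|u_n\|_{L^2(\C)}$, so that $\|\tilde u_n\|_{L^2(\C)}=1$ and, since $\|u_n\|_{L^2(\C)}^2\to\pi$, we get $\|\tilde u_n\|_{L^4(\C)}^4 \to \frac{1}{2\pi}$; thus $(\tilde u_n)$ is an optimizing sequence for the Carlen inequality. Applying the profile decomposition lemma to $(\tilde u_n)$ yields profiles $v^j\in\mathcal E$, translation cores $\alpha_n^j$ with $|\alpha_n^j-\alpha_n^k|\to\infty$, and remainders $w_n^J$ with $\limsup_n\|w_n^J\|_{L^\infty(\C)}\to 0$. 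The goal is then to show that exactly one profile survives and that it is, modulo phase and a magnetic translation, the Gaussian.

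The central step, which I expect to be the main obstacle, is an $L^4$ decoupling:
$$
\lim_{J\to\infty}\limsup_{n\to\infty}\Big|\,\|\tilde u_n\|_{L^4(\C)}^4 - \sum_{j=1}^J\|v^j\|_{L^4(\C)}^4\,\Big|=0,
$$
which yields $\sum_{j\ge1}\|v^j\|_{L^4(\C)}^4 = \frac{1}{2\pi}$. Writing $S_n^J=\sum_{j=1}^J R_{\alpha_n^j}v^j$, the diagonal terms of $\int_\C|S_n^J|^4\,dL$ produce $\sum_j\|v^j\|_{L^4(\C)}^4$, because the modulating factor in \eqref{defR} has modulus one, so $R_\alpha$ is an isometry of every $L^p(\C)$. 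Every off-diagonal term carries at least two distinct indices $j\neq k$, and grouping by Cauchy--Schwarz bounds it by $\big(\int_\C|R_{\alpha_n^j}v^j|^2|R_{\alpha_n^k}v^k|^2\,dL\big)^{1/2}$ times a bounded factor; this tends to $0$ since $|R_{\alpha_n^j}v^j|$ and $|R_{\alpha_n^k}v^k|$ are $|v^j|$ and $|v^k|$ recentered at points whose separation $|\alpha_n^j-\alpha_n^k|$ diverges, while elements of $\mathcal E\cap L^\infty$ decay at infinity. The remainder is handled through $\|w_n^J\|_{L^4(\C)}^4\le\|w_n^J\|_{L^\infty(\C)}^2\|w_n^J\|_{L^2(\C)}^2$ and $\int_\C|S_n^J|^3|w_n^J|\,dL\le\|w_n^J\|_{L^\infty(\C)}\|S_n^J\|_{L^3(\C)}^3$, the last $L^3$ norm being bounded by $\|S_n^J\|_{L^2(\C)}$ via \eqref{hypercontractivity} since $S_n^J\in\mathcal E$; both vanish as $J\to\infty$ by the $L^\infty$ smallness of $w_n^J$ and the boundedness of the $L^2$ masses.

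Next comes the rigidity. Applying Proposition~\ref{prop51} profile by profile gives $\|v^j\|_{L^4(\C)}^4\le\frac{1}{2\pi}\|v^j\|_{L^2(\C)}^4$, so that, using $\sum_j\|v^j\|_{L^2(\C)}^2\le1$ from the lemma,
$$
\frac1{2\pi}=\sum_j\|v^j\|_{L^4(\C)}^4\le\frac1{2\pi}\sum_j\|v^j\|_{L^2(\C)}^4\le\frac1{2\pi}\Big(\sum_j\|v^j\|_{L^2(\C)}^2\Big)^2\le\frac1{2\pi}.
$$
Every inequality is therefore an equality: the last forces $\sum_j\|v^j\|_{L^2(\C)}^2=1$; the middle one forces all cross terms $\|v^j\|_{L^2(\C)}^2\|v^k\|_{L^2(\C)}^2$ to vanish, hence at most one nonzero profile $v^1$, of unit mass; and the equality case of Proposition~\ref{prop51} identifies it as $v^1(z)=\lambda\, e^{\alpha z-|z|^2/2}$. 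With a single surviving profile, the orthogonality identity at $J=1$ gives $\limsup_n\|w_n^1\|_{L^2(\C)}^2=0$, whence $R_{-\alpha_n^1}\tilde u_n\to v^1$ in $L^2(\C)$.

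Finally I would convert this into the stated form. A direct computation of $R_{\gamma}\big(e^{-|z|^2/2}\big)$ shows that, with $\gamma_0=-\overline\alpha$ and the unit--mass constraint fixing $|\lambda|$, one has $v^1=\frac{1}{\sqrt\pi}e^{i\theta_0}R_{\gamma_0}\big(e^{-|z|^2/2}\big)$ for some $\theta_0\in\T$. Applying the isometry $R_{-\gamma_0}$ and the magnetic--translation relation $R_{-\gamma_0}R_{-\alpha_n^1}=e^{i\psi_n}R_{\beta_n}$ (with $\beta_n=-\gamma_0-\alpha_n^1$ and $\psi_n\in\R$) gives $e^{i\psi_n}R_{\beta_n}\tilde u_n\to\frac{1}{\sqrt\pi}e^{i\theta_0}e^{-|z|^2/2}$ in $L^2(\C)$. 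Extracting a subsequence along which the unimodular factor $e^{i\psi_n}$ converges, and recalling that $\|u_n\|_{L^2(\C)}\to\sqrt\pi$, I obtain $R_{\beta_n}u_n\to e^{i\theta}e^{-|z|^2/2}$ in $L^2(\C)$ for some $\theta\in\T$, which is the claim.
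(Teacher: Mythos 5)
Your proof is correct and follows essentially the same route as the paper: apply the profile decomposition, establish the $L^4$ decoupling, invoke the Carlen inequality profile by profile, and use the equality chain to force a single Gaussian profile identified via the equality case of Proposition~\ref{prop51}. You supply more detail than the paper on the $L^4$ decoupling and on the phase bookkeeping when composing magnetic translations, but the argument is the same.
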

\begin{proof}
Up to extracting a subsequence, we apply the profile decomposition
$$u_n =\sum_{j=1}^J R_{\alpha^j_n}v^j+w^J_n,$$
with $$\limsup_{n\longrightarrow +\infty}\| w^J_n\|_{L^{\infty}(\C)} \longrightarrow 0, \qquad J\longrightarrow +\infty,$$
and
$$\sum_{j=1}^J \|v^j\|^2_{L^2}+ \limsup_{n\longrightarrow +\infty}\| w^J_n\|^2_{L^{2}} = \pi \ .$$
From H\"older's inequality, we infer
$$\limsup_{n\longrightarrow +\infty}\| w^J_n\|_{L^4(\C)} \longrightarrow 0, \qquad J\longrightarrow +\infty,$$
and, using 
$$|\alpha^j_n-\alpha^k_n| \longrightarrow +\infty, \qquad n\longrightarrow +\infty,\quad j\neq k,$$
we have 
$$\frac{\pi}{2}=\lim_{n\to \infty} \Vert u_n\Vert_{L^4}^4=\sum_{j=1}^{+\infty} \Vert v^j\Vert _{L^4}^4\ .$$
Now apply the Carlen inequality to each profile
$$\Vert v^j\Vert _{L^4}^4\leq \frac{1}{2\pi}\|v^j\|^4_{L^2}\ .$$
We obtain
\begin{eqnarray*}
\frac{\pi}{2}&=&\sum_{j=1}^{+\infty} \Vert v^j\Vert _{L^4}^4\leq \frac{1}{2\pi}\sum_{j=1}^{+\infty}  \|v^j\|^4_{L^2}\\
&\leq & \frac{1}{2\pi}\left (\sum_{j=1}^{+\infty}  \|v^j\|^2_{L^2}\right )^2\leq \frac{\pi}{2}\ .
\end{eqnarray*}
This implies that all the inequalities above are equalities, in particular there is only one $j$ --- say $j=1$--- such that $v^j\not =0$, and $w_n^J=w_n\to 0$ in $L^2$. In particular, $v^1$ is a minimizer of the $L^4-L^2$ Carlen inequality with mass $\pi $, so there exists $\alpha \in \C$ such that
$$v^1={\rm e}^{i\theta} R_{\alpha}\left ({\rm e}^{-\frac {\vert z\vert ^2}{2}}\right )\ ,$$
which yields the result by setting $\beta _n=-\alpha _n^1-\alpha $. 
\end{proof}

\subsection{Minimizers of $G_\mu = 8 \pi \mathcal{H} + \mu P$ for $M$ fixed} 

\begin{prop}[Local minimizers]
\label{proplocalmin}
\begin{itemize}
Consider for $\mu>0$ the minimization problem
$$
\min_{\substack{u \in \mathcal{E} \\ M(u) =1}} G_\mu \qquad \mbox{with} \qquad G_\mu =  8\pi \mathcal{H} + \mu P.
$$
\item[(i)] The function $\phi_0$ is a strict local minimizer (modulo the rotation of phase symmetry) if and only if  $\mu>\frac{1}{2}$.
\item[(ii)] The function $\phi_1$ is a strict local minimizer (modulo the rotation of phase symmetry) if and only if  $\frac{5}{32} < \mu < \frac{1}{2}$.
\item[(iii)]  If  $0<\mu <\frac{5}{32}$, then any local minimizer has an infinite number of zeros.
\item[(iv)] The function $\phi_k$, with $k \geq 2$ is not a local minimizer for any value of $\mu >0$.
\item[(v)] The function $\psi_b$, with $b>0$ is not a local minimizer for any value of $\mu\neq 1/2$.
\end{itemize}
\end{prop}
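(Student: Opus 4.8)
The plan is to read off local minimality from the \emph{constrained second variation} of $G_\mu$ at each candidate critical point, working in the coordinates $u=\sum_k c_k\varphi_k$. Since any local minimizer is a critical point, it solves $\nu u-\mu\Lambda u=4\pi\Pi(|u|^2u)$ for a Lagrange multiplier $\nu$; evaluating at $u=\phi_N$ (that is $c_k=\delta_{kN}$) forces $\nu=\frac{2}{2^{2N}}\binom{2N}{N}+\mu N$. Perturbing $c_k=\delta_{kN}+d_k$ and using $8\pi\mathcal H(u)=\sum_\ell 2^{-\ell}\big|\sum_{p+q=\ell}c_pc_q\binom{\ell}{p}^{1/2}\big|^2$ together with $P(u)=\sum_k k|c_k|^2$, I compute $Q(d)=\delta^2(G_\mu-\nu M)$. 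Its diagonal coefficients are $\alpha_m=\frac{4}{2^N}\frac{\binom{m+N}{N}}{2^m}+\mu(m-N)-\beta$, where $\beta=\frac{2}{2^{2N}}\binom{2N}{N}$, and the quartic produces a real-part coupling of $d_m$ with $d_{2N-m}$. Hence $Q$ decouples into $2\times2$ blocks on the pairs $\{N-j,N+j\}$, with off-diagonal entry $\tfrac12\gamma_j$, $\gamma_j=\frac{4}{2^{2N}}\sqrt{\binom{2N}{N}\binom{2N}{N-j}}$, plus pure diagonal terms $\alpha_m|d_m|^2$ for $m>2N$. Discarding the phase direction $d_N$, strict local minimality is equivalent to positive-definiteness of $Q$.

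For (i), $N=0$, there is no coupling, so $Q=\sum_{m\ge1}\alpha_m|d_m|^2$ with $\alpha_m=4\cdot2^{-m}+\mu m-2$; here $\alpha_1=\mu>0$ always, and for $m\ge2$ the minimum is at $m=2$, where $\alpha_2=2\mu-1$, giving $Q>0$ iff $\mu>\tfrac12$. For (ii), $N=1$, the only coupling is the block on $\{0,2\}$, whose determinant I compute to be positive iff $\mu<\tfrac12$ (trace always positive), while the uncoupled modes $m\ge3$ require $\alpha_m>0$; the binding constraints turn out to be $\alpha_5=4\mu-\tfrac58$ and $\alpha_6=5\mu-\tfrac{25}{32}$, both vanishing exactly at $\mu=\tfrac{5}{32}$. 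Combining, $\phi_1$ is a strict local minimizer iff $\tfrac5{32}<\mu<\tfrac12$. The boundary cases $\mu=\tfrac12$ (and $\mu=\tfrac5{32}$) are \emph{not} strict, since $Q$ then vanishes in a nontrivial direction.

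For (v), $\psi_b$ is an $MP$-stationary wave with angular speed $-\tfrac1{8\pi}$, so inserting its equation into the $G_\mu$-critical relation yields $(\nu-4\pi\lambda_{\mathrm{sw}})\psi_b=(\mu-\tfrac12)\Lambda\psi_b$; as $\Lambda\psi_b$ is not proportional to $\psi_b$ (it carries nonzero $\varphi_0$ and $\varphi_1$ components), no $\nu$ works unless $\mu=\tfrac12$, so $\psi_b$ is not even a critical point for $\mu\ne\tfrac12$, hence not a local minimizer. The same eigenfunction remark gives the reduction for (iii): by the classification of Theorem~\ref{thm41}, a finite-zero critical point of $G_\mu$ is either some $\psi_b$ (only for $\mu=\tfrac12$) or an $M$-stationary wave $\phi_n^\alpha$; but a $G_\mu$-critical point must be a $\Lambda$-eigenfunction, which forces $\alpha=0$, i.e. $u=\phi_n$. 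Thus for $0<\mu<\tfrac5{32}$ every finite-zero critical point is some $\phi_n$, so once (iv) is established, (i)/(ii)/(iv) show none is a local minimizer and (iii) follows.

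The crux is (iv): proving $\phi_N$, $N\ge2$, is \emph{never} a local minimizer, i.e. $Q$ is indefinite for every $\mu>0$. A test direction supported on a single mode $m\ne N$ detects the sign of $\alpha_m$, and one supported on a pair $\{N-j,N+j\}$ detects the sign of $\det_j(\mu)=\alpha_{N-j}\alpha_{N+j}-\tfrac14\gamma_j^2$, a downward parabola in $\mu$. The clean input is the $j=1$ block, whose determinant simplifies to $\mu\big(\tfrac{\beta}{N+1}-\mu\big)$ and is therefore negative for all $\mu>\tfrac{\beta}{N+1}$. The remaining small-$\mu$ range must be covered by choosing another pair with $\det_j$ having a strictly negative maximum, which I expect to verify by arranging $\alpha_{N-j}|_{\mu=0}\,\alpha_{N+j}|_{\mu=0}<\tfrac14\gamma_j^2$ and controlling the linear term in $\mu$. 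The main obstacle is that the effective $j$ depends on $N$ in a non-monotone way (for instance $j=N$ works for $N=2,3$ but fails for large $N$, where the exponential smallness of $\binom{3N}{N}2^{-3N}$ destroys negativity and a small $j$ is needed instead); the genuine difficulty is thus a \emph{uniform-in-$N$} estimate on the interaction coefficients $\binom{2N}{N-j}$ guaranteeing that, for each $N\ge2$, some pair has strictly negative block determinant throughout $\mu>0$, with the degenerate value $\mu=\tfrac{\beta}{N+1}$ caught by an adjacent pair or a high diagonal mode.
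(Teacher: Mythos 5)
Your computations for (i) and (ii) follow the paper's own route (second variation in the $(c_k)$ coordinates, the $\{0,2\}$ block giving the threshold $\mu<\tfrac12$ and the diagonal modes $n\ge 3$ giving $\mu>\tfrac5{32}$), and your numbers check out against the paper's. Your argument for (v) is genuinely different and arguably cleaner: you show $\psi_b$ cannot even satisfy the Euler--Lagrange equation $\nu u-\mu\Lambda u=4\pi\Pi(|u|^2u)$ unless $\mu=\tfrac12$, because $\Lambda\psi_b$ is not proportional to $\psi_b$; the paper instead observes that $G_\mu(\psi_b)=1+(\mu-\tfrac12)(1+b^2)^{-2}$ decreases under a variation of $b$. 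Your reduction of (iii) to (iv) via the classification of Theorem~\ref{thm41} is also the intended one.

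The genuine gap is (iv), which you yourself flag as unresolved: for $N\ge 2$ and small $\mu$ you do not exhibit a negative direction, and you leave a ``uniform-in-$N$'' estimate on block determinants as an open difficulty. That difficulty is an artifact of insisting that some $2\times 2$ block have negative determinant. Two observations close the argument. First, a nonnegative quadratic form must have nonnegative diagonal entries, so a single negative $\alpha_m$ suffices; and your own formula gives
$\alpha_0=\tfrac{4}{2^N}-\tfrac{2}{4^N}\binom{2N}{N}-\mu N$, which is $\le-\mu N<0$ for \emph{every} $\mu>0$ as soon as $\binom{2N}{N}\ge 2^{N+1}$, i.e.\ for all $N\ge 3$. (This is exactly the paper's test function $\sqrt{1-\epsilon^2}\,\phi_N+\epsilon\,\phi_0$: both the Hamiltonian and the momentum contributions to the second variation are negative, so there is no small-$\mu$ regime to worry about. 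Your downward-parabola concern only arises for blocks whose two modes sit on opposite sides of $N$, where the $\mu$-terms have opposite signs.) Second, in the one remaining case $N=2$, where $\alpha_0=\tfrac14-2\mu$ changes sign, take the antisymmetric direction $(d_0,d_4)=(\epsilon,-\epsilon)$ in your $j=2$ block: it preserves $P$ exactly, so $\mu$ drops out, and the paper computes $G_\mu=\tfrac38+2\mu-\tfrac{4\sqrt6-7}{16}\epsilon^2+\mathcal O(\epsilon^4)$ with $4\sqrt 6>7$; equivalently, in your normalization $\det_2(\mu)=-4\mu^2+\tfrac{\mu}{8}-\tfrac{3}{64}$ has negative discriminant and is negative for all $\mu$. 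Until (iv) is completed, (iii) is also incomplete, since it rests on ruling out every $\phi_N$.
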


\begin{proof} $(i)$ Consider a deformation of $\phi_0$ at constant mass $M=1$ in $(c_k)$ coordinates: it is a function $s \mapsto (c_k(s))$ such that $c_k(0) = \delta_{k,0}$ and $\sum_{k=0}^{\infty} |c_k(s)|^2 =1$. Denoting with $\dot{}$ differentiation with respect to $s$, this last condition implies in particular that 
\begin{equation}
\label{albatros}
\mathfrak{Re} \dot{c_0}(0) = 0 \quad \mbox{and} \quad \mathfrak{Re} \ddot{c_0}(0) = - \sum_{k=0}^{+\infty} |\dot c_k(0)|^2.
\end{equation}
By using the phase rotation we can assume that $\mathfrak{Im} \dot{c_0} (0)= 0$, which gives $ \dot{c_0} (0) = 0$.
An immediate computation shows that (everything being evaluated at $s=0$)
$$
\left( \frac{d}{ds} \right)^2 G_\mu = 8 |\dot c_0|^2 + 4 \mathfrak{Re} \dot{c_0}^2 + 4 \mathfrak{Re} \ddot{c_0} + \sum_{n \geq 1} \left[ \frac{8}{2^n} + 2\mu n \right] |\dot c_n|^2.
$$
Making use of~\eqref{albatros}, this reduces to
$$
\left( \frac{d}{ds} \right)^2 G_\mu = \sum_{n \geq 1} \left[ \frac{8}{2^n} + 2\mu n - 4\right] |\dot{c}_n|^2.
$$
Therefore $\phi_0$ is a strict local minimizer iff $\frac{8}{2^n} + 2\mu n - 4 > 0$ for any $n \in \mathbb{N}$; but this is equivalent to $\mu > \frac{1}{2}$.

\bigskip 
\noindent
$(ii)$ Consider now a deformation of $\phi_1$ at constant mass $M=1$ in $(c_k)$ coordinates: 
$s \mapsto (c_k(s))$ such that $c_k(0) = \delta_{k,1}$ and $\sum_{k=0}^{\infty} |c_k(s)|^2 =1$. This implies in particular that 
\begin{equation}
\label{albatros2}
\mathfrak{Re} \dot{c}_1(0) = 0 \quad \mbox{and} \quad \mathfrak{Re} \ddot{c}_1(0) = - \sum_{k=0}^{+\infty} |\dot{c}_k(0)|^2.
\end{equation}
By using the phase rotation we can assume that $\mathfrak{Im} \dot{c_1} (0)= 0$, which gives $ \dot{c_1} (0) = 0$.
To simplify computations, introduce the following notation
$$
8 \pi \mathcal{H} = \sum_{\ell=0}^{\infty} \frac{1}{2^\ell} |S_\ell|^2, \;\; \mbox{with} \;\; S_\ell = \sum_{p+q = \ell} \sqrt{\frac{(p+q)!}{p! q!}} c_p c_q.
$$
Notice that, evaluated at $s=0$,
\begin{align*}
& S_\ell = 0 \;\; \mbox{and} \;\; \dot{S}_\ell = 2 \sqrt{\ell} \dot{c}_{\ell-1} \;\; \mbox{for $\ell \neq 2$} \\
& S_2 = \sqrt 2, \;\;\dot{S}_2 = 2 \sqrt{2} \dot{c}_1, \;\; \mbox{and} \;\; \ddot{S}_2 = 2 \sqrt{2} \dot{c}_1^2 + 2 \sqrt{2} \ddot{c}_1 + 4 \dot{c}_0 \dot{c}_2.
\end{align*}
Therefore,
\begin{align*}
&\left( \frac{d}{ds} \right)^2 G_\mu  = \sum_{n \neq 2} \frac{2}{2^n} |\dot{S}_n|^2 + \frac{1}{4} \left[ 2 |\dot{S}_2|^2 + 2 \mathfrak{Re} S_2 \ddot{S}_2 \right] + 2\mu \sum_{n \geq 1} n |\dot{c_n}|^2 + 2\mu \mathfrak{Re} \ddot{c_1} \\
& = \sum_{n \neq 2} \frac{n}{2^{n-3}} |\dot{c}_{n-1}|^2 + \frac{1}{4}\left[2 |2\sqrt{2} \dot{c_1}|^2 + 2\sqrt{2} \mathfrak{Re} (2\sqrt{2} \dot{c_1}^2 + 2\sqrt{2} \ddot{c_1} + 4 \dot{c_0} \dot{c}_2) \right] + 2\mu \sum_{n \geq 1} n |\dot{c}_n|^2 + 2\mu \mathfrak{Re} \ddot{c}_1.
\end{align*}
Making use of~\eqref{albatros2}, this reduces to
$$
\dots = (2-2\mu)|\dot{c}_0|^2  + (1+2\mu) |\dot{c}_2|^2 + 2 \sqrt{2} \mathfrak{Re} (\dot{c_0} \dot{c}_2) + \sum_{n \geq 3} |\dot{c}_n|^2 \left( \frac{n+1}{2^{n-2}} - 2 + 2\mu (n-1) \right).
$$
This (infinite dimensional) quadratic form in the $(\dot{c}_k)$ is positive if and only if
\begin{itemize}
\item The quadratic form $(x,y) \mapsto (2-2\mu)|x|^2  + (1+2\mu) |y|^2 + 2 \sqrt{2} \mathfrak{Re} (xy)$ is positive. This is the case if $0 < \mu < \frac{1}{2}$.
\item For any $n \geq 3$, $\frac{n+1}{2^{n-2}} - 2 + 2\mu (n-1) > 0$. This is the case for $\mu > \frac{5}{32}$.
\end{itemize}
This gives the desired result.

\bigskip
\noindent
$(iii)$ This will be a direct implication of $(iv)$ and $(v)$, combined with Theorem~\ref{thm41}.
\bigskip

\noindent
$(iv)$ We first show that $\phi_2$ cannot be a local minimizer. For $c_0,c_2,c_4 \in \C$ we compute
\begin{align*}
G_\mu (c_0 \phi_0 + c_2 \phi_2+c_4\phi_4) =&\mu(2|c_2|^2+4|c_4|^2)+|c_0|^4+ |c_0|^2|c_2|^2+ \frac38 |c_2|^4 + \frac14 |c_0|^2|c_4|^2\\
& \qquad\qquad+\frac{\sqrt{6}}4 \mathfrak{Re}( \ov{c_2}^2 c_0c_4)+\frac{15}{16} |c_2|^2|c_4|^2
+\frac{ 35}{128} |c_4|^4.
\end{align*}
Now, let $0<\epsilon<1/2$ and set $c_0=\epsilon$, $c_4=-\epsilon$ and $c_2=\sqrt{1-2\epsilon^2}$, then 
\begin{eqnarray*}
G_\mu (\epsilon \phi_0 + \sqrt{1-2\epsilon^2} \phi_2-\epsilon \phi_4) = \frac38 + 2\mu-\frac{4\sqrt{6}-7}{16}\epsilon^2 +\mathcal{O}(\epsilon^4) < G_\mu (  \phi_2 ),
\end{eqnarray*}
for $\epsilon>0$ small enough, which proves the result.

To show that $\phi_n$ cannot be a local minimizer for $n \geq 3$, observe that, if $0<\epsilon<1$,
$$
G_\mu (\sqrt{1-\epsilon^2} \phi_n + \epsilon \phi_0)
= \frac{(2n)!}{2^{2n}(n!)^2} + \mu n + \epsilon^2 \left[ \frac{1}{2^{n-2}} - \frac{(2n)!}{2^{2n-1}(n!)^2} - \mu n \right].
$$
Since $\frac{1}{2^{n-2}} - \frac{(2n)!}{2^{2n-1}(n!)^2} < 0$ for $n\geq 3$, $\phi_n$ cannot be a local minimizer. 
\medskip

\noindent
$(v)$  A direct computation shows that 
$$ G_\mu(\psi_b) = 1 + \left( \mu - \frac{1}{2} \right) \frac{1}{(1+b^2)^2}.$$
Then for $\mu \neq 1/2$,  a variation of $b$ may decrease this quantity, excepted in the case $b=0$, but then $\psi_0=\phi_1$ which is treated in point $(ii)$.
\end{proof}

Turning to the global minimization problem, observe that
\begin{align*}
&G_\mu(\phi_0) = 1 \\
&  G_\mu(\phi_1) = \frac{1}{2} + \mu \\
&  G_\mu(\psi_b) = 1 + \left( \mu - \frac{1}{2} \right) \frac{1}{(1+b^2)^2}.
\end{align*}
This implies in particular that $G_\mu(\phi_0) =  G_\mu(\phi_1) = G_\mu(\psi_b) = 1$ if $\mu = \frac{1}{2}$.

\begin{prop}[Global minimizers]\label{propglob} \begin{itemize}
\item[(i)] For any $\mu>0$, there exists a global minimizer of $G_\mu$ over $\{ u \in \mathcal{E}, M(u) = 1 \}$.
\item[(ii)] For $\mu\geq \sqrt 3-1$, $\phi_0$ is the unique global minimizer of $G_\mu$ over $\{ u \in \mathcal{E}, M(u) = 1 \}$.
\item[(iii)] For $\mu \in (0,\frac{5}{32})$, the global minimizer of $G_\mu$ has an infinity of zeros.
\end{itemize}
\end{prop}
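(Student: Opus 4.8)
The three assertions are of unequal depth, so I would dispatch (iii) as an immediate consequence of the preceding results, establish (i) by the direct method, and concentrate the real work on the sharp inequality underlying (ii).

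\medskip\noindent\textbf{Part (i).} Take a minimizing sequence $(u_n)$ with $M(u_n)=1$ and $G_\mu(u_n)\to m:=\inf G_\mu\ge 0$. Since $\mu>0$ and $8\pi\mathcal H\ge 0$, the bound $\mu P(u_n)\le G_\mu(u_n)$ gives $\sup_n P(u_n)<\infty$, so $\int_\C|z|^2|u_n|^2\,dL=P(u_n)+M(u_n)$ is bounded; writing $u_n=\sum_k c_k^{(n)}\varphi_k$, this means $\sum_k k\,|c_k^{(n)}|^2$ is bounded. The tail estimate $\sum_{k>N}|c_k^{(n)}|^2\le N^{-1}\sum_k k\,|c_k^{(n)}|^2$ yields equi-smallness of tails, so up to a subsequence $u_n\to u$ strongly in $L^2_\mathcal E$ with $M(u)=1$. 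By the Carlen inequality \eqref{hypercontractivity}, strong $L^2_\mathcal E$-convergence implies $L^4$-convergence (because $u_n-u\in\mathcal E$ and $\|u_n-u\|_{L^4}\lesssim\|u_n-u\|_{L^2}$), hence $\mathcal H(u_n)\to\mathcal H(u)$; meanwhile $P(u)\le\liminf_n P(u_n)$ by Fatou. Therefore $G_\mu(u)\le m$, and since $u$ is admissible, $u$ is a global minimizer.

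\medskip\noindent\textbf{Part (ii).} The plan is to prove, for every $u\in\mathcal E$ and every $\mu\ge\sqrt3-1$, the pointwise inequality $G_\mu(u)\ge M(u)^2$, with equality only when $u=e^{i\gamma}\phi_0$; evaluated at $M(u)=1$ this gives $G_\mu(u)\ge 1=G_\mu(\phi_0)$ together with uniqueness. Using the formula $8\pi\mathcal H(u)=\sum_{\ell\ge0}2^{-\ell}|S_\ell|^2$ with $S_\ell=\sum_{n+p=\ell}\sqrt{\binom{\ell}{n}}\,c_nc_p$ and the Cauchy--Schwarz step from the proof of Proposition~\ref{prop51}, one obtains
$$
M(u)^2-8\pi\mathcal H(u)=\sum_{\ell\ge0}D_\ell,\qquad D_\ell=\sum_{n+p=\ell}|c_n|^2|c_p|^2-2^{-\ell}|S_\ell|^2\ge0,
$$
with $D_0=D_1=0$. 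The remaining task is to absorb $\sum_\ell D_\ell$ into $\mu P(u)$. Here I would exploit the identity $\sum_{\ell}\ell\sum_{n+p=\ell}|c_n|^2|c_p|^2=2M(u)P(u)$: the high-$\ell$ defects are controlled by the angular-momentum weight, whereas the low modes (the $\ell=2$ coupling of $c_0,c_1,c_2$) must be retained exactly, and the condition that the resulting low-dimensional quadratic form be nonnegative is precisely what produces the threshold $\mu\ge\sqrt3-1$. The equality case forces all $D_\ell=0$ and $P(u)=0$, whence $c_k=0$ for $k\ge1$ and $u=e^{i\gamma}\phi_0$. This inequality is the heart of the matter and the step I expect to be the main obstacle; I also expect the constant $\sqrt3-1$ to be non-optimal, since the explicit competitors $\phi_1$ and $\psi_b$ already realize the ratio $\tfrac12$ in $\big(M^2-8\pi\mathcal H\big)/P$.

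\medskip\noindent\textbf{Part (iii).} This is immediate from the previous parts and the classification. By (i) a global minimizer $u$ exists, and it is \emph{a fortiori} a local minimizer of $G_\mu$ at fixed mass. Proposition~\ref{proplocalmin}(iii) states that for $0<\mu<\tfrac5{32}$ every local minimizer has infinitely many zeros: indeed, if $u$ had only finitely many zeros, Theorem~\ref{thm41} would force it, modulo phase and space rotation, to be one of $\phi_n^\alpha$ or $\psi_b$, and Proposition~\ref{proplocalmin}(i),(ii),(iv),(v) shows that in the range $0<\mu<\tfrac5{32}$ none of these is a local minimizer ($\phi_0$ requires $\mu>\tfrac12$, $\phi_1$ requires $\mu>\tfrac5{32}$, while $\phi_k$ for $k\ge2$ and $\psi_b$ for $b>0$ are never local minimizers). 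Applying this to the global minimizer $u$ yields statement (iii).
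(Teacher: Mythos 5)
Your parts (i) and (iii) are correct and follow essentially the paper's route: (i) is the direct method (the paper phrases compactness via local boundedness from \eqref{hypercontractivity} plus the $O(R^{-2})$ mass tail, you phrase it via equi-small tails of the $(c_k)$; both work), and (iii) is exactly the paper's deduction from Proposition~\ref{proplocalmin} and Theorem~\ref{thm41}.

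Part (ii), however, is not a proof but a plan, and the missing step is precisely the heart of the statement — as you yourself acknowledge. Reducing by homogeneity to $F_\mu(u):=8\pi\mathcal H(u)+M(u)(\mu P(u)-M(u))\ge 0$ and writing $M^2-8\pi\mathcal H=\sum_\ell D_\ell$ with $D_\ell\ge0$ is the right start (and matches the paper's \eqref{minFmu}--\eqref{minRmu}), but the assertion that ``the high-$\ell$ defects are controlled by the angular-momentum weight'' while ``the low modes must be retained exactly'' is not an argument: you never exhibit the split of $\mu M P$ between the two tasks, nor the low-dimensional quadratic form whose positivity is supposed to yield the threshold. The paper does this concretely: it discards the terms $\ell\ge3$ of the Hamiltonian sum, checks that the resulting remainder $R_\mu$ in \eqref{minRmu} is nonnegative once $\mu\ge\frac23$, then completes a square in the $\ell=2$ block to get $F_\mu\ge(\mu-\frac12)\lvert c_1^2+\sqrt2\,c_0c_2\rvert^2+\mu|c_0|^2|c_1|^2+(3\mu-2)|c_1|^2|c_2|^2+2\sqrt2(1-\mu)\Re(\overline{c_0}c_1^2\overline{c_2})$, and the positivity of the remaining real quadratic form in $\overline{c_0}c_1$ and $c_1\overline{c_2}$ is the discriminant condition $4\mu(3\mu-2)\ge8(1-\mu)^2$, i.e. $\mu^2+2\mu-2\ge0$, i.e. $\mu\ge\sqrt3-1$. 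Without carrying out such a computation you have neither the inequality nor the constant. The equality discussion suffers from the same gap: you claim equality forces $P(u)=0$, but that only follows once the explicit nonnegative decomposition is in hand (in the paper: $R_\mu=0$ gives $c_k=0$ for $k\ge2$, then $c_1^2+\sqrt2\,c_0c_2=0$ gives $c_1=0$). Your closing remark that $\sqrt3-1$ should not be optimal (the natural threshold being $\frac12$) is reasonable and consistent with the paper, which claims nothing below $\sqrt3-1$.
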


\begin{proof} $(i)$ Consider a minimizing sequence $(u_n)$ in $\{ u \in \mathcal{E}, M(u) = 1 \}$ of $G_\mu$. Then $P(u_n)$ and $M(u_n)$ are uniformly bounded. On the one hand, by~\eqref{hypercontractivity}, $u_n$ is uniformly bounded in $B(0,R)$ for any $R$, and, by Cauchy's integral formula, so are all its derivatives; on the other hand, the $L^2$ mass  of $u_n$ on $B(0,R)^\complement$ is $\lesssim \frac{1}{R^2}$. Therefore, $(u_n)$ is precompact in $L^2$, and a subsequence converges to $u \in \mathcal{E}$ such that $M(u)=1$. By lower semi-continuity of $G_\mu$, we obtain that $u$ is a minimizer.
\bigskip

\noindent $(ii)$ By an homogeneity argument, the estimate $G_\mu \geq G_\mu (\phi_0)=1$ for $M(u)=1$ is equivalent to the following estimate for every $u$,
$$F_\mu (u)\geq 0\ ,\ F_\mu(u):=8\pi \mathcal H(u)+M(u)(\mu P(u)-M(u))\ .$$
The expression of $F_\mu (u)$ in variables $c_k$ reads
$$F_\mu =\sum_{\ell=0}^{+\infty} \frac{1}{2^\ell} \left |\sum_{p+q = \ell} \sqrt{\frac{(p+q)!}{p! q!}} c_p c_q \right |^2+\left (\sum_{k=0}^{+\infty} |c_k|^2\right )\left (\sum_{j=0}^{+\infty} (\mu j-1)|c_j|^2\right )\ .$$
Discarding the terms $\ell \ge 3$ in the first sum, and developing the others, we have
\begin{equation}\label{minFmu}
F_\mu \geq \mu |c_0|^2|c_1|^2+(2\mu -1)|c_0|^2|c_2|^2+\left (\mu -\frac 12\right )|c_1|^4 +(3\mu -2)|c_1|^2|c_2|^2+\sqrt 2\, {\Re}(\overline c_0c_1^2\overline c_2)+R_\mu \ ,
\end{equation}
where 
\begin{equation}\label{minRmu}
R_\mu :=(2\mu -1)|c_2|^4+\sum_{k=3}^{+\infty} (\mu k-1)|c_k|^4+\sum_{k=3}^{+\infty} |c_k|^2((\mu k-2)|c_0|^2+(\mu (k+1)-2)|c_1|^2+(\mu (k+2)-2)|c_2|^2)\ .
\end{equation}
Notice that $R_\mu \geq 0$ if $\mu \geq \frac 23$. Coming back to \eqref{minFmu}, we therefore observe that, for $\mu \geq \frac 23$,
\begin{eqnarray*}
F_\mu &\geq &\left (\mu -\frac 12\right )| c_1^2+\sqrt 2 c_0c_2|^2+\mu |c_0|^2|c_1|^2+(3\mu -2)|c_1|^2|c_2|^2+2\sqrt 2\,(1-\mu){\Re}(\overline c_0c_1^2\overline c_2)\\
&\geq & 0 ,
\end{eqnarray*}
if the remaining real quadratic form in $\overline c_0c_1, c_1\overline c_2$ is positive, which holds as soon as $$4\mu (3\mu -2)\geq 8(1-\mu )^2\ ,$$ namely $\mu ^2+2\mu -2\geq 0$, or $\mu \geq \sqrt 3-1$. Since $\sqrt 3-1\geq \frac 23$, this completes the proof of the inequality. If the equality holds for such $\mu $, then $R_\mu =0$, which means $c_k=0$ for $k\geq 2$, and $c_1^2+\sqrt 2 c_0c_2=0$, so $c_1=0$. Hence $u$ must be proportional to $\phi_0$.

\bigskip

\noindent $(iii)$ is an immediate consequence of Proposition \ref{proplocalmin}. 
\end{proof}

\begin{rem}
If one is interested about minimizing $G_\mu$ among even functions in $\mathcal{E}$, the situation is simpler:
\begin{itemize}
\item If $\mu > \frac12$, $\phi_0$ is the unique global minimizer.\vspace{3pt}
\item If $\mu < \frac12$, the global minimizer has an infinity of zeros.
\end{itemize}
The first claim follows from \eqref{minFmu} and \eqref{minRmu} by setting $c_{2n+1}=0$ for all $n\geq 0$. We turn to the second claim. Let  $\mu<1/2$, then by Theorem~\ref{thm41}, the only possible minimizers with a finite number of zeros are the $\phi_{2n}$, with $n\geq 1$. But the proof of  Proposition \ref{proplocalmin} ($iv$) shows that none of them is a local minimizer, among even functions.
\end{rem}
\bigskip

\begin{proof}[Proof of Theorem~\ref{thmmini}] The result follows from Proposition~\eqref{propglob} and a simple rescaling argument, setting $u(z)=\sqrt{h}v(\sqrt{h}z)$. \medskip

As in \cite{ABN}, denote by $\lambda$ a Lagrange multiplier associated to the problem \eqref{nrj} and denote by $e^h_{LLL}$ the global minimum of $E^{h}_{LLL}$. Then by~{\cite[Estimate (1.10)]{ABN}}, 
$$ \frac{2 \Omega_h}{3} \sqrt{\frac{2Na}{\pi}} <e^h_{LLL} \leq \lambda.$$
Therefore the  condition in \cite[Theorem 1.2]{ABN} is stronger than   the condition \eqref{56}.
\end{proof}

\subsection{Minimizers of $P$ for $\mathcal{H}$ and $M$ fixed}

Recall that for $u\in {\mathcal{E}}$
$$P(u) = \int_{\mathbb{C}} \Lambda  u(z) \overline{u(z)}\,dL(z)= \int_{\mathbb{C}} (\vert z\vert^2-1)\vert u(z)\vert^2 \,dL(z).$$ 
Given $M_0,H_0>0$, we study 
\begin{equation}\label{min}
\min_{\substack{\mathcal{H}(u)=H_0\\M(u)=M_0}}P(u).
\end{equation}
Recall that, by Proposition \ref{prop51}, for all $u \in \mathcal{E}$, $u \neq 0$, one has $8\pi \frac{\mathcal{H}(u)}{  M(u)^2} \leq 1$.
\begin{prop}\label{PropminP}
Fix  $M_0,H_0>0$ such that  $8\pi \frac{H_0}{  M_0^2} =\gamma$, where $\gamma\in (0,1/2)$ is such that $\gamma\neq \frac{(2n)!}{2^{2n}(n!)^2}$ for all $n\geq 1$. Then there exists $u\in \E$ which realises\;\eqref{min}. Moreover
\begin{enumerate}
\item[(i)] The function $u$ is an $MP$-stationary wave.
\item[(ii)] The function $u$ statisfies $\int_\C z\vert u(z)\vert^2dL(z)=0$.
\item[(iii)] The function $u$ has an infinite number of zeros in $\C$.
\end{enumerate}
\end{prop}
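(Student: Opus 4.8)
The plan is to obtain the minimizer by the direct method, derive the stationary-wave equation by Lagrange multipliers, read off $Q(u)=0$ from the magnetic-translation invariance of the constraints, and exclude a finite number of zeros by matching the classification of Theorem~\ref{thm41} against the two arithmetic conditions on $\gamma$. First I would take a minimizing sequence $(u_n)$ with $M(u_n)=M_0$, $\mathcal H(u_n)=H_0$ and $P(u_n)$ converging to the infimum; the admissible set is nonempty because the scale-invariant ratio $8\pi\mathcal H(u)/M(u)^2$ is continuous on $\mathcal E\setminus\{0\}$, equals $1$ at $\phi_0$ and tends to $0$ along the $\phi_n$, so by connectedness it attains every value of $(0,1)$, in particular $\gamma$. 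Since $P(u_n)=\|zu_n\|_{L^2}^2-M_0$ stays bounded, $(u_n)$ is bounded in $L^{2,1}_{\mathcal E}$, which yields the tightness $\int_{|z|>R}|u_n|^2\,dL\lesssim R^{-2}$ uniformly in $n$. By \eqref{hypercontractivity} the entire parts are locally uniformly bounded, so Montel's theorem gives, along a subsequence, locally uniform convergence to some $u\in\mathcal E$; with tightness this is strong convergence in $L^2$, whence $M(u)=M_0$, and (interpolating with the uniform $L^\infty$ bound from \eqref{hypercontractivity}) strong convergence in $L^4$, whence $\mathcal H(u)=H_0$. As $P$ is lower semicontinuous for this convergence, $u$ realizes the minimum.

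\emph{Part (i).} Using $dM(u)\!\cdot\! h=2\Re\langle u,h\rangle$, $dP(u)\!\cdot\! h=2\Re\langle\Lambda u,h\rangle$ and $d\mathcal H(u)\!\cdot\! h=\Re\langle\Pi(|u|^2u),h\rangle$ for $h\in\mathcal E$, the Lagrange multiplier rule furnishes $(\nu_0,\nu_1,\nu_2)\neq 0$ with $2\nu_0\Lambda u=\nu_1\Pi(|u|^2u)+2\nu_2 u$. If $\nu_1=0$ and $\nu_0\neq 0$ then $\Lambda u\propto u$, so $u\propto\phi_n$ and $8\pi\mathcal H(u)/M(u)^2=\frac{(2n)!}{2^{2n}(n!)^2}=\gamma$, contradicting the hypotheses; the case $\nu_0=\nu_1=0$ forces $\nu_2=0$, which is excluded. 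Hence $\nu_1\neq 0$ and, dividing through, $\lambda u+\mu\Lambda u=\Pi(|u|^2u)$ for some real $\lambda,\mu$, i.e. $u$ is an $MP$-stationary wave.

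\emph{Part (ii).} Since $\mathcal H$ and $M$ are invariant under the magnetic translations $R_\alpha$, each $R_\alpha u$ is admissible, so $P(R_\alpha u)\geq P(u)$. The transformation law $P(R_\alpha u)=P(u)-2\Re(\overline\alpha Q(u))+|\alpha|^2M_0$ then gives $-2\Re(\overline\alpha Q(u))+|\alpha|^2M_0\geq 0$ for every $\alpha\in\C$; taking $\alpha=Q(u)/M_0$ yields $|Q(u)|^2\leq 0$, so $Q(u)=\int_\C z|u|^2\,dL=0$.

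\emph{Part (iii).} Suppose, for contradiction, that $u$ has finitely many zeros. As $u\in\mathcal E$, parts (i)--(ii) of Theorem~\ref{thm41} force $u$ to be, up to scaling, phase and space rotation, either a $\phi_n^\alpha$ or a $\psi_b$; the ratio $8\pi\mathcal H(u)/M(u)^2$ is preserved by these operations and so equals $\gamma$. For $\phi_n^\alpha$ this ratio is $\frac{(2n)!}{2^{2n}(n!)^2}$, which is $1$ for $n=0$ and $\frac12$ for $n=1$ (both ruled out by $\gamma<\frac12$) and is excluded for $n\geq 2$ by hypothesis; for $\psi_b$ it equals $1-\frac{1}{2(1+b^2)^2}\in[\frac12,1)$, again incompatible with $\gamma<\frac12$. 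This contradiction shows that $u$ has infinitely many zeros.

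The main obstacle is the existence step: one must preclude loss of compactness and verify that the nonlinear constraint $\mathcal H=H_0$ survives passage to the limit. Both are resolved by the a priori $L^{2,1}$ bound, which supplies tightness (hence no escape of mass to infinity) and, via hypercontractivity, the $L^\infty$ control needed to upgrade $L^2$ to $L^4$ convergence. The rest is a clean confrontation between the explicit energy ratios furnished by Theorem~\ref{thm41} and the two exclusions built into the definition of $\gamma$.
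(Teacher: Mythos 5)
Your proof is correct and follows essentially the same route as the paper: Euler--Lagrange multipliers for (i) with the degenerate case $\Lambda u\propto u$ excluded by the arithmetic hypothesis on $\gamma$, the transformation law $P(R_\alpha u)=P(u)-2\,\Re(\overline\alpha Q(u))+|\alpha|^2M(u)$ for (ii), and the classification of Theorem~\ref{thm41} for (iii). The only (welcome) deviations are that you spell out the compactness/existence step, which the paper leaves implicit, and that in (iii) you rule out the whole family $\phi_n^\alpha$ directly through the $\alpha$-independent scale-invariant ratio $8\pi\mathcal H/M^2$, whereas the paper first invokes $Q(u)=0$ to reduce to $\alpha=0$ --- both arguments are valid.
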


\begin{proof}
$(i)$ The Euler-Lagrange equation  corresponding to the problem \eqref{min} reads
\begin{equation*}
\Lambda u=\lambda u +\mu\Pi (|u|^2 u).
\end{equation*}
In order to get a $MP$-stationary wave, we have to check that $\mu\neq 0$. If $\mu=0$, then $u$ is an eigenfunction of $\Lambda$ in $\E$, thus $u(z)= \frac{z^n}{\sqrt{\pi n!}}  e^{- \frac{1}{2}|z|^2}  $ up to a constant factor. For such a $u$ we have   
\begin{equation*}
M(u)=1,\quad \quad \mathcal{H}(u)=\frac{1}{8\pi}\frac{(2n)!}{2^{2n}(n!)^2}\quad \mbox{and}\quad 8\pi \frac{\mathcal{H}(u)}{M^2(u)}=\frac{(2n)!}{2^{2n}(n!)^2},
\end{equation*}
which is excluded by assumption (by the way we check that the sequence $(2n)! /  ((n!)^2 2^{2n})$ is decreasing and equals $1/2$ when $n=1$). 
\medskip

$(ii)$ Let $\alpha \in \R$ and recall the definition \eqref{defR} of $R_{\alpha}$. Then $\mathcal{H}(R_\alpha u)=\mathcal{H}( u)$ and $M(R_\alpha u)=M( u)$, and we can check that 
\begin{equation*}
P(R_\alpha u)=P(u)-\alpha \int_\C(z+\ov{z})\vert u(z)\vert^2dL(z)+\alpha^2 \int_\C\vert u(z)\vert^2dL(z).
\end{equation*}
Thus, if $u$ realises the minimum in \eqref{min}, we get $\int_\C(z+\ov{z})\vert u(z)\vert^2dL(z)=0$, and the same argument with $R_{ i \alpha}$ then implies  $\int_\C z\vert u(z)\vert^2dL(z)=0$ --- see also Lemma \ref{Q=0}.
\medskip


$(iii)$ For this part, we rely on the classification in Theorem \ref{thm41} of the  $MP$-stationary waves which have a finite number of zeros. By the symmetries of the problem, we can assume that $A=1$ and $\varphi=0$.

$\bullet$ If  $u(z)=  (z-\ov{\alpha})^n e^{\alpha z-\frac{1}{2}|z|^2}$, then the condition $\int_\C z\vert u(z)\vert^2dL(z)=0$ implies $\alpha=0$. Thus we are reduced to the case    $ u(z)= A z^n e^{-\frac{1}{2}|z|^2}$ which is excluded, as we already observed.

$\bullet$ Assume that $u(z)= \Big(z-\frac{b(2+b^2)}{1+b^2}  \Big) e^{a z - \frac{1}{2}|z|^2} $ with $a=\frac{b}{1+b^2}$, $b\in \R$.  Then thanks to \eqref{bullfinch} we obtain for $v(z)=(z+\beta)e^{\alpha z- \frac{1}{2}|z|^2}$ with $\alpha,\beta\in \R$
\begin{equation*}
\frac{1}{\pi} \int_\mathbb{C} z\vert v(z)\vert ^2  dL(w) = (2\alpha+\beta+\alpha\beta^2+2\alpha^2 \beta+\alpha^3)  e^{{\beta^2}}.
\end{equation*}
Therefore, by \eqref{vinc}, for all $b\in \R$
\begin{equation*}
  \int_\mathbb{C} z\vert u(z)\vert ^2  dL(w) =0, \quad \mbox{when}\quad \alpha =\frac{b}{1+b^2}, \;\;\beta =-\frac{b(2+b^2)}{1+b^2} .
\end{equation*}
With this choice $R_{-\beta}u(z)=c_b ze^{-b z - \frac{1}{2}|z|^2}$ with $c_b=e^{-\frac{b^4(2+b^2)}{2(1+b^2)^2}}$, and thus 
\begin{equation*}
M(u)=\pi c_b (1+b^2)e^{b^2}, \quad \quad    \mathcal{H}(u)=\frac{\pi}{4} c^2_b (1+4b^2+2b^4)e^{2b^2}, 
\end{equation*}
which implies 
\begin{equation}\label{quot}
8\pi \frac{\mathcal{H}(u)}{M^2(u)}=\frac{1+4b^2+2b^4}{2(1+b^2)^2}\in [\frac12,1).
\end{equation}

 Hence if we choose $M_0,H_0$ as in the proposition, the stationary solution we find has an infinite number of zeros, by Theorem \ref{thm41}. 
\end{proof}
\medskip

Now we consider the minimizing problem \eqref{min}, when  $8\pi \frac{H_0}{  M_0^2} =\gamma\in [1/2,1)$. In this case, by~\eqref{quot}, there exists a unique $b\geq 0$ such that $8\pi \frac{ \mathcal{H}(\psi_b)}{  M(\psi_b)^2}=\gamma$, and we have

\begin{prop}[Local minimizers]\label{proplocalmin2}
Let $b\geq 0$ and consider  the minimization problem
\begin{equation*} 
\min_{\substack{\mathcal{H}(u)=H_0\\M(u)=1}}P(u),
\end{equation*}
with $H_0=\mathcal{H}(\psi_b) = \frac{1}{8 \pi} \left( 1 - \frac{1}{2(1+b^2)^2} \right)$. Then the function $\psi_b$ is a strict local minimizer (modulo the rotation of phase and the rotation of space symmetries).
\end{prop}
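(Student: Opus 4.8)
The plan is to prove strict local minimality by a constrained second–variation argument, recast through the functional $G_\mu$. Since $\psi_b$ is an $MP$-stationary wave (Theorem~\ref{thm41}), it is automatically a critical point of the constrained problem; comparing the Euler--Lagrange equation $\Lambda u=\lambda u+\mu\Pi(|u|^2u)$ of Proposition~\ref{PropminP}$(i)$ with the relation $\Pi(|\psi_b|^2\psi_b)=\tilde\lambda\psi_b-\frac1{8\pi}\Lambda\psi_b$ from Theorem~\ref{thm41} fixes the Lagrange multipliers, and in particular $\mu=-8\pi$. The key observation is that on the constraint set $\{M=1,\ \mathcal H=H_0\}$ one has $G_{1/2}=8\pi\mathcal H+\tfrac12P=8\pi H_0+\tfrac12 P$, so minimizing $P$ there is equivalent to minimizing $G_{1/2}$ there. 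Hence it suffices to show that $\psi_b$ is a strict local minimizer of $G_{1/2}$ on $\{M=1,\ \mathcal H=H_0\}$, modulo phase and space rotations.

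Next I would compute the second variation. Since $M$ and $\mathcal H$ are constant on the constraint set, the second variation of $P$ along curves staying in $\{M=1,\ \mathcal H=H_0\}$ equals the Hessian of the Lagrangian $\mathcal J:=P+16\pi\mathcal H-\lambda M=2G_{1/2}-\lambda M$ on the tangent space at $\psi_b$. Because on the mass sphere $\{M=1\}$ the functionals $\mathcal J$ and $2G_{1/2}$ differ by a constant (the multipliers are consistent: $2\nu=\lambda$), this reduces to twice the fixed-mass Hessian of $G_{1/2}$, now restricted to the smaller tangent space that also annihilates $d\mathcal H$; I would evaluate it in the coordinates $(c_n)$ exactly as in the proof of Proposition~\ref{proplocalmin}$(ii)$. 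The expected outcome is that this quadratic form is positive semidefinite and that, modulo the two symmetry directions $i\psi_b$ (phase) and $i\Lambda\psi_b$ (space rotation), its kernel is exactly one–dimensional, spanned by the family direction $\partial_b\psi_b$. That $\partial_b\psi_b$ lies in the kernel is structural rather than computational: since $G_{1/2}(\psi_{b'})\equiv 1$ and $M(\psi_{b'})\equiv 1$ along the whole family, differentiating twice in $b'$ and using that $\psi_b$ is critical forces the fixed-mass Hessian of $G_{1/2}$ to annihilate $\partial_b\psi_b$.

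To conclude I would use the extra constraint $\mathcal H=H_0$ to remove this last null direction. For $b>0$ one computes $\partial_b\mathcal H(\psi_b)=\frac{b}{4\pi(1+b^2)^3}\neq 0$, so $\partial_b\psi_b$ is not tangent to $\{dM=0,\ d\mathcal H=0\}$; eliminating it leaves a positive–definite form on $T_{\psi_b}\{M=1,\ \mathcal H=H_0\}$ modulo symmetries, which gives the strict local minimality. The case $b=0$, where $\psi_0=\phi_1$, is borderline: there $\partial_b\mathcal H=0$, the null direction $\zeta=\partial_b\psi_b|_{b=0}=-2\phi_0+\sqrt2\,\phi_2$ remains tangent to the constraint set, and the Hessian is only semidefinite. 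Here I would argue at higher order: because $\partial_b^2\mathcal H(\psi_b)|_{b=0}=\frac1{4\pi}>0$, any curve in $\{M=1,\ \mathcal H=H_0\}$ tangent to $\zeta$ must bend off the family $\{\psi_{b'}\}$ to keep $\mathcal H$ fixed, and off that family $G_{1/2}-M^2$ is strictly positive by nondegeneracy of the Hessian in the transverse directions; a Lyapunov--Schmidt reduction onto the one–dimensional kernel should then produce a strictly positive quartic leading term.

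The main obstacle is the positivity of the fixed-mass second variation of $G_{1/2}$ at $\psi_b$ for general $b$: unlike $\phi_0$ or $\phi_1$, the wave $\psi_b$ has infinitely many nonzero coefficients $c_n$, so the quadratic form in $(c_n)$ is a genuinely non-diagonal infinite form, and showing it is positive semidefinite with precisely the expected one-dimensional kernel (after quotienting the symmetries) is the technical heart of the argument. The secondary difficulty is the degenerate case $b=0$, which escapes the clean transversality argument — there the gradients $dM$ and $d\mathcal H$ are parallel, the constraint set fails to be a smooth codimension-two manifold at $\phi_1$, and the strictness must be extracted from the constraint curvature and the higher-order analysis sketched above.
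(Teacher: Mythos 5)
Your framing is sound as far as it goes: $\psi_b$ is a constrained critical point, the second variation of $P$ on $\{M=1,\ \mathcal H=H_0\}$ is the Hessian of the Lagrangian $P+16\pi\mathcal H-\lambda M$ evaluated on the constrained tangent space, and your multiplier bookkeeping ($\mu=-8\pi$, hence $G_{1/2}$) is correct. But the proposal stops exactly where the proof has to start. You yourself name as ``the main obstacle'' the fact that in the coordinates $(c_n)$ the wave $\psi_b$ has infinitely many nonzero coefficients, so the constrained Hessian is a genuinely non-diagonal infinite quadratic form --- and you offer no way past this. The device the paper uses, and which is missing from your argument, is to conjugate by the magnetic translation $R_\alpha$ with $\alpha=\frac{b}{1+b^2}$: in the translated basis $(\phi_n^\alpha)$ one has $\psi_b=c_0\phi_0^\alpha+c_1\phi_1^\alpha$ with only \emph{two} nonzero coefficients; $R_\alpha$ preserves $M$ and $\mathcal H$, and $P(R_{-\alpha}u)=P(u)+2\alpha\,\Re\, Q(u)+\alpha^2$ is explicitly computable. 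Because the base point has only two active modes, the cubic nonlinearity couples $\dot c_n$ only to its nearest neighbours, and the second variation splits into a small explicit block $\mathcal Q_1$ in $(\dot c_0,\dot c_1,\dot c_2)$ plus a nearly diagonal, manifestly positive tail $\mathcal Q_2+\mathcal Q_3$; the first-order constraint relations \eqref{cond1}--\eqref{cond2} then reduce $\mathcal Q_1$ to a sum of two squares whose kernel is exactly the two symmetry directions. Without this (or an equivalent) reduction, the ``technical heart'' you defer is simply not done, so the proposal is not a proof.

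Two further remarks. First, your intermediate claim --- that the fixed-mass Hessian of $G_{1/2}$ at $\psi_b$ is positive semidefinite with kernel spanned by the symmetries together with $\partial_b\psi_b$ alone --- is strictly \emph{stronger} than what the proposition requires (positivity is only needed on the smaller subspace where $d\mathcal H$ also vanishes), it is nowhere established in the paper, and a hypothetical negative direction of that larger form transverse to $\{d\mathcal H=0\}$ would not contradict the statement; so you cannot take this semidefiniteness for granted, and routing the argument through it makes your task harder rather than easier. Second, your diagnosis of the case $b=0$ is legitimate: there $d\mathcal H$ and $dM$ are parallel at $\phi_1$, the second constraint \eqref{cond2} degenerates to the first, and the constrained quadratic form acquires extra null directions coming from $\partial_b(L_\phi\psi_b)|_{b=0}$, so a higher-order (Lyapunov--Schmidt type) argument is indeed needed at that point --- this is a genuine subtlety which the paper's own computation, which deduces $x_2=-\sqrt2\,x_0$ from \eqref{cond2} only when $b\neq0$, treats rather quickly. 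Your sketch of how to handle it is plausible but, like the main step, is not carried out.
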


\begin{proof}
Let $b\geq 0$ and recall that $\psi_b(z) = \frac{e^{-\frac{1}{2} \left( \frac{b}{1+b^2} \right)^2}}{\sqrt{\pi(1+b^2)}} \left( z - \frac{b(2+b^2)}{1+b^2}  \right) e^{-\frac{1}{2}|z|^2 + \frac{b}{1+b^2} z} $. We set $\alpha=\frac{b}{1+b^2}$. Consider a deformation of $\psi_b$ at constant mass $M=1$ and constant Hamiltonian ${\mathcal{H}=\mathcal{H}(\psi_b)}$ in   coordinates given by the $(\phi_n^{\alpha})_{n \geq 0}$. We have
$$v(s,z) =\sum_{n=0}^{+\infty}c_n(s) \phi_n^{\alpha}(z), \quad \alpha=\frac{b}{1+b^2},$$
with 
\begin{equation}\label{defc}
c_0(0)=-\frac{b}{\sqrt{1+b^2}}, \quad c_1(0)=\frac{1}{\sqrt{1+b^2}}, \quad c_n(0)=0\; \text{ for }\; n\geq 2.
\end{equation}
The condition  $\sum_{n=0}^{+\infty} |c_n(s)|^2 =1$ gives after differentiation 
\begin{equation}\label{cond1}
-b \mathfrak{Re} (\dot{c}_{0}(0))+\mathfrak{Re} (\dot{c}_{1}(0))=0,
\end{equation}
and differentiating a second time
\begin{equation}\label{dcc1}
-\frac{b}{\sqrt{1+b^2}} \mathfrak{Re} (\ddot{c}_{0}(0))+\frac{1}{\sqrt{1+b^2}}\mathfrak{Re} (\ddot{c}_{1}(0))+\sum_{n=0}^{+\infty} |\dot c_n(0)|^2=0.
\end{equation}

Next
\begin{eqnarray*}
 \frac{d}{ds}  H(u)= 0&=&\mathfrak{Re} \sum_{\substack{ k,\ell,m,n \geq 0 \\ k + \ell = m + n }}   \frac{(k + \ell)!}{2^{k+\ell} \sqrt{k! \ell ! m ! n!}} \overline{c_k} \overline{c_\ell} \dot{c}_m c_n \\
 &=& (c^3_0+ c_0 c^2_1)\mathfrak{Re} \dot{c}_0+(c^2_0c_1+ \frac12 c^3_1)\mathfrak{Re} \dot{c}_1+\frac{\sqrt{2}}{4}c_0c^2_1 \mathfrak{Re} \dot{c}_2,
\end{eqnarray*}
hence
\begin{equation}\label{cond2}
-b(1+b^2) \mathfrak{Re} \dot{c}_0 +(b^2+\frac12)\mathfrak{Re} \dot{c}_1-\frac{\sqrt{2}}4b\mathfrak{Re} \dot{c}_2=0.
\end{equation}

Define 
$$u(s,z)=R_{\alpha}v(s,z) =\sum_{n=0}^{+\infty}c_n(s) \phi_n(z),$$

\begin{eqnarray*}
P(v)&=&P(R_{-\alpha} u)=P(u)+2 \alpha \mathfrak{Re} (Q(u))+\alpha^2\\
&=& \sum_{n=0}^{+\infty}n |c_n|^2 +\frac{2b}{1+b^2}  \mathfrak{Re} \Big(  \sum_{n=0}^{+\infty}\sqrt{n+1} c_n \ov{c_{n+1}}  \Big)+\Big( \frac{b}{1+b^2} \Big)^2.
\end{eqnarray*}
Firstly, one checks that $\frac{d}{ds}  P(v)=0$ at $s=0$, thanks to \eqref{cond1} and \eqref{cond2}. \medskip

An immediate computation shows that (everything being evaluated at $s=0$)
\begin{multline}
\left( \frac{d}{ds} \right)^2 P(v)=\\
\begin{aligned}\label{defpp}
 &= 2 \sum_{n=0}^{+\infty} n|\dot c_n|^2+2 c_1  \mathfrak{Re} \ddot{c_1} +\frac{2b}{1+b^2} \Big(  c_1  \mathfrak{Re} \ddot{c_0}+2   \mathfrak{Re}\Big(  \sum_{n=0}^{+\infty}\sqrt{n+1} \dot{c_n} \ov{\dot{c}_{n+1}}  \Big)+ c_0  \mathfrak{Re} \ddot{c_1}+\sqrt{2} c_1  \mathfrak{Re} \ddot{c_2}\Big)\\
&= 2 \sum_{n=0}^{+\infty} n|\dot c_n|^2+  \frac{4b}{1+b^2} \mathfrak{Re}\Big(  \sum_{n=0}^{+\infty}\sqrt{n+1} \dot{c_n} \ov{\dot{c}_{n+1}}  \Big)   +  
\frac{2}{(1+b^2)^{\frac32}} \big(  b \mathfrak{Re} \ddot{c_0} +    \mathfrak{Re} \ddot{c_1} +   \sqrt{2}  b\mathfrak{Re} \ddot{c_2}\Big). 
\end{aligned}
\end{multline}
The condition $\left( \frac{d}{ds} \right)^2H=0$ at $s=0$ gives
\begin{multline*}
 \mathfrak{Re} \sum_{\substack{ k,\ell,m,n \geq 0 \\ k + \ell = m + n }}   \frac{(k + \ell)!}{2^{k+\ell} \sqrt{k! \ell ! m ! n!}} \overline{c_k} \overline{c_\ell} \dot{c}_m \dot{c}_n+ \mathfrak{Re} \sum_{\substack{ k,\ell,m,n \geq 0 \\ k + \ell = m + n }}   \frac{(k + \ell)!}{2^{k+\ell} \sqrt{k! \ell ! m ! n!}} \overline{c_k} \overline{c_\ell} \ddot{c}_m c_n \\
 \qquad \qquad  \qquad  \qquad  \qquad  \qquad   +2\mathfrak{Re} \sum_{\substack{ k,\ell,m,n \geq 0 \\ k + \ell = m + n }}   \frac{(k + \ell)!}{2^{k+\ell} \sqrt{k! \ell ! m ! n!}} \overline{c_k} \overline{\dot{c}_\ell} \dot{c}_m c_n=\\
 = c^2_0\mathfrak{Re} (\dot{c}^2_0)+2c_0c_1\mathfrak{Re} (\dot{c}_0\dot{c}_1)+ \frac12 c^2_1\mathfrak{Re} (\dot{c}^2_1) + \frac{\sqrt{2}}2c^2_1 \mathfrak{Re} ( \dot{c}_0\dot{c}_2)\\
 + (c^3_0+ c_0 c^2_1)\mathfrak{Re} \ddot{c}_0+(c^2_0c_1+ \frac12 c^3_1)\mathfrak{Re} \ddot{c}_1+\frac{\sqrt{2}}{4}c_0c^2_1 \mathfrak{Re} \ddot{c}_2\\
 +2c^2_0   \sum_{m=0}^{+\infty}\frac{1}{2^m}|\dot{c}_m|^2+c^2_1     \sum_{m=0}^{+\infty}\frac{(m+1)}{2^m}|\dot{c}_m|^2 +2c_0c_1 \mathfrak{Re}  \sum_{m=0}^{+\infty}\frac{\sqrt{m+1}}{2^m}\dot{c}_m \ov{\dot{c}_{m+1}}  =0.
 \end{multline*}
 Then by \eqref{defc}, the previous line reads
 \begin{equation}\label{dcc2}
 \frac1{(1+b^2)^{\frac12}} \Big(   -b(1+b^2)\mathfrak{Re} \ddot{c}_0+(b^2+\frac12)\mathfrak{Re} \ddot{c}_1-\frac{\sqrt{2}}{4}b \mathfrak{Re} \ddot{c}_2 \Big)
 +  \Sigma  =0,
\end{equation}
with 
 \begin{multline*} 
\Sigma= b^2\mathfrak{Re} (\dot{c}^2_0)-2b\mathfrak{Re} (\dot{c}_0\dot{c}_1)+ \frac12 \mathfrak{Re} (\dot{c}^2_1) + \frac{\sqrt{2}}2  \mathfrak{Re} ( \dot{c}_0\dot{c}_2)\\
 \\
 +      \sum_{m=0}^{+\infty}\frac{(2b^2+m+1)}{2^m}|\dot{c}_m|^2 -2b \mathfrak{Re}  \sum_{m=0}^{+\infty}\frac{\sqrt{m+1}}{2^m}\dot{c}_m \ov{\dot{c}_{m+1}} .
\end{multline*}

 We simplify the last term in \eqref{defpp}. Thanks to \eqref{dcc1} and \eqref{dcc2} we obtain
  \begin{multline*}
 \frac{1}{(1+b^2)^{\frac12}} \big(  b \mathfrak{Re} \ddot{c_0} +    \mathfrak{Re} \ddot{c_1} +   \sqrt{2}  b\mathfrak{Re} \ddot{c_2}\Big)= -(4b^2+3)  \sum_{n=0}^{+\infty} |\dot{c}_n|^2+4\Sigma=\\
 =-(4b^2+3)  \sum_{n=0}^{+\infty} |\dot{c}_n|^2+4\Big[ b^2\mathfrak{Re} (\dot{c}^2_0)-2b\mathfrak{Re} (\dot{c}_0\dot{c}_1)+ \frac12 \mathfrak{Re} (\dot{c}^2_1) + \frac{\sqrt{2}}2  \mathfrak{Re} ( \dot{c}_0\dot{c}_2)  \Big]\\
 +4\Big[        \sum_{m=0}^{+\infty}\frac{(2b^2+m+1)}{2^m}|\dot{c}_m|^2 -2b \mathfrak{Re}  \sum_{m=0}^{+\infty}\frac{\sqrt{m+1}}{2^m}\dot{c}_m \ov{\dot{c}_{m+1}}     \Big].
 \end{multline*}
As a consequence, from \eqref{defpp} we get
 \begin{multline}\label{517}
\left( \frac{d}{ds} \right)^2 P(v)=
2 \sum_{n=0}^{+\infty} n|\dot c_n|^2+  \frac{4b}{1+b^2} \mathfrak{Re}\Big(  \sum_{n=0}^{+\infty}\sqrt{n+1} \dot{c_n} \ov{\dot{c}_{n+1}}  \Big)  \\ +  
\frac{2}{(1+b^2)} \Big[    -(4b^2+3)  \sum_{n=0}^{+\infty} |\dot{c}_n|^2+4\big( b^2\mathfrak{Re} (\dot{c}^2_0)-2b\mathfrak{Re} (\dot{c}_0\dot{c}_1)+ \frac12 \mathfrak{Re} (\dot{c}^2_1) + \frac{\sqrt{2}}2  \mathfrak{Re} ( \dot{c}_0\dot{c}_2)  \big)\Big]\\
 +\frac{8}{(1+b^2)}\Big[        \sum_{m=0}^{+\infty}\frac{(2b^2+m+1)}{2^m}|\dot{c}_m|^2 -2b \mathfrak{Re}  \sum_{m=0}^{+\infty}\frac{\sqrt{m+1}}{2^m}\dot{c}_m \ov{\dot{c}_{m+1}}  \Big]\\
 := \frac{2}{1+b^2}\big(\mathcal{Q}_1+\mathcal{Q}_2+\mathcal{Q}_3\big),
 \end{multline}
 where 
  \begin{multline*}
 \mathcal{Q}_1 =   (4b^2+1)|\dot c_0|^2+  (b^2+2)|\dot c_1|^2+ 2|\dot c_2|^2+ 4b^2\mathfrak{Re} (\dot{c}^2_0)+2\mathfrak{Re} (\dot{c}^2_1)\\
 - 6b\mathfrak{Re} (\dot{c}_0\ov{\dot{c}_1}) - 8b\mathfrak{Re} (\dot{c}_0\dot{c}_1) +2\sqrt{2} \mathfrak{Re} (\dot{c}_0\dot{c}_2) - 2\sqrt{2}b\mathfrak{Re} (\dot{c}_1\ov{\dot{c}_2}),
 \end{multline*}
  \begin{multline*}
 \mathcal{Q}_2=
 \sum_{n=3}^{4} \big[ (n-4+\frac{8}{2^n})b^2+n-3+\frac{4(n+1)}{2^n}\big]|\dot c_n|^2+  2b \mathfrak{Re} \sum_{n=3}^{4}(1-\frac4{2^n})\sqrt{n+1} \dot{c_n} \ov{\dot{c}_{n+1}}   \\
 = \sum_{n=3}^{4} \big[  n-3+\frac{4(n+1)}{2^n}\big]|\dot c_n|^2+b^2\sum_{n=3}^{4} \big[  n-3+\frac{4}{2^n}\big]|\dot c_{n+1}|^2+  2b \mathfrak{Re} \sum_{n=3}^{4}(1-\frac4{2^n})\sqrt{n+1} \dot{c_n} \ov{\dot{c}_{n+1}}  
  \end{multline*}
 and
  \begin{equation*}
 \mathcal{Q}_3
 = \sum_{n=5}^{+\infty} \big[  n-3+\frac{4(n+1)}{2^n}\big]|\dot c_n|^2+b^2\sum_{n=5}^{+\infty} \big[  n-3+\frac{4}{2^n}\big]|\dot c_{n+1}|^2+  2b \mathfrak{Re} \sum_{n=5}^{+\infty}(1-\frac4{2^n})\sqrt{n+1} \dot{c_n} \ov{\dot{c}_{n+1}}  
  \end{equation*}
(one can notice that the interaction $\mathfrak{Re} (\dot{c}_2\ov{\dot{c}_3})$ vanishes in \eqref{517}).\medskip

Let us now study the sign of \eqref{517}.\medskip

\underline{The quadratic form $ \mathcal{Q}_3$ is positive definite} : 
For $n\geq 5$ one has the equality
$$(1-\frac4{2^n})^2(n+1) < \big( n-3+\frac{4(n+1)}{2^n}\big)\big( n-3+\frac{4}{2^n}\big),$$
then one get $ \mathcal{Q}_3 > 0$.\medskip

\underline{The quadratic form $ \mathcal{Q}_2+\mathcal{Q}_3$ is positive definite} : Set ${c_j=x_j+iy_j}$, then 
$$\mathcal{Q}_2= 2(x_3+\frac{b}2x_4)^2+2(y_3+\frac{b}2y_4)^2+(\frac32 x_4+\frac{\sqrt{5}b}2 x_5)^2+(\frac32 y_4+\frac{\sqrt{5}b}2 y_5)^2,$$
and the claim follows. \medskip

\underline{Under the constraints \eqref{cond1} and  \eqref{cond2}, the quadratic form $ \mathcal{Q}_1$ is non-negative} : 
Set ${c_j=x_j+iy_j}$. Then  \eqref{cond1} and  \eqref{cond2} imply that $x_1=bx_0$ and $x_2=-\sqrt{2}x_0$. Therefore 
\begin{eqnarray*}
\mathcal{Q}_1&=&(b^2+1)^2 x^2_0+ y^2_0+b^2y^2_1+2y^2_2+2b y_0y_1-2\sqrt{2}y_0y_2-2\sqrt{2}by_1y_2\\[3pt]
 &=&(b^2+1)^2 x^2_0+ (y_0+by_1-\sqrt{2}y_2)^2.
\end{eqnarray*}
The matrix of this quadratic form has two positive eigenvalues \big($(b^2+1)^2$ and $(b^2+3)$\big), and  the eigenvalue 0 has multiplicity 2, which corresponds to the symmetries $T_\gamma$ and $L_{\phi}$.
\end{proof}

\subsection{Stability of stationary waves with finite mass and a finite number of zeros}

\begin{thm} \begin{itemize}
\item[(i)] The stationary wave $\phi_0^\alpha$, for $\alpha \in \mathbb{C}$, is orbitally stable in $L^2$ for the symmetries of the equation. More precisely, there exists $C>0, \delta _0>0$ such that, if  $\| u_0 - \phi_0^\alpha \|_{L^2(\C)}=\delta \leq \delta _0$, then   the associated solution $u$ of~\eqref{LLL} satisfies
$$
\sup_{t\in \R}\inf_{\theta \in \mathbb{T}, \,\beta \in \mathbb{C}} \left\| u(t) - e^{i\theta} \phi_0^\beta \right\|_{L^2(\C)} \leq C\sqrt{\delta }.
$$
\item[(ii)] The stationary waves $\phi_0^\alpha$ and $\phi_1^\alpha$ are orbitally stable in $L^{2,1}$ for the phase rotation symmetry. More precisely, there exists $C>0, \delta _0>0$ such that, if $j=0$ or $1$, $\| u_0 - \phi_j\|_{L^{2,1}(\C)}=\delta \leq \delta _0$, then   the associated solution $u$ of~\eqref{LLL} satisfies
$$
\sup_{t\in \R}\inf_{\theta \in \mathbb{T}} \| u(t) - e^{i\theta} \phi_j \|_{L^{2,1}(\C)} \leq C\sqrt{\delta }\ .
$$
\item[(iii)] For all $b\geq 0$, the stationary waves $\psi_b$   are orbitally stable in $L^{2,1}$ for the phase rotation and the space rotation. More precisely, there exists $C>0, \delta _0>0$ such that  $\| u_0 - \psi_b\|_{L^{2,1}(\C)}=\delta \leq \delta _0$, then   the associated solution $u$ of~\eqref{LLL} satisfies
$$
\sup_{t\in \R}\inf_{\theta \in \mathbb{T}, s\in \R} \| u(t) - e^{i\theta} L_s\psi_b\|_{L^{2,1}(\C)} \leq C\sqrt{\delta }\ .
$$
\item[(iv)] More generally,  consider $v_0(z)=(\lambda_0 z + \mu_0) e^{\alpha_0 z - \frac{1}{2} |z|^2}$. Then  there exists $C>0, \delta _0>0$ such that  $\| u_0 -v_0 \|_{L^{2,1}(\C)}=\delta \leq \delta _0$, then   the associated solution $u$ of~\eqref{LLL} satisfies
$$
\sup_{t\in \R}\inf_{\theta \in \mathbb{T}, s\in \R, \alpha \in \C} \| u(t) - e^{i\theta} L_sR_{\alpha}\psi_b\|_{L^{2,1}(\C)} \leq C\sqrt{\delta }\ .
$$
\item[(v)] The stationary waves $\phi_n^\alpha$, $n \geq 2$, are not orbitally stable.
\end{itemize}
\end{thm}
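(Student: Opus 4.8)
All the stability statements (i)--(iv) can be obtained by the classical Lyapunov / concentration--compactness method, feeding on the variational characterizations already established; only the instability statement (v) needs a genuinely dynamical input, and this is where the main difficulty lies. For (i), the plan is to use that, by Proposition~\ref{prop51}, $\phi_0$ (i.e. $e^{-|z|^2/2}$ up to normalization) is the \emph{unique} maximizer of $\mathcal{H}$ at fixed mass, together with the compactness of maximizing sequences proved in Corollary~\ref{coromin}. I would argue by contradiction: were $\phi_0^\alpha$ not orbitally stable in $L^2$, there would exist $\eps_0>0$, data $u_{0,k}\to\phi_0$ in $L^2$ and times $t_k$ with $\inf\|u_k(t_k)-e^{i\theta}R_\beta\phi_0\|_{L^2}\geq\eps_0$. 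Since $M$ and $\mathcal{H}$ are conserved and continuous on $L^2$ (the latter by \eqref{hypercontractivity}), the functions $u_k(t_k)$ satisfy $M\to M(\phi_0)$ and $\mathcal{H}\to\max$, hence form a maximizing sequence; Corollary~\ref{coromin} then forces convergence, modulo a magnetic translation and a phase, to $\phi_0$, contradicting the separation. This yields stability for all the symmetries of the equation.

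For (ii) and (iii) I would build a coercive Lyapunov functional out of the conserved quantities. In case (ii), choose $\mu>\tfrac12$ for $\phi_0$ and $\tfrac5{32}<\mu<\tfrac12$ for $\phi_1$; by Proposition~\ref{proplocalmin} these are strict local minimizers of the conserved functional $G_\mu=8\pi\mathcal{H}+\mu P$ at fixed mass, modulo phase. The crucial point is that the second variation computed there, namely $\sum_{n\geq1}\bigl(\tfrac{8}{2^n}+2\mu n-4\bigr)|\dot c_n|^2$ for $\phi_0$ and its analogue for $\phi_1$, has coefficients bounded below by a constant times $(n+1)$ precisely in the admissible range of $\mu$; it therefore controls the full $L^{2,1}$-norm of the perturbation modulo the phase direction, and a standard Lyapunov argument (conservation of $G_\mu$ and $M$, continuity of $\mathcal{H},P$ on $L^{2,1}$) gives orbital stability in $L^{2,1}$ for the phase rotation. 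The $\sqrt\delta$ rate is the usual one, coming from the quadratic coercivity weighed against the at-most-linear defect of $G_\mu$ produced by the uncontrolled deviation of the mass. Case (iii) is identical in spirit, using Proposition~\ref{proplocalmin2}: $\psi_b$ is a strict local minimizer of $P$ at fixed $\mathcal{H}$ and $M$, and the decomposition $(\tfrac{d}{ds})^2P=\tfrac{2}{1+b^2}(\mathcal{Q}_1+\mathcal{Q}_2+\mathcal{Q}_3)$ shows the second variation to be positive definite, with $\mathcal{Q}_3$ supplying $L^{2,1}$-coercivity on the high modes, modulo the two-dimensional kernel generated by the phase and space rotations. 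The Lyapunov functional is then built from $P,\mathcal{H},M$ restricted to their (conserved) level sets, which gives stability for phase and space rotation.

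Statement (iv) I would deduce from (iii) with no further analysis: by Proposition~\ref{prop43}, every $v_0=(\lambda_0z+\mu_0)e^{\alpha_0z-|z|^2/2}$ equals $c\,L_\varphi R_a\psi_b$ for suitable parameters, so $v_0$ lies on the orbit of some $\psi_b$ under the symmetry group. Since $T_\theta,L_s,R_\alpha$ map solutions to solutions and act boundedly and bijectively on $L^{2,1}$, the stability of $\psi_b$ transfers to $v_0$, the magnetic translation $R_\alpha$ entering as the extra symmetry needed because $Q(v_0)\neq0$ in general.

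Finally, for the instability statement (v), I would start from Proposition~\ref{proplocalmin}(iv): for $n\geq2$ the function $\phi_n$ is \emph{not} a local minimizer of $G_\mu$ for any $\mu>0$. For $n=2$ the explicit competitor $\eps\phi_0+\sqrt{1-2\eps^2}\,\phi_2-\eps\phi_4$ has exactly the same mass, angular momentum and magnetic momentum as $\phi_2$ but strictly smaller $\mathcal{H}$, which exhibits $\phi_2$ as a \emph{saddle} of $\mathcal{H}$ on the manifold $\{M,P,Q\ \text{fixed}\}$, with a negative direction transverse to the symmetry orbit; the same holds for $n\geq3$. The plan is then to convert this variational saddle structure into genuine orbital instability by a Grillakis--Shatah--Strauss type argument: set $\mathcal{L}=8\pi\mathcal{H}+\mu P-\nu M$ so that $\phi_n$ is a critical point, observe that $\mathcal{L}''(\phi_n)$ carries a negative direction modulo its kernel and the constraints, and construct an auxiliary functional, monotone along the flow on a suitable conic neighborhood of the orbit, whose growth is incompatible with the solution remaining close to the orbit of $\phi_n$. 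I expect this last step to be the main obstacle: the stable cases reduce to coercivity computations already in hand, whereas the instability requires either the full Grillakis--Shatah--Strauss machinery (construction of the escape direction together with the convexity estimate) or a direct dynamical construction exploiting the negative mode, and care is needed because several conserved quantities and a multi-dimensional symmetry group are simultaneously in play.
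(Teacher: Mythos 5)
Your treatment of (i)--(iv) follows the paper's route: (ii), (iii) and (iv) are indeed deduced from Propositions \ref{proplocalmin}, \ref{proplocalmin2} and \ref{prop43} respectively, and (i) rests on the compactness of maximizing sequences from Corollary \ref{coromin}. One caveat on (i): the contradiction/compactness argument you describe only yields qualitative orbital stability (for every $\eps_0$ there is a $\delta$), not the quantitative rate $C\sqrt{\delta}$ asserted in the theorem. The paper gets the rate by supplementing Corollary \ref{coromin} and modulation with a coercivity estimate for the Hessian $\mathcal L$ of $\frac12 M-2\pi\mathcal H$ at $\phi_0$ (kernel spanned by $i\phi_0,\phi_1,i\phi_1$, nonzero eigenvalues $-2$ and $1-2^{1-n}$ bounded away from zero), which gives $\inf_{\theta,\beta}\|u-e^{i\theta}R_\beta\phi_0\|_{L^2}^2\lesssim |M(u)-M(\phi_0)|+|\mathcal H(u)-\mathcal H(\phi_0)|$; you would need to add this ingredient to reach the stated bound.

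For (v) your plan diverges from the paper's and, as written, contains a genuine gap. That $\phi_n$, $n\ge 2$, fails to be a local minimizer of $G_\mu$ for every $\mu>0$ (Proposition \ref{proplocalmin}(iv)) does not by itself imply orbital instability: a constrained saddle may well be orbitally stable, and the Grillakis--Shatah--Strauss criterion requires comparing the number of negative directions of the linearized energy with the Hessian of the action in the symmetry parameters --- a count you do not perform, and which is delicate here because several conserved quantities and a noncommuting symmetry group are in play (you rightly flag this as the main obstacle, but the obstacle is not overcome). The paper sidesteps all of this with an explicit computation: it linearizes \eqref{LLL} around $\phi_n$ in the $(c_k)$ coordinates, observes that the modes $k$ and $2n-k$ couple pairwise into $2\times 2$ systems, computes the discriminant $\Delta_{n,k}=4\beta_{n,k}^2-(\alpha_{n,k}+\alpha_{n,2n-k}-2\omega_n)^2$, and checks $\Delta_{n,n-2}>0$, exhibiting an exponentially growing mode; nonlinear instability then follows from the standard ``linear instability implies nonlinear instability'' argument cited from Grillakis--Shatah--Strauss. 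If you wish to complete (v), this direct spectral route is the shorter and safer one.
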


Numerical evidence for the above stability results can be found in \cite{BBCE}.

\begin{proof} The proofs of $(i)$, $(ii)$, $(iii)$, and $(iv)$ are variational. Indeed, assertion $(ii)$ follows from Proposition~\ref{proplocalmin}, assertion~$(iii)$ from Proposition \ref{proplocalmin2}, and assertion~$(iv)$ from Proposition \ref{prop43}. As for property $(i)$, it is a consequence of the following observation : the Hessian $\mathcal L$ of $\frac 12M-2\pi \mathcal H$ has a kernel 
spanned by $i\phi_0, \phi_1, i\phi_1$, and it satisfies
$$\mathcal L(\phi_0)=-2\phi_0\ ,\ \mathcal L(\phi _n)=(1-2^{1-n})\phi_n\ ,\ \mathcal L(i\phi_n)=(1-2^{1-n})i\phi_n, n\geq 2\ .$$
This implies the following bound, from which $(i)$ follows easily. 
\begin{lem}
If $\delta _0>0$ is small enough and 
$$\delta (u):=|M(u)-M(\phi_0)|+|\mathcal H(u)-\mathcal H(\phi_0)|\leq \delta _0\ ,$$
then
$$\inf_{(\theta ,\beta )\in \mathbb {T}\times \mathbb {C}}\Vert u-{\rm e}^{i\theta}R_\beta \phi_0\Vert_{L^2}^2\leq \delta (u)\ .$$
\end{lem}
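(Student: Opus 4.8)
The plan is to run a coercivity (Lyapunov) argument built on the spectral data for $\mathcal L$ just recorded. Set $J=\tfrac12 M-2\pi\mathcal H$, so that $\mathcal L=\mathrm{Hess}\,J(\phi_0)$ and $J(\phi_0)=\tfrac14$. Since $\phi_0=\phi_0^{\,0}$ is the $M$-stationary wave of Theorem~\ref{thm41}$(i)$ with eigenvalue $\lambda=\tfrac1{2\pi}$, one has $\Pi(|\phi_0|^2\phi_0)=\tfrac1{2\pi}\phi_0$, hence $\nabla J(\phi_0)=\phi_0-2\pi\Pi(|\phi_0|^2\phi_0)=0$, i.e. $\phi_0$ is a critical point of $J$. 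As $J$ is a polynomial of degree four in $u$, its Taylor expansion at $\phi_0$ reads
\[
J(\phi_0+w)=\tfrac14+\tfrac12\langle\mathcal L w,w\rangle_{\mathbb R}+O(\|w\|_{L^2}^3),\qquad \langle f,g\rangle_{\mathbb R}=\mathfrak{Re}\int_{\mathbb C} f\,\overline g\,dL.
\]
On the other hand, because $M$ and $\mathcal H$ are invariant under the symmetries and $J(\phi_0)=\tfrac14$, one has the cheap bound $|J(u)-\tfrac14|\le 2\pi\,\delta(u)$.

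First I would reduce to a neighbourhood of the orbit by compactness. If $\delta(u)\le\delta_0$ then $M(u),\mathcal H(u)$ are close to $M(\phi_0)=1$ and $\mathcal H(\phi_0)=\tfrac1{8\pi}$, i.e. $\|u\|_{L^4}^4$ is close to the Carlen-optimal value; by the profile decomposition of Corollary~\ref{coromin} (after the harmless rescaling normalising the mass to $\pi$), any such $u$ lies within $L^2$-distance $o(1)$ of the orbit $\{e^{i\theta}R_\beta\phi_0\}$ as $\delta_0\to0$. Hence the infimum defining the distance is attained, and I may write $e^{-i\theta}R_{-\beta}u=\phi_0+w$ for the optimal $(\theta,\beta)$, with $\|w\|_{L^2}$ small. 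Minimality forces $w$ to be real-orthogonal to the tangent space of the orbit at $\phi_0$, namely to $\mathrm{Span}_{\mathbb R}\{i\phi_0,\phi_1,i\phi_1\}=\ker\mathcal L$. Thus $w=a\phi_0+w_\perp$ with $a=\langle w,\phi_0\rangle_{\mathbb R}\in\mathbb R$ and $w_\perp\in\mathrm{Span}\{\phi_n,i\phi_n:n\ge2\}$.

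Now I would exploit the spectral gap. Since $\mathcal L$ acts diagonally in this basis and is self-adjoint, $\langle\mathcal L w,w\rangle_{\mathbb R}=-2a^2+\langle\mathcal L w_\perp,w_\perp\rangle_{\mathbb R}\ge-2a^2+\tfrac12\|w_\perp\|_{L^2}^2$, the constant $\tfrac12=1-2^{1-2}$ being the lowest positive eigenvalue. The single bad direction $a$ is controlled by the mass: expanding $M(u)=M(\phi_0+w)=1+2a+\|w\|_{L^2}^2$ gives $a=\tfrac12(M(u)-1)+O(\|w\|_{L^2}^2)$, whence $a^2\lesssim\delta(u)^2+\|w\|_{L^2}^4$ is of higher order. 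Inserting these facts together with $|J(u)-\tfrac14|\le2\pi\delta(u)$ into the Taylor expansion, and using $\|w\|_{L^2}^2=a^2+\|w_\perp\|_{L^2}^2$, yields $\tfrac14\|w\|_{L^2}^2\le C\delta(u)+C\|w\|_{L^2}^{3}+C\|w\|_{L^2}^{4}$. For $\delta_0$ (hence $\|w\|_{L^2}$) small the higher-order terms are absorbed on the left, giving $\inf_{\theta,\beta}\|u-e^{i\theta}R_\beta\phi_0\|_{L^2}^2=\|w\|_{L^2}^2\lesssim\delta(u)$, which is the asserted estimate; the precise constant is immaterial for the stability application, where $\delta(u(t))=\delta(u_0)\lesssim\|u_0-\phi_0^\alpha\|_{L^2}$ by conservation of $M,\mathcal H$ and smoothness.

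The main obstacle is the negative mode $\mathcal L\phi_0=-2\phi_0$: $\mathcal L$ is not coercive on all of $\mathcal E$, so the bound cannot follow from positivity of the Hessian alone. This negative direction is exactly the scaling/mass direction, and the heart of the argument is the Vakhitov--Kolokolov-type cancellation that uses the mass constraint to render its contribution $a^2$ higher order; that this bad mode is one-dimensional and aligned with $\phi_0$ (so that it decouples from $w_\perp$ in $\langle\mathcal L w,w\rangle_{\mathbb R}$) is precisely what the eigenstructure of $\mathcal L$ provides. A secondary delicate point is the a priori reduction to a neighbourhood of the orbit: smallness of $\delta(u)$ does not by itself bound the distance to the orbit, and one genuinely needs the compactness furnished by the profile decomposition in order to set up the modulation.
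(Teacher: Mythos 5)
Your proof is correct and follows essentially the same route as the paper's (admittedly telegraphic) argument: compactness via Corollary~\ref{coromin} to reduce to a neighbourhood of the orbit, modulation by $\mathbb{T}\times\mathbb{C}$ to kill the kernel directions $i\phi_0,\phi_1,i\phi_1$, and coercivity of $\mathcal L$ once the single negative mode along $\phi_0$ is shown to be of higher order through the mass constraint. Your write-up simply makes explicit the "details left to the reader", and the loss of an absolute constant in the final inequality is harmless for the stability application.
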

\begin{proof} By contradiction, combining Corollary \ref{coromin}, modulation by the group $\mathbb{T}\times \mathbb{C}$, and the following coercivity estimate,
$$\forall h\in \mathcal E, C^{-1}\| h\| _{L^2}^2\leq (\mathcal Lh,h)+C(h,\phi_0)^2+(h,i\phi_0)^2+(h,\phi_1)^2+(h,i\phi_1)^2\ ,$$
where $(f,g)$ denotes the real part of the inner product of $f,g \in L^2$. Details are left to the reader.
\end{proof}
Finally, the proof of $(v)$ is mostly contained in~\cite[Section 8.2]{GHT1}, but we include it here for the sake of completeness. Up to the symmetries of the equation, it suffices to consider the stationary wave 
$$
\phi_n e^{-i\omega_n t}, \quad \mbox{with} \quad \omega_n = \frac{(2n)!}{\pi (n!)^2 2^{2n+1}}.
$$
Switching to the variable, $d_k = e^{i \omega_n t} c_k$, the linearized equation reads
$$
\left\{
\begin{array}{ll}
i \partial_t d_n = \omega_n d_n + \omega_n \overline{d_n} &  \\
i \partial_t d_k = (\alpha_{n,k} - \omega_n) d_k + \beta_{n,k} \overline{d_{2n-k}} & \mbox{if $k \leq 2n$} \\
i \partial_t d_k =  (\alpha_{n,k} - \omega_n) d_k & \mbox{if $k \geq 2n + 1$},
\end{array}
\right.
$$
where $\alpha_{n,k} = \frac{(n+k)!}{\pi n! k ! 2^{n+k+1}}$ and $\beta_{n,k} = \frac{(2n)!}{\pi n! \sqrt{k ! (2n-k)!} 2^{2n+1}}$. The equation for $d_n$ gives linear growth at most (corresponding to the phase invariance), while the equation for $d_k$, with $k \geq 2n+1$ is obviously stable. Turning to the modes $\leq 2n$, $k$ and $2n-k$ are coupled. Setting $d_k=x$, it satisfies the equation
$$
\ddot{x} + i \big(\alpha_{n,k} - \alpha_{n,2n-k}\big)\dot x  -  \big(\beta_{n,k}^2 - (\alpha_{n,k} - \omega_n)(\alpha_{n,2n-k} - \omega_n)\big) x = 0.
$$
This equation has unstable (exponentially growing) modes if and only if the discriminant
$$
\Delta_{n,k} = 4\beta_{n,k}^2 - (\alpha_{n,k} + \alpha_{n,2n-k}- 2\omega_n)^2 > 0.
$$
A computation shows that $\Delta_{n,n-2} > 0$, giving the desired (linear) instability. The next step is classical: linear instability implies nonlinear instability. A proof of this can be found e.g. in   \cite[Section 6]{GSS2}.
\end{proof}

\appendix

 \section{Some explicit  $M$-stationary waves}\label{explicitstatwave}
 
 We start with stationary waves having simple zeros at $\gamma {\mathbb Z}$ for some complex number $\gamma \ne 0$.
 \begin{prop} For $\alpha \in \C$, $\alpha \neq 0$ the function
$$\chi_{\alpha}(z)=\frac{e^{\alpha z}-e^{-\alpha z}}{\sqrt{2\pi(e^{ |\alpha|^2}-e^{- |\alpha|^2})}}\, e^{-\frac{|z|^2}{2}}=\frac{\sinh(\alpha z)}{\sqrt{\pi \sinh(|\alpha |^2)}}\, e^{-\frac{|z|^2}{2}}\ ,$$
is an $M$-stationary wave in $\mathcal{E}$ which has an infinite number of zeros. It satisfies
$$
\mathcal{H}(\chi_\alpha) = \frac{1}{16\pi}, \;\;\;\;
M(\chi_{\alpha}) =1, \;\;\;\; P(\chi_{\alpha}) = |\alpha|^2 \frac{e^{|\alpha|^2}+e^{-|\alpha|^2}}{e^{|\alpha|^2}-e^{-|\alpha|^2}}, \;\;\;\; Q(\chi_{\alpha}) = 0.
$$
The corresponding solution to \eqref{LLL} is $\chi_\alpha e^{-i\lambda t}$ with $\lambda= \frac{1}{4\pi}.$
\end{prop}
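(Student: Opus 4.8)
The plan is to treat $\chi_\alpha$ as a linear combination of two coherent states and verify directly the defining relation $\lambda u = \Pi(|u|^2 u)$ for $M$-stationary waves, reading off all the conserved quantities along the way. Write $\chi_\alpha = \tfrac{c}{2}(g_\alpha - g_{-\alpha})$ with $g_\beta(z) = e^{\beta z - |z|^2/2}$ and $c = (\pi\sinh|\alpha|^2)^{-1/2}>0$. Membership in $\widetilde{\mathcal{E}}$ is immediate since $\sinh(\alpha z)$ is entire, and genuine membership in $\mathcal{E}$ (that is, $L^2$) will drop out of the mass computation below.

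The heart of the argument is the computation of $\Pi(|\chi_\alpha|^2\chi_\alpha)$. Using the kernel formula \eqref{defpi} and $c\in\R$, this equals $\frac{c^3}{8\pi}e^{-|z|^2/2}\int_\C e^{-2|w|^2 + \overline w z}(e^{\alpha w} - e^{-\alpha w})^2(e^{\overline\alpha\overline w} - e^{-\overline\alpha\overline w})\,dL(w)$. I would expand the integrand into six exponential terms $e^{pw + q\overline w}$ and evaluate each with the Gaussian identity \eqref{gaussian}, taking there $a = p$ and $b = q + z$ (the factor $e^{\overline w z}$ contributing to the coefficient of $\overline w$). The two terms with $p = 0$ cancel, and the remaining four reassemble into $(e^{|\alpha|^2} - e^{-|\alpha|^2})\sinh(\alpha z) = 2\sinh(|\alpha|^2)\sinh(\alpha z)$; hence $\Pi(|\chi_\alpha|^2\chi_\alpha) = \frac{c^2}{4}\sinh(|\alpha|^2)\,\chi_\alpha$, which is exactly the stationary relation with $\lambda = \frac{c^2}{4}\sinh(|\alpha|^2) = \frac{1}{4\pi}$ after inserting the value of $c$. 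This single identity carries essentially all the content of the proposition; the only real care needed is the bookkeeping of the six cross-terms and the cancellation of the two constant ones, which is the step I expect to be most error-prone.

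For the conserved quantities I would proceed as follows. The mass is $M(\chi_\alpha) = c^2\int_\C|\sinh(\alpha z)|^2 e^{-|z|^2}\,dL$, which, expanding the square and using the elementary identity $\int_\C e^{-|w|^2 + aw + c\overline w}\,dL = \pi e^{ac}$, equals $c^2\pi\sinh(|\alpha|^2) = 1$; this also confirms $\chi_\alpha \in L^2$. The energy then comes for free: pairing the stationary relation with $\chi_\alpha$ and using that $\Pi$ is a self-adjoint projection fixing $\mathcal{E}$ gives $4\mathcal{H}(\chi_\alpha) = \langle\Pi(|\chi_\alpha|^2\chi_\alpha),\chi_\alpha\rangle_{L^2} = \lambda M(\chi_\alpha) = \frac{1}{4\pi}$, so $\mathcal{H}(\chi_\alpha) = \frac{1}{16\pi}$. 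For $P$ and $Q$ I would expand in the basis $(\varphi_n)$: only odd modes survive, with $c_{2k+1} = c\sqrt{\pi}\,\alpha^{2k+1}/\sqrt{(2k+1)!}$. Then $P(\chi_\alpha) = \sum_n n|c_n|^2 = c^2\pi\sum_{k\geq0}(2k+1)|\alpha|^{2(2k+1)}/(2k+1)!$, and summing the series $\sum_{k\geq0}(2k+1)x^{2k+1}/(2k+1)! = x\cosh x$ at $x = |\alpha|^2$ yields $P = |\alpha|^2\coth(|\alpha|^2)$, the stated value. Since the nonzero coefficients sit only at odd indices, no two consecutive $c_n$ are both nonzero, so $Q(\chi_\alpha) = \sum_n\sqrt{n+1}\,c_n\overline{c_{n+1}} = 0$ (equivalently, $|\chi_\alpha|^2$ is even and $\int_\C z|\chi_\alpha|^2\,dL$ vanishes under $z\mapsto -z$).

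Finally, the zeros of $\chi_\alpha$ are exactly those of $\sinh(\alpha z)$, namely $z \in \frac{i\pi}{\alpha}\Z$, an infinite set of simple zeros, which settles the qualitative claim; and $u(t,z) = e^{-i\lambda t}\chi_\alpha(z)$ solves \eqref{LLL} because the cubic nonlinearity absorbs the phase, $|u|^2 u = e^{-i\lambda t}|\chi_\alpha|^2\chi_\alpha$, reducing the evolution equation to the stationary identity already established with $\lambda = \frac{1}{4\pi}$. Apart from the Gaussian integral of the second paragraph, every remaining step is a routine series or symmetry computation.
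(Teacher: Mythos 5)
Your proposal is correct and follows essentially the same route as the paper: write $\chi_\alpha$ as a combination of the coherent states $e^{\pm\alpha z-|z|^2/2}$, expand $|\chi_\alpha|^2\chi_\alpha$ into six exponentials, evaluate each with the Gaussian identity \eqref{gaussian} (the two $w$-independent cross terms cancelling), and read off $\lambda$, $M$, $\mathcal H=\lambda M/4$ and $Q=0$ by parity. The only cosmetic difference is that you compute $P$ by summing the series $\sum n|c_n|^2$ in the Hermite basis, where the paper differentiates a Gaussian integral; both give $|\alpha|^2\coth(|\alpha|^2)$.
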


\begin{proof} Set
 $$\chi_{\alpha}(z)=A(e^{\alpha z}-e^{-\alpha z}) e^{-\frac{|z|^2}{2}}=\sqrt{\pi}e^{\frac{|\alpha |^2}{2}}A(\phi_0^{\alpha}-\phi_0^{-\alpha})(z),$$
 where $A>0$ is such that $M(\chi_{\alpha}) =1$. Then,  by \eqref{defpi} and \eqref{gaussian}, 
 \begin{multline*}
 \Pi\big[|\chi_\alpha|^2 \chi_\alpha\big](z)\\
   \begin{aligned}
& =A^3  \Pi\Big[e^{-\frac{3|z|^2}{2}}(e^{2 \alpha z+\ov \alpha \ov{z}}-e^{2 \alpha z-\ov \alpha \ov{z}}+e^{-2 \alpha z+\ov \alpha \ov{z}}-e^{-2 \alpha z-\ov \alpha \ov{z}}-2e^{ \ov \alpha \ov z}+2e^{-\ov \alpha \ov{z}})\Big]\\
 &=\frac{A^3}{\pi}e^{-\frac{|z|^2}{2}} \int_{\C} \Big[e^{-{2|w|^2}+\ov{w}z}(e^{2 \alpha w+\ov \alpha \ov{w}}-e^{2 \alpha w-\ov \alpha \ov{w}}+e^{-2 \alpha w+\ov \alpha \ov{w}}-e^{-2 \alpha w-\ov \alpha \ov{w}}-2e^{ \ov \alpha \ov w}+2e^{-\ov \alpha \ov{w}})\Big]dL(w)\\
 &=\frac{A^3}{2} (e^{\alpha z+|\alpha |^2}-e^{\alpha z-|\alpha |^2}+e^{-\alpha z-|\alpha |^2}-e^{-\alpha z+|\alpha |^2})e^{-\frac{|z|^2}{2}}\\
  &=\frac{A^3}{2} (e^{|\alpha |^2}-e^{-|\alpha |^2})(e^{\alpha z}-e^{-\alpha z})e^{-\frac{|z|^2}{2}} ,
 \end{aligned}
  \end{multline*}
  which shows that $\chi_{\alpha}$ is a  $M$-stationary wave with $\lambda =\frac{1}{2}A^2(e^{|\alpha |^2}-e^{-|\alpha |^2})$ and from the previous lines we have $\mathcal{H}(\chi_\alpha) = \frac{1}{4} \lambda$.  Set 
 $$v_{\alpha}=\phi_0^{\alpha}-\phi_0^{-\alpha}.$$
By \eqref{gaussian} we have
 \begin{eqnarray*}
M(v_\alpha)&=&\int |\phi_0^{\alpha}|^2+\int |\phi_0^{-\alpha}|^2-2 \Re \int \phi_0^{\alpha}  \ov{\phi_0^{-\alpha}} \\
&=& 2-\frac2{\pi}\Re \int  e^{-{|z|^2+\alpha z -\ov \alpha \ov{z}}-|\alpha |^2}=2(1-e^{-2 |\alpha |^2}),\\
\end{eqnarray*}
which gives the values $A=\big[2\pi(e^{ |\alpha|^2}-e^{- |\alpha|^2})\big]^{-1/2}$, $\lambda=1/(4\pi)$ and $\mathcal{H}(\chi_\alpha) =1/(16\pi)$. Next, 
\begin{align*}
\int |z|^2 |v_\alpha|^2&=\int  |z|^2|\phi_0^{\alpha}|^2+\int  |z|^2 |\phi_0^{-\alpha}|^2-2 \Re \int  |z|^2\phi_0^{\alpha}  \ov{\phi_0^{-\alpha}} \\
&= 2|\alpha|^2+2-\frac{2}\pi\Re \int  |z|^2 e^{-{|z|^2+\alpha z -\ov \alpha \ov{z}}-|\alpha |^2}\\
& = \left.  2|\alpha |^2+2-2e^{-|\alpha |^2}\partial_A \partial_B   e^{AB} \right|_{\substack{A=\alpha \\ B =-\alpha  }}\\
 & =2|\alpha |^2+2-2(1-|\alpha |^2)e^{-2|\alpha |^2},
\end{align*}
thus  $P(v_\alpha)=2|\alpha |^2(1+e^{-2 |\alpha |^2})$.

Finally, $Q(\chi_\alpha )=0$ follows from $|\chi_\alpha (-z)|=|\chi_\alpha (z)|$.
\end{proof}
Our second example provides stationary waves having zeros located on $\gamma {\mathbb Z} \cup \frac{i\pi }{k\ov \gamma}{\mathbb Z}$ for some $\gamma \ne 0$ and for some integer $k\ne 0$. 

\begin{prop} For $k \in \Z$ and $\alpha \in \C$ with $k,\alpha \neq 0$,  the function
$$v_{k}(z)=\frac{ \sinh(\alpha z) \sin\left (\frac{k\pi z}{\ov \alpha }\right )}{\sqrt{\pi \sinh(|\alpha |^2)\sinh\left (\frac{k^2\pi^2}{|\alpha |^2}\right )}}\, e^{-\frac{|z|^2}{2}},$$
  is an $M$-stationary wave in $\mathcal{E}$ which has an infinite number of zeros. It satisfies
$$
\mathcal{H}(v_k) = \frac{1}{32\pi}, \;\;\;\;
M(v_k) =1, \;\;\;\; Q(v_k) = 0.
$$
$$
P(v_k)=
\frac{(|\alpha|^2+\frac{\pi^2 k^2}{|\alpha|^2})(e^{|\alpha|^2+ \frac{\pi^2 k^2}{|\alpha|^2} }-e^{-|\alpha|^2- \frac{\pi^2 k^2}{|\alpha|^2} })+(|\alpha|^2-\frac{\pi^2 k^2}{|\alpha|^2})(e^{-|\alpha|^2+ \frac{\pi^2 k^2}{|\alpha|^2} }-e^{|\alpha|^2- \frac{\pi^2 k^2}{|\alpha|^2} }) }{(e^{ |\alpha|^2}-e^{- |\alpha|^2})(e^{ \frac{\pi^2 k^2}{|\alpha|^2}   }-e^{-  \frac{\pi^2 k^2}{|\alpha|^2} })}.
$$
The corresponding solution to \eqref{LLL} is $v_k e^{-i\lambda t}$ with $\lambda= \frac{1}{8\pi}.$
\end{prop}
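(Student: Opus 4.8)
The proof will follow the template of the previous proposition, evaluating $\Pi(|v_k|^2 v_k)$ directly by reduction to Gaussian integrals. First I would write $v_k = A\,g(z)\,e^{-|z|^2/2}$ with $g(z)=\sinh(\alpha z)\sin(k\pi z/\overline\alpha)$ and $A>0$ the normalizing constant, and expand $g$ into four elementary exponentials. Setting $\sigma = ik\pi/\overline\alpha$, one has
$$
g(z)=\frac1{4i}\sum_{j=1}^4 c_j\,e^{\zeta_j z},\quad \zeta_1=\alpha+\sigma,\ \zeta_2=\alpha-\sigma,\ \zeta_3=-\alpha+\sigma=-\zeta_2,\ \zeta_4=-\alpha-\sigma=-\zeta_1,
$$
with $(c_1,c_2,c_3,c_4)=(1,-1,-1,1)$. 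Thus $v_k$ is a linear combination of the coherent states $e^{\zeta_j z-|z|^2/2}$, and $\Pi(|v_k|^2 v_k)$ can be computed term by term via the Gaussian identity \eqref{gaussian}, exactly as in the evaluation of $\Pi(|\chi_\alpha|^2\chi_\alpha)$.

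The arithmetic heart of the argument, and the place where the integrality of $k$ is essential, is that $\alpha\overline\sigma=-ik\pi$ and $\overline\alpha\sigma=ik\pi$ are purely imaginary integer multiples of $\pi$. Consequently every cross exponential that arises collapses to a real sign: $e^{\pm\alpha\overline\sigma}=e^{\mp ik\pi}=(-1)^k$ and $e^{\pm 2ik\pi}=1$; moreover $\Re(\alpha\overline\sigma)=0$ forces $|\zeta_1|^2=|\zeta_2|^2=|\alpha|^2+|\sigma|^2$. Writing $S=|\alpha|^2$ and $T=|\sigma|^2=k^2\pi^2/|\alpha|^2$, applying \eqref{gaussian} produces a sum over frequencies $e^{((\zeta_j+\zeta_\ell)/2)z}$. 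I would then check that the only surviving $z$-frequencies are the four $\zeta_n$: the spurious frequencies $\pm\alpha$, $\pm\sigma$ and $0$ each carry a coefficient proportional to $\sum_m c_m e^{(\cdots)\overline\zeta_m}$ or to $\sum_m c_m=0$, all of which vanish once the cross exponentials are replaced by $(-1)^k$. For the genuine frequencies the key identity is $\sum_m c_m e^{\zeta_n\overline\zeta_m}=4c_n\sinh S\,\sinh T$ (using $e^{\pm 2ik\pi}=1$ and $\cosh(S+T)-\cosh(S-T)=2\sinh S\sinh T$), so the whole expression reassembles into $\tfrac18 A^2\sinh S\,\sinh T\,v_k$, giving $\lambda=\tfrac18 A^2\sinh S\,\sinh T$.

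The remaining quantities come from the same machinery. Using $\tfrac1\pi\int e^{-|w|^2+aw+c\overline w}\,dL=e^{ac}$ together with the identity above gives $M(v_k)=A^2\pi\sinh S\,\sinh T$, so $M(v_k)=1$ fixes $A^2=(\pi\sinh S\,\sinh T)^{-1}$ and hence $\lambda=\tfrac1{8\pi}$. Pairing $\lambda v_k=\Pi(|v_k|^2v_k)$ with $v_k$ in $L^2$ yields $4\mathcal H(v_k)=\lambda M(v_k)$, whence $\mathcal H(v_k)=\tfrac1{32\pi}$. The identity $Q(v_k)=0$ is immediate from the evenness of $g$, hence of $|v_k|$, which makes the integrand of $\int z|v_k|^2$ odd. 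For $P(v_k)=\int|z|^2|v_k|^2\,dL-M(v_k)$ I would differentiate the Gaussian integral, using $\tfrac1\pi\int|z|^2 e^{-|z|^2+az+c\overline z}\,dL=(1+ac)e^{ac}$; grouping the sixteen resulting terms by the real part of $\zeta_j\overline\zeta_m$ (one of $\pm(S+T)$, $\pm(S-T)$), the imaginary parts cancel pairwise because $k\in\Z$, leaving $\int|z|^2|v_k|^2=1+\dfrac{(S+T)\sinh(S+T)-(S-T)\sinh(S-T)}{2\sinh S\,\sinh T}$, which is precisely the announced value of $P(v_k)$. Finally $v_k$ has infinitely many zeros, located on $\{\sinh(\alpha z)=0\}\cup\{\sin(k\pi z/\overline\alpha)=0\}=\tfrac{i\pi}{\alpha}\Z\cup\tfrac{\overline\alpha}{k}\Z$.

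The only genuine obstacle is the bookkeeping: $\Pi(|v_k|^2 v_k)$ and the second moment $\int|z|^2|v_k|^2$ are sums of $64$ and $16$ Gaussian integrals respectively, and the task is to organize them so that the cancellations are transparent. The structural facts $\zeta_3=-\zeta_2$, $\zeta_4=-\zeta_1$ and $\sum_m c_m e^{\zeta_n\overline\zeta_m}=4c_n\sinh S\,\sinh T$ keep this manageable, and the hypothesis $k\in\Z$ is invoked repeatedly — to kill the spurious frequencies, to make $\lambda$ real, and to cancel the imaginary contributions to $P$.
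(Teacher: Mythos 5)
Your proposal is correct and follows essentially the same route as the paper: both reduce everything to the Gaussian identity \eqref{gaussian} applied to the four coherent-state exponentials making up $v_k$, with the integrality of $k$ turning all cross exponentials into signs. The only difference is bookkeeping — the paper first records the intermediate identity for $\Pi\big[\theta_{k_1}\theta_{k_2}\ov{\theta_{k_3}}\big]$ with $\theta_k=(e^{\alpha z}-e^{-\alpha z})e^{i\pi k z/\ov\alpha}e^{-|z|^2/2}$ and expands $|v_k|^2v_k$ into six such terms, whereas you expand fully and sort by output frequency; your key cancellations ($\sum_m c_m e^{\zeta_n\ov{\zeta_m}}=4c_n\sinh S\sinh T$, vanishing of the spurious frequencies, and the resulting values of $\lambda$, $M$, $\mathcal H$, $P$) all check out.
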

\begin{proof}
 Set $\theta_{k}(z):= (e^{ \alpha z}-e^{-\alpha z}) e^{  \frac{i\pi k}{\ov \alpha} z}e^{-\frac{|z|^2}{2}}$.
First of all we show, using \eqref{defpi} and \eqref{gaussian}, that, for all $k_1,k_2,k_3 \in \Z$ such that $k_1,k_2$ have the same parity, 
$$\Pi\big[\theta_{k_1}\theta_{k_2}\ov{\theta_{k_3}}\big]=\frac12 (e^{|\alpha|^2}-e^{-{|\alpha|^2}})(-1)^{k_3+\frac{k_1+k_2}2} e^{\frac{\pi^2(k_1+k_2)k_3}{2|\alpha|^2}}\theta_{\frac12(k_1+k_2)}.$$
Then write $$v_k=-iA (e^{  \alpha z}-e^{- \alpha z}) (e^{  \frac{i\pi k}{\ov{\alpha}} z}-e^{-  \frac{i\pi k}{\ov{\alpha}} z})e^{-\frac{|z|^2}{2}}=-iA(\theta _k-\theta _{-k})\ ,$$ with $A>0$ such that $M(v_k)=1$. We obtain, from the above identity,
 \begin{eqnarray*}
 \Pi\big[|v_k|^2 v_k\big]&=&   -iA^3\Pi\big[\theta_k^2 \ov{\theta_k} -\theta_k^2 \ov{\theta_{-k}}+\theta_{-k}^2 \ov{\theta_{k}}-\theta_{-k}^2 \ov{\theta_{-k}}-2\theta_k \theta_{-k}\ov{\theta_k}+2\theta_{-k} \theta_{k}\ov{\theta_{-k}}\big]\\
 &=&\frac {A^2}2(e^{ |\alpha|^2}-e^{- |\alpha|^2})(e^{ \frac{\pi^2 k^2}{|\alpha|^2}   }-e^{-  \frac{\pi^2 k^2}{|\alpha|^2} })v_k.
  \end{eqnarray*}
Therefore $\lambda=\frac {A^2}2(e^{ |\alpha|^2}-e^{- |\alpha|^2})(e^{ \frac{\pi^2 k^2}{|\alpha|^2}   }-e^{-  \frac{\pi^2 k^2}{|\alpha|^2} })$ and $\mathcal{H}(v_k) = \frac{\lambda}{4}$. Then we compute
$$M(v_k)=4\pi A^2(e^{ |\alpha|^2}-e^{- |\alpha|^2})(e^{ \frac{\pi^2 k^2}{|\alpha|^2}   }-e^{-  \frac{\pi^2 k^2}{|\alpha|^2} }) =1,$$
which provides the value of $A$. Finally, with a repeated use of the formula
$$ \frac1{\pi}\int_\C (|w|^2-1)e^{-|w|^2+Aw+B \ov{w}}dL(w)=AB e^{AB}$$
we get
\begin{multline*}
P(v_k)=\\
=4 \pi A^2 \left [\left (|\alpha|^2+\frac{\pi^2 k^2}{|\alpha|^2}\right )\left (e^{|\alpha|^2+ \frac{\pi^2 k^2}{|\alpha|^2} }-e^{-|\alpha|^2- \frac{\pi^2 k^2}{|\alpha|^2} }\right )+\left (|\alpha|^2-\frac{\pi^2 k^2}{|\alpha|^2}\right )\left (e^{-|\alpha|^2+ \frac{\pi^2 k^2}{|\alpha|^2} }-e^{|\alpha|^2- \frac{\pi^2 k^2}{|\alpha|^2} }\right ) \right ].
\end{multline*}
\end{proof}

 \section{The dictionary}\label{dictionary}

For $f \in \mathscr{S}'(\R)$, we define the Bargmann transform $B$ by
$$(Bf)(z)=\frac1{\pi^{3/4}} e^{\frac{z^2}2} \int_{\R} e^{-\frac{(\sqrt{2}z-y)^2}2}f(y)dy,\quad z \in \C.$$
Then 
$$(B^{\star}u)(y)=\frac1{\pi^{3/4}} \int_{\C} e^{\frac{\ov{w}^2}2}  e^{-\frac{(\sqrt{2}\ov{w}-y)^2}2} e^{-\frac{|w|^2}2}u(w)dL(w),\quad y \in \R,$$
and a direct computation gives $B B^{\star}=e^{|z|^2/2}\Pi$ (see \cite{ABN} for more details on the Bargmann transform.) \medskip

In the following tabular, for each stationary wave $u$, we list the corresponding  coordinates $(c_k)$ such that $u=\sum_{k\geq 0} c_k \phi_k$, and  $f=B^{\star}u$. \medskip\medskip

\begin{tabular}{| c | c | c  |}
 \hline
 $u$ &  $c_k$ & $f$  \\[6pt]
 \hline
  \rule[-0.9cm]{0cm}{1.6cm}
$\displaystyle \varphi_0(z) = \frac{1}{\sqrt \pi} e^{-\frac{|z|^2}{2}}$ & $\displaystyle \delta_{0,k}$ & $\displaystyle \frac{1}{\pi^{1/4}} \displaystyle e^{-\frac{y^2}{2}}$  \\ 
 \hline
   \rule[-0.9cm]{0cm}{1.6cm}
$\displaystyle \varphi_0^\alpha(z) =  \frac{1}{\sqrt \pi} e^{-\frac{|z|^2}{2}-\frac{|\alpha|^2}{2}+\alpha z}$ & $\displaystyle \frac{\alpha^k}{\sqrt{k !}} e^{- \frac{|\alpha|^2}{2}}$ & $\displaystyle \frac{1}{\pi^{1/4}} e^{i \alpha_I (\sqrt{2} y - \alpha_R) - (\frac{y}{\sqrt 2} - \alpha_R)^2}$ \\[4pt]
 \hline
 \rule[-0.9cm]{0cm}{1.6cm}
$\displaystyle \varphi_n(z) = \frac{1}{\sqrt{ \pi n!}} z^n e^{-\frac{|z|^2}{2}}$ & $\displaystyle \delta_{n,k}$ & $\displaystyle \frac{1}{\pi^{1/4}2^{n/2} \sqrt{n!}} H_n(y)$ \\[6pt]
 \hline
 \rule[-0.9cm]{0cm}{1.6cm}
$\displaystyle e^{-\frac{|z|^2}{2} + \frac{z^2}{2}}$ & $\displaystyle \frac{\sqrt{\pi k!}}{2^{k/2} (k/2)!} \mathbf{1}_{k\; \mbox{even}}$ & $\displaystyle \frac{\pi^{1/4}}{\sqrt{2}}$ \\[6pt]
 \hline  
 \rule[-0.9cm]{0cm}{1.6cm}
$\displaystyle z e^{-\frac{|z|^2}{2} + \frac{z^2}{2}}$ & \;$\displaystyle \frac{\sqrt{\pi k!}}{2^{(k-1)/2} ((k-1)/2)!} \mathbf{1}_{k\; \mbox{odd}}$ \; & $\displaystyle \frac{\pi^{1/4}y}{2\sqrt 2}$  \\[6pt]
 \hline
\end{tabular}
\medskip

where
$$
\alpha = \alpha_R + i \alpha_I \qquad \mbox{and} \qquad H_n(y) := (-1)^n e^{\frac{y^2}{2}} (\partial_y)^n e^{-y^2}.
$$

\section{Sobolev spaces}  

Define the harmonic Sobolev spaces for $s \in \mathbb{R}$,   by 
\begin{equation*} 
\HH^{s}(\C) = \big\{ u\in \mathscr{S}'(\C),\; {H}^{s/2}u\in L^2(\C)\big\}.
\end{equation*}

This is a weighted Sobolev norm. In the Bargmann-Fock space, this norm simply corresponds to the weighted $L^{2,s}$-norm. In other words, regularity exactly corresponds to decay in the space variable. 

Precisely, setting $\<z\>=(1+|z|^2)^{1/2}$, we have the following result.
\begin{lem}\label{lemEq}
Let $s\in \R$. There exists $C>0$ such that for all  $u\in \tilde{\mathcal{E}}\cap \HH^s(\C)$
\begin{equation*}
\frac1C\|\<z\>^s u\|_{L^2(\C)} \leq  \|u\|_{\HH^s(\C)} \leq C\|\<z\>^s u\|_{L^2(\C)}.
\end{equation*}
\end{lem}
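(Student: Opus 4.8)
The plan is to diagonalize both norms in the orthonormal basis $(\varphi_n)$ and reduce the claimed equivalence to a single one-dimensional asymptotic estimate. Writing $u = \sum_{n=0}^{+\infty} c_n \varphi_n \in \tilde{\mathcal E}$, the relation $H\varphi_n = 2(n+1)\varphi_n$ (valid on $\mathcal E$, hence on $\tilde{\mathcal E}$, where $H = 2(\Lambda+1)$) shows that $H^{s/2}$ acts diagonally with positive eigenvalues, so that
$$
\|u\|_{\HH^s(\C)}^2 = \|H^{s/2}u\|_{L^2(\C)}^2 = \sum_{n=0}^{+\infty}\bigl(2(n+1)\bigr)^s|c_n|^2 .
$$

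For the weighted side, I would pass to polar coordinates $z = re^{i\theta}$ and use the angular orthogonality $\int_0^{2\pi} e^{i(n-m)\theta}\,d\theta = 2\pi\delta_{nm}$, which annihilates all cross terms in $|u|^2$. After the substitution $t = r^2$ this produces the clean diagonal formula
$$
\|\langle z\rangle^s u\|_{L^2(\C)}^2 = \int_\C(1+|z|^2)^s|u|^2\,dL = \sum_{n=0}^{+\infty} I_n(s)\,|c_n|^2, \qquad I_n(s) := \frac{1}{n!}\int_0^{+\infty}(1+t)^s\, t^n e^{-t}\,dt .
$$
Since $\bigl(2(n+1)\bigr)^s \asymp \langle n\rangle^s$, the lemma is thereby equivalent to the uniform two-sided bound $I_n(s)\asymp \langle n\rangle^s$.

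To establish this, I would read $I_n(s) = \mathbb{E}\bigl[(1+T_n)^s\bigr]$, where $T_n$ has the Gamma density $t^n e^{-t}/n!$ on $(0,+\infty)$, with mean $n+1$ and variance $n+1$, and exploit its concentration on $[n/2,2n]$. For the lower bound, restricting the integral to $[n/2,2n]$ — where $(1+t)^s$ is comparable to $\langle n\rangle^s$ regardless of the sign of $s$ — and using that this interval carries a fixed positive fraction of the Gamma mass yields $I_n(s)\gtrsim \langle n\rangle^s$. For the upper bound when $s\ge 0$, the elementary inequality $(1+t)^s\lesssim_s 1 + t^s$ reduces matters to the ratio $\Gamma(n+1+s)/\Gamma(n+1)\sim (n+1)^s$ coming from Stirling's formula. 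When $s<0$ I would split at $t=n/2$: on $[n/2,+\infty)$ one has $(1+t)^s\le (1+n/2)^s\lesssim \langle n\rangle^s$, while on $[0,n/2]$ the bounded factor $(1+t)^s\le 1$ is beaten by the large-deviation estimate $\frac{1}{n!}\int_0^{n/2} t^n e^{-t}\,dt\lesssim e^{-cn}\lesssim_s \langle n\rangle^s$. The finitely many small values of $n$ are handled trivially, since there $I_n(s)$ is a fixed positive constant, comparable to $\langle n\rangle^s$.

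The routine computations — the angular integration and the change of variables $t=r^2$ — are immediate, and the first displayed identity is purely formal. The only genuinely delicate point is the uniform control of $I_n(s)$, and within it the upper bound for $s<0$: one must verify that the non-decaying part of $(1+t)^s$ near $t=0$ contributes negligibly, which rests on the exponential smallness of the Gamma density there. I expect this large-deviation step to be the main obstacle, although it is entirely standard.
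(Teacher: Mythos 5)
Your proof is correct and follows essentially the same route as the paper: diagonalize both norms in the basis $(\varphi_n)$ via angular orthogonality and the substitution $t=r^2$, then reduce to the two-sided estimate $\frac{1}{n!}\int_0^{+\infty}(1+t)^s t^n e^{-t}\,dt \asymp (n+1)^s$. The only difference is that the paper disposes of this last estimate by invoking Stirling's formula for the integral, whereas you justify it more carefully through the concentration of the Gamma$(n+1)$ distribution; this fills in a detail the paper leaves implicit.
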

 \medskip

\begin{proof}
Write $u=\sum_{n\geq 0}c_n \phi_n$. On the one hand, we have  $H^su=\sum_{n\geq 0}2^s(n+1)^sc_n \phi_n$, therefore
\begin{equation}\label{soboL}
 \|u\|^2_{\HH ^s(\C)}=\int_{\C} \ov{u} H^su\,dL(z)=\sum_{n\geq 0}2^s(n+1)^s|c_n|^2.
\end{equation}
On the other hand, 
\begin{eqnarray*}
\|\<z\>^s u\|^2_{L^2(\C)}&=& \int_{\C} \<z\>^{2s} |u(z)|^2 dL(z)\\
&=& \sum_{n,m\geq 0} \int_{\C}\<z\>^{2s} c_n \ov{c_m} \phi_n(z) \ov{\phi_m(z) } dL(z)\\
&=&\frac1\pi \sum_{n,m\geq 0} \int_{\C}\<z\>^{2s} \frac{c_n \ov{c_m}}{\sqrt{n! m!}} z^n\ov{z}^m \e^{-|z|^2}  dL(z).
\end{eqnarray*}
Now, we make the polar change of variables $z=r\e^{i\theta}$ and use that $\int_0^{2\pi} \e^{i(n-m)\theta}d\theta =2\pi \delta_{n,m}$,
\begin{equation*}
\|\<z\>^s u\|^2_{L^2(\C)}= 2  \sum_{n\geq 0}  \frac{|c_n|^2}{n!}\int_{0}^{+\infty}\<r\>^{2s} r^{2n+1}  \e^{-r^2}  dr.
\end{equation*}
With the change of variables $t=r^2$ we get
\begin{equation}\label{soboK}
\|\<z\>^s u\|^2_{L^2(\C)}=   \sum_{n\geq 0}  \frac{|c_n|^2}{n!}\int_{0}^{+\infty}(1+t)^{s} t^{n}  \e^{-t}  dt.
\end{equation}
Finally, we use the Stirling formula twice ($n\geq 1$)
\begin{equation*}
\frac1c \big(\frac{n}{\e}\big)^{n}\sqrt{n} \leq n! \leq c \big(\frac{n}{\e}\big)^{n}\sqrt{n}, \qquad \frac1cn^{n+s}\e^{-n}\sqrt{n} \leq \int_{0}^{+\infty}(1+t)^{s} t^{n}  \e^{-t}  dt \leq cn^{n+s}\e^{-n}\sqrt{n} ,
\end{equation*}
and conclude with \eqref{soboK} that 
\begin{equation*} 
\frac1C \sum_{n\geq 0}(n+1)^s|c_n|^2 \leq \|\<z\>^s u\|^2_{L^2(\C)} \leq C \sum_{n\geq 0} (n+1)^s|c_n|^2,
\end{equation*}
which completes the proof thanks to \eqref{soboL}.
\end{proof}

\end{document}